\newtheorem{lem}{Lemma}
\newtheorem{thm}[lem]{Theorem}
\newtheorem*{thm*}{Theorem}
\newtheorem*{thmzp*}{$\mathbf{Z^*_p}$-theorem}
\newtheorem*{conjze*}{$\mathbf{Z^*_e}$-conjecture}
\theoremstyle{definition}
\newtheorem{hyp}[lem]{Assumption}
\newcommand\catname[1]{\mathbf{#1}}
\newcommand\catalg{\catname{Alg}}
\newcommand\catmod[1]{{\null_{#1}\catname{Mod}}}
\newcommand\catf{\catname{F}}
\newcommand\catbr{\catname{Br}}
\newcommand\catfr{\catname{Fr}}
\DeclareMathOperator{\Autom}{Aut}
\DeclareMathOperator{\Endom}{End}
\DeclareMathOperator{\Res}{Res}
\DeclareMathOperator{\Inf}{Inf}
\DeclareMathOperator{\Defres}{Defres}
\DeclareMathOperator{\Ind}{Ind}
\DeclareMathOperator{\Br}{Br}
\DeclareMathOperator{\br}{br}
\DeclareMathOperator{\Tr}{Tr}
\newcommand\longisom{\tilde{\,\longrightarrow}\,}
\newcommand\op{\mathrm{op}}
\newcommand\bop{\mathrm{o}}
\newcommand\ringO{\mathcal O}
\newcommand\dade{\mathcal D}
\newcommand\sla[1]{\!\left<#1\right>}
\title{Strong fusion control and stable equivalences}
\author{Erwan Biland%
\footnote{Permanent e-mail adress: \texttt{eb@erwanbiland.fr}}}
\begin{document}

\maketitle

\begin{abstract}
This article is dedicated to the proof of the following theorem.
Let $G$ be a finite group, $p$ be a prime number, and $e$ be a $p$-block of $G$. Assume that the centraliser $C_G(P)$ of an $e$-subpair $(P,e_P)$ ``strongly'' controls the fusion of the block $e$, and that a defect group of $e$ is either abelian or (for odd~$p$) has a non-cyclic center. Then there exists a stable equivalence of Morita type between the block algebras $\ringO Ge$ and $\ringO C_G(P)e_P$, where $\ringO$ is a complete discrete valuation ring of residual characteristic $p$. This stable equivalence is constructed by gluing together a family of local Morita equivalences, which are induced by bimodules with fusion-stable endo-permutation sources. 

Broué had previously obtained a similar result for principal blocks, in relation with the search for a modular proof of the odd $Z^*_p$-theorem. Thus our theorem points towards a block-theoretic analogue of the $Z^*_p$-theorem, which we state in terms of fusion control and Morita equivalences. 
\end{abstract}

\tableofcontents

\newpage

\section*{Introduction}

Let $G$ be a finite group, $p$ be a prime number, and $P$ be a $p$-subgroup of $G$. Assume that the centraliser $C_G(P)$ controls the $p$-fusion in $G$, \emph{i.e.}, the subgroup $C_G(P)$ contains a Sylow $p$-subgroup $D$ of $G$ and $N_G(Q)\leqslant C_G(Q)C_G(P)$ for any subgroup $Q$ of $D$. Then the famous $Z^*_p$-theorem asserts that the group $G$ admits the factorisation $G=O_{p'}(G) C_G(P)$, where $O_{p'}(G)$ is the largest normal subgroup of $G$ with order coprime to $p$. This theorem has been proven originally for $p=2$ by Glauberman \cite{Glauberman1966}, and later deduced, for $p$ odd, from the classification of finite simple groups (\cite[Theorem 1]{Artemovich}, \cite[Remark 7.8.3]{Classification3}).

Let $\ringO$ be a complete discrete valuation ring with algebraically closed residue field $k$ of characteristic $p$. It is well-known, and elementary, that the factorisation $G=O_{p'}(G)\, C_G(P)$ is satisfied if, and only if, the restriction functor $\Res^{\,G}_{C_G(P)}$ induces a Morita equivalence between the principal blocks of the groups $G$ and $C_G(P)$ over the ring $\ringO$. Therefore, the $Z^*_p$ theorem can be stated in terms of Morita equivalences, and one should expect it to admit a ``modular proof'', relying on local representation theory. Such a proof is known for $p=2$ but, as of today, not for odd~$p$.

The investigation of a putative minimal counter-example to the $Z^*_p$-theorem leads to a finite group $G$ and a $p$-subgroup $P$ such that the centraliser $C_G(P)$ ``strongly'' controls the $p$-fusion in $G$, \emph{i.e.}, $C_G(P)$ contains a Sylow $p$-subgroup $D$ of $G$, and $N_G(Q) \leqslant O_{p'}(C_G(Q))C_G(P)$ for any non-trivial $p$-subgroup $Q$ of $D$. In this context, Brou\'e has proven that the restriction functor $\Res^{\,G}_{C_G(P)}$ induces a stable equivalence between the principal blocks of the groups $G$ and $C_G(P)$. His proof relies on the following statement, which appears in \cite[Theorem 5.6]{Rouquier2001}: for any two finite groups $G$ and $H$ with the same local structure, a $p$-permutation bimodule that induces Morita equivalences between the principal block algebras of the ``local'' subgroups of $G$ and $H$ must induce a stable equivalence between the principal block algebras of $G$ and $H$ themselves. 

On the one hand, proving the $Z^*_p$-theorem amounts to proving that Brou\'e's stable equivalence is actually a Morita equivalence.
On the other hand, quoting \cite{KessarLinckelmann2002}, ``it seems to be a general intuition that there should be some block-theoretic analogue of Glauberman's $Z^*$-theorem''. Such an analogue could be stated as follows.

\begin{conjze*}
Let $G$ be a group, $e$ be a block of the group algebra $\ringO G$, and $(P,e_P)$ be an $e$-subpair of the group $G$. Assume that the centraliser $C_G(P)$ controls the $e$-fusion in $G$ with respect to a maximal $e$-subpair $(D,e_D)$ that contains $(P,e_P)$.
Then there exists a Morita equivalence between the block algebras $\ringO Ge$ and $\ringO C_G(P)e_P$.
\end{conjze*}

The main theorem of this article is the following generalisation of Brou\'e's stable equivalence to the context of the above $Z^*_e$-conjecture, for a non-principal block $e$.

\begin{thm}
\label{thm:stable equivalence}
Let $G$ be a group, $e$ be a block of the group algebra $\ringO G$, and $(P,e_P)$ be an $e$-subpair of the group $G$. Assume that the centraliser $C_G(P)$ ``strongly controls the $e$-fusion in $G$'' with respect to a maximal $e$-subpair $(D,e_D)$ that contains $(P,e_P)$. Assume moreover that the defect group $D$ is abelian, or that $p$ is odd and the poset $\mathcal A_{\geqslant 2}(D)$ of elementary abelian subgroups of $D$ of rank at least 2 is connected (\emph{e.g.}, the center $Z(D)$ is non-cyclic). Then there exists a stable equivalence of Morita type between the block algebras $\ringO Ge$ and $\ringO C_G(P)e_P$.
\end{thm}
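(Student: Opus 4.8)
The plan is to mirror Brou\'e's argument for principal blocks and reduce the statement to the construction of a single $p$-permutation $(\ringO Ge,\ringO C_G(P)e_P)$-bimodule $M$ with the following three properties: the indecomposable summands of $M$ have twisted diagonal vertices $\Delta(Q,\phi)$ with $Q\leqslant D$; neither $M$ nor its $\ringO$-dual $M^*$ has a non-zero projective summand; and for every non-trivial subgroup $Q\leqslant D$ the Brauer construction $M(\Delta Q)$ induces a Morita equivalence between the Brauer correspondent blocks of $e$ and of $e_P$ at the corresponding subpairs (these correspond because $C_G(P)$ controls the $e$-fusion, so the $e$-subpairs below $(D,e_D)$ and the $e_P$-subpairs below $(D,e_D)$ match up with the same fusion system $\mathcal F$ on $D$). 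Granting such an $M$, the stable equivalence of Morita type follows from the generalisation of Rouquier's criterion \cite[Theorem 5.6]{Rouquier2001} to arbitrary (non-principal) blocks and to bimodules with endo-permutation, rather than trivial, sources; this last implication is formal. We may of course assume $D\neq 1$, the defect-zero case being trivial.

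First I would produce the local Morita equivalences. Fix a non-trivial $e$-subpair $(Q,e_Q)\leqslant(D,e_D)$, write $N=N_G(Q,e_Q)$, $\bar N=N_{C_G(P)}(Q,e_Q)$, and set $R=O_{p'}(C_G(Q))\cap N$, a normal $p'$-subgroup of $N$. Strong fusion control gives $N=R\bar N$, so $R\cap\bar N$ is a normal $p'$-subgroup of $\bar N$ and $N/R\cong\bar N/(R\cap\bar N)$; moreover this common quotient carries the common fusion data on a defect group $\tilde D_Q$ of the subpair. Let $f_Q=\Br_Q(e)$ and $g_Q$ be the corresponding Brauer correspondent blocks of $\ringO N$ and $\ringO\bar N$. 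Since $R$ and $R\cap\bar N$ are $p'$-groups, the blocks of $\ringO R$ and $\ringO(R\cap\bar N)$ below $f_Q$, $g_Q$ have defect zero, so by Fong--Reynolds and K\"ulshammer's description of blocks with a normal $p'$-subgroup, $\ringO Nf_Q$ and $\ringO\bar N g_Q$ are each Morita equivalent to a twisted group algebra of $N/R$; the two $2$-cocycles agree up to a coboundary (this compatibility being exactly what strong fusion control encodes), whence a Morita equivalence $\ringO Nf_Q\sim\ringO\bar N g_Q$ induced by an indecomposable bimodule $M_Q$ with twisted diagonal vertex $\Delta(\tilde D_Q,\phi)$ and endo-permutation source $V_Q$. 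These equivalences are compatible with inclusion of subpairs: for $(Q,e_Q)\leqslant(Q',e_{Q'})$ one has $M_Q(\Delta\tilde D_{Q'})\cong M_{Q'}$ under the canonical identifications, equivalently the family $(V_Q)_Q$ of sources is compatible under restriction--deflation along $\mathcal F$.

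The heart of the argument, and the step I expect to be the main obstacle, is the globalisation of this local data. One must first show that $(V_Q)_Q$ is the restriction--deflation of a single $\mathcal F$-stable endo-permutation $\ringO D$-module $V$, and this is precisely where the hypothesis on $D$ enters. If $D$ is abelian, the Dade group of $D$ is explicitly known and $\mathcal F$-stability of the relevant class is immediate. If $p$ is odd, one invokes the theorem of Bouc--Th\'evenaz that the Dade group of a $p$-group of odd order is detected on its elementary abelian subgroups of rank $2$; connectivity of $\mathcal A_{\geqslant 2}(D)$ then forces the obstruction to gluing — a class in $\varprojlim^1$, over the orbit category of $\mathcal F$, of the Dade-group functor — to vanish, so the compatible local classes lift to an $\mathcal F$-stable global class, hence to the desired module $V$. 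This is the endo-permutation-module analogue of Brou\'e's gluing of local Morita equivalences for principal blocks (there $V$ is simply the trivial module). Having $V$, I would define $M$ to be the (unique up to isomorphism) indecomposable direct summand of $\Ind_{\Delta D}^{G\times C_G(P)}(V)$ which has vertex a twisted diagonal of $D$, satisfies $\Br_{\Delta D}(M)\neq 0$, and lies over $(f_D,g_D)$; existence and uniqueness come from $(D,e_D)$ being a maximal subpair of both blocks together with the equality of fusion systems, via Green correspondence. Since $\Br_{\Delta D}(M)\neq0$ and $D\neq1$, neither $M$ nor $M^*$ has a projective summand.

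It then remains to verify the crucial local property $M(\Delta Q)\cong M_Q$ for every non-trivial $Q\leqslant D$: this follows from a Mackey formula for Brauer constructions of induced bimodules, the identification $V(\Delta Q)\cong V_Q$, and the matching of $N_{G\times C_G(P)}(\Delta Q)$ with $N_G(Q,e_Q)\times N_{C_G(P)}(Q,e_Q)$ modulo the relevant $p'$-parts, so that $\Br_{\Delta Q}$ of the induced module reproduces exactly the Clifford-theoretic equivalence $M_Q$. Once this is in place, feeding $M$ into the generalised Rouquier criterion yields that $M\otimes_{\ringO C_G(P)e_P}-$ and its adjoint define a stable equivalence of Morita type. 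The delicate points are thus the construction of the $\mathcal F$-stable source $V$ (where the two hypotheses on $D$ are genuinely needed) and the verification that the globally induced bimodule recovers the local equivalences under Brauer construction; by comparison, the construction of the $M_Q$ and the final appeal to Rouquier's criterion are comparatively routine.
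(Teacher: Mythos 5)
Your overall architecture is the same as the paper's (local Morita equivalences from the $p'$-factorisation at each $N_G(Q,e_Q)$, gluing of the endo-permutation sources via Bouc--Th\'evenaz under the connectivity hypothesis, a global bimodule with the glued source, verification of its local behaviour, and a Rouquier-type criterion), but the step you dismiss as ``formal'' is precisely the paper's key imported theorem, and your treatment of it contains a real gap. Rouquier's criterion \cite[Theorem 5.6]{Rouquier2001} is for $p$-permutation bimodules, where the Brauer construction $M(\Delta Q)$ is canonically defined; here $M$ cannot be taken to be a $p$-permutation bimodule (your opening sentence contradicts the rest of your own argument), because the local sources $V_Q$ are sources of simple modules of the $p$-nilpotent groups arising from the factorisation and are non-trivial endo-permutation modules. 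For such $M$ even the definition of the local bimodules $M\sla{\Delta Q}$ is delicate: the slash construction is well defined only up to twisting by a linear character of $N_G(Q,e_Q)/QC_G(Q)$, and the paper has to use the fusion-control hypothesis to make it canonical (Lemma \ref{lem:CanSlashMod}). The passage from the family of local Morita equivalences to a stable equivalence is then Linckelmann's theorem \cite{LinckelmannUnpublished} on stable equivalences with endo-permutation sources, applied at the level of source algebras; this is a genuine theorem, not a formal extension of Rouquier's statement, and your proposal gives no argument for it.

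The construction of $M$ and the verification of its local behaviour are also not as you describe. The module $\Ind_{\Delta D}^{G\times C_G(P)}V$ has in general many indecomposable summands with vertex $\Delta D$, and ``$\Br_{\Delta D}(M)\neq0$'' is not a meaningful normalisation for a non-$p$-permutation module, so Green correspondence does not single out your $M$; the paper instead normalises at $P$, taking the unique indecomposable Brauer-friendly bimodule with source triple $(\Delta D,e_D\otimes e_D^\bop,V)$ whose slashed module at $(\Delta P,e_P\otimes e_P^\bop)$ contains $kC_G(P)e_P$ (Lemma \ref{lem:UnicM}). Likewise $M\sla{\Delta Q}\simeq M_Q$ is not obtained from a Mackey formula: the paper proves it by descending induction on $|Q|$, using the uniqueness statement for the local bimodules (Theorem \ref{thm:local equivariant Morita equivalence}) together with an idempotent-lifting argument (Lemma \ref{lem:DirSumdLift}) to exclude extra indecomposable summands of $M\sla{\Delta Q}$ (Lemma \ref{lem:global module local Morita}). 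Finally, in the gluing step the Bouc--Th\'evenaz exact sequence \cite{BoucThevenaz} concerns the torsion Dade group, with obstruction $\tilde H^0(\mathcal A_{\geqslant 2}(D),\F_2)$ rather than a $\varprojlim^1$ over the orbit category, so you must first show the classes $v_Q$ are torsion (\cite[Proposition 4.4]{BoltjeKuelshammer}); fusion-stability of the glued module $V$ then needs the determinant-one argument for odd $p$, and the abelian case is handled by Puig's gluing \cite{Puig1991} together with $N_G(D,e_D)$-stability, not merely by the Dade group of $D$ being ``explicitly known''. These last points are fixable, but as written they are asserted rather than proved.
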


Let us sketch our proof of this theorem. Although we have two blocks $e$ and $e_P$ with the same local structure, and a family of Morita equivalences between the local block algebras attached to these blocks, we cannot use the theorem of Rouquier quoted above. Indeed, the bimodules that define those local Morita equivalences are not $p$-permutation bimodules; they admit non-trivial endo-permutation sources. Moreover, we have no given bimodule at the ``global'' level that would induce those local bimodules. Thus we need to construct the global bimodule by a gluing procedure, which roughly follows the method initiated by Puig in \cite{Puig1991}. 

To complete this task, we use the language of Brauer-friendly modules, as defined in \cite{Biland2013}. For the reader's convenience, we gather in the first two sections the definitions and results that are needed in the present article. In Section \ref{sec:CentControl}, we specialise these tools to the situation where the centraliser of a $p$-subgroup controls the fusion.

With the assumptions of Theorem \ref{thm:stable equivalence}, we have, for any non-trivial subgroup $Q$ of the defect group $D$, the following ``local'' situation: $e_Q$ is a block of a group $G_Q$ that factorises as $G_Q=O_{p'}(G_Q)C_{G_Q}(P)$ so, by  \cite{KuelshammerRobinson}, there is a Morita equivalence $k G_Q\bar e_Q \sim k C_{G_Q}(P)\br_P(\bar e_Q)$. In Section \ref{sec:local case}, we prove an equivariant version of this Morita equivalence and give a new construction of the Brauer-friendly module $M_Q$ that induces it. Moreover, we identify a vertex subpair of $M_Q$ and provide an explicit description of its source $V_Q$.

The heart of our proof is the definition of a ``global'' source $V$ from the family of local sources $(V_Q)_{1\neq Q\leqslant D}$, which is achieved in Section \ref{sec:SourceGlue}. In the non-abelian defect case, this is an application of the main theorem of \cite{BoucThevenaz}; the obstruction group that appears in this theorem explains the technical condition on the defect group $D$ that we require in Theorem \ref{thm:stable equivalence}. We hope that this technical condition can be lifted in the future.
Finally, in Section \ref{sec:GlobalCase}, we consider the unique indecomposable $(\ringO Ge,\ringO C_G(P)e_P)$-bimodule $M$ with vertex subpair $(\Delta D,e_D\otimes e_D^\bop)$ and source $V$ such that the slashed module $M\sla{\Delta P, e_P\otimes e_P^\bop}$ is isomorphic to the block algebra $kC_G(P)\bar e_P$, and we use the main result of \cite{LinckelmannUnpublished} to prove that $M$ defines a stable equivalence between the blocks $e$ and~$e_P$.

\section{General definitions and notations}
\label{sec:GenDef}

We let $\ringO$ be a complete discrete valuation ring with maximal ideal $\mathfrak p$ and algebraically closed residue field $k$ of characteristic $p$. This includes the case $\ringO=k$, so that every result that is proven over the ring $\ringO$ remains true over the field~$k$. 

For any finite group $G$, we denote by $\Delta G =\{(g,g); g\in G\}$ the diagonal subgroup of the direct product $G\times G$. We denote by $O_{p'}(G)$ the largest normal subgroup of $G$ with order coprime to $p$. For an element $g\in G$ and an object $X$, the notation $^gX$ stands for the object $gXg^{-1}$ whenever this makes sense. Let $e$ be a block of the group $G$, \emph{i.e.}, a primitive central idempotent of the group algebra $\ringO G$. We denote by $\bar e\in kG$ its reduction modulo $\mathfrak p$, and by $e^\bop$ its image by the isomorphism $(\ringO G)^\op\to \ringO G, g\mapsto g^{-1}$. For any two groups $G$ and $H$, we may consider an $(\ringO G,\ringO H)$-bimodule $M$ as an $\ringO(G\times H)$-module. If $f$ is a block of the group $H$ such that $eMf=M$, then the $\ringO(G\times H)$-module $M$ belongs to the block $e\otimes f^\bop$, where we have implicitely identified the algebras $\ringO(G\times H)$ and $\ringO G\otimes_\ringO \ringO H$ via the natural isomorphism.

Let $G$ be a finite group and $S$ be a normal subgroup of $G$. An $S$-interior $G$-algebra over the ring $\ringO$ is a triple $(A,\gamma,\iota)$, where $A$ is an $\ringO$-algebra and $\gamma:G\to\Autom_\catalg(A)$, $\iota : S\to A^\times$ are group morphisms such that, for any $s\in S$, $g\in G$ and $a\in A$,
\[
\gamma(s)(a) = \iota(s).a.\iota(s)^{-1}
\quad \text{ and } \quad
\iota(gsg^{-1}) = \gamma(g)(\iota(s)).
\]
With these notations, $A$ has a natural structure of $\ringO(S\times S)\Delta G$-module.
Let $H$ a subgroup of $G$, and $T$ be a normal subgroup of $H$ contained in $S$. Let $B$ be a $T$-interior $H$-algebra, hence an $\ringO (T\times T)\Delta H$-module. Then the induced module $\Ind_{(T\times T)\Delta H}^{(S\times S)\Delta G} B$ has a natural structure of $S$-interior $G$-algebra (\emph{cf.} \cite{Puig2002} for details about partly interior algebras). 

For instance, let $T$ be a normal subgroup of a finite group $G$, and $S$ be any normal subgroup of $G$ that contains $T$. Let $b$ be a block of the group $T$, let $H=G_b$ be the stabiliser of $b$ in $G$, and $b'=\Tr_{G_b}^G(b)$ be the sum of all $G$-conjugates of $b$. Then the block algebra $\ringO Tb$ is naturally a $T$-interior $G_b$-algebras, via the map $\iota:t\mapsto tb$ and the conjugation action of $G_b$. Moreover the interior structure map $\iota:S\to \bigl(\Ind_{(T\times T)\Delta G_b}^{(S\times S)\Delta G} \ringO Tb\bigr)^\times$ induces an isomorphism of $S$-interior $G$-algebras
\[
\ringO S b' \ \simeq \ \Ind_{(T\times T)\Delta G_b}^{(S\times S)\Delta G} \ringO Tb.
\]

Let $P$ be a $p$-subgroup of a finite group $G$. For any $\ringO G$-module $M$, we denote by $\Br_P(M)$ the Brauer quotient of $M$, \emph{i.e.}, the $k N_G(P)$-module
\[
\Br_P(M) \ = \ M^P \Bigl/ \Bigl(\sum_{Q<P}\Tr_Q^P(M^Q) +\mathfrak p M^P\Bigr),
\]
where $M^P$ is the submodule of $P$-fixed points in $M$, $\Tr_Q^P:M^Q\to M^P$ is the relative trace map (as defined, \emph{e.g.}, in \cite{AlperinBroue}). We denote by $\br_P^M:M^P\to \Br_P(M)$ the projection map. Any morphism of $\ringO G$-modules $u:L\to M$ induces a morphism of $kN_G(P)$-modules $\Br_P(u):\Br_P(L)\to\Br_P(M)$. This defines a functor
\[
\Br_P : \catmod{\ringO G} \ \to\ \catmod{k N_G(P)}.
\]
Notice that we write the Brauer functor $\Br_P$ with a capital B, and the Brauer map $\br_P$ with a lowercase b. If $A$ is a $G$-interior algebra (\emph{e.g.}, $A=\Endom_\ringO(M)$ for some $\ringO G$-module $M$), then the Brauer quotient $\Br_P(A)$ has a natural structure of $C_G(P)$-interior $N_G(P)$-algebra over the field $k$.

\section{Brauer-friendly modules and the slash construction}
\label{sec:BFmods}

This section gathers definitions and results from \cite{AlperinBroue}, \cite{Sibley1990} and \cite{Biland2013}. Notice that the latter reference uses a functorial approach that we do not need here.

Let $G$ be a finite group. The Frobenius category $\catfr(G)$ is defined as follows: an object is a $p$-subgroup; an arrow $\phi:P\to Q$ is a group morphism that is induced by an inner automorphism of the group $G$. Let $P$ be a $p$-subgroup of $G$, and let $V$ be an indecomposable $\ringO P$-module that is capped, \emph{i.e.}, with vertex $P$. We say that $(P,V)$ is a fusion-stable endo-permutation source pair if, for any $p$-subgroup $Q$ of $G$ and any two arrows $\phi_1,\phi_2:Q\to P$ in the category $\catfr(G)$, the direct sum $\Res_{\phi_1} V\oplus \Res_{\phi_2} V$ is an endo-permutation $\ringO Q$-module (\emph{i.e.}, the restrictions $\Res_{\phi_1} V$ and $\Res_{\phi_2} V$ are compatible endo-permutation $\ringO Q$-modules). Let $M$ be an indecomposable $\ringO G$-module with vertex $P$ and source $V$. We know from \cite[Theorem 1.5]{Urfer2007} that $M$ is an endo-$p$-permutation $\ringO G$-module if, and only if, the source pair $(P,V)$ is a fusion-stable endo-permutation source pair.

These ideas admit the following generalisation to blocks. Let $e$ be a block of $G$. A subpair of the group $G$ is a pair $(P,e_P)$, where $P$ is a $p$-subgroup of $G$ and $e_P$ is a block of the group $C_G(P)$. The idempotent $e_P$ is actually a block of the group $H$ whenever $H$ is a subgroup of $G$ such that $C_G(P)\leqslant H\leqslant N_G(P,e_P)$.
The subpair $(P,e_P)$ is an $e$-subpair if $\bar e_P\br_P(e)\neq0$, where $\br_P:(\ringO G)^P\to kC_G(P)$ denotes the Brauer morphism. Let $(P,e_P)$ and $(Q,e_Q)$ be two $e$-subpairs of $G$. One writes $(P,e_P)\trianglelefteqslant (Q,e_Q)$ if $(Q,e_Q)$ is an $e_P$-subpair of the group $N_G(P,e_P)$ such that $P\leqslant Q$. The antisymmetric relation $\trianglelefteqslant$ generates an order on the set of $e$-subpairs, and the group $G$ acts by conjugation on the resulting poset. The Brauer category $\catbr(G,e)$ is defined as follows: an object is an $e$-subpair $(P,e_P)$; an arrow $\phi:(P,e_P)\to (Q,e_Q)$ is a group morphism $\phi:P\to Q$ of the form $x\mapsto {^gx}$ for some element $g\in G$ such that $^g(P,e_P) \leqslant (Q,e_Q)$. This category is equivalent to a fusion system $\catf$ of the block $e$, as defined in \cite{AKO}.

Let $M$ be an indecomposable $\ringO Ge$-module, and $P$ be a vertex of $M$. Let $M'$ be an indecomposable $\ringO N_G(P)$-module that is a Green correspondent of $M$, and $f$ be the block of $N_G(P)$ such that $fM\neq 0$. Let $e_P$ be a block of $C_G(P)$ such that $fe_P\neq 0$. The subpair $(P,e_P)$ is called a vertex subpair of the indecomposable module $M$. It follows from Nagao's theorem that $(P,e_P)$ is an $e$-subpair of the group $G$. Any source $V$ of the indecomposable $\ringO N_G(P,e_P)$-module $L=e_PM'$ with respect to the vertex $P$ is called a source of $M$ with respect to the vertex subpair $(P,e_P)$. A source triple $(P,e_P,V)$ of $M$ is well-defined up to conjugation in the group $G$. 

Let $(P,e_P)$ be an $e$-subpair of the group $G$, and let $V$ be a capped indecomposable $\ringO P$-module. We say that $(P,e_P,V)$ is a fusion-stable endo-permutation source triple if, for any $e$-subpair $(Q,e_Q)$ and any two arrows $\phi_1,\phi_2:(Q,e_Q)\to (P,e_P)$ in the Brauer category $\catbr(G,e)$, the direct sum $\Res_{\phi_1} V\oplus \Res_{\phi_2} V$ is an endo-permutation $\ringO Q$-module.
We say that two fusion-stable endo-permutation source triples $(P_1,e_1,V_1)$ and $(P_2,e_2,V_2)$ are compatible if, for any $e$-subpair $(Q,e_Q)$ and any two arrows $\phi_1:(Q,e_Q)\to (P_1,e_1)$, $\phi_2:(Q,e_Q)\to (P_2,e_2)$ in the Brauer category $\catbr(G,e)$, the direct sum $\Res_{\phi_1} V_1 \oplus \Res_{\phi_2} V_2$ is an endo-permutation $\ringO Q$-module. 

We say that an $\ringO Ge$-module $M$ is Brauer-friendly if it is a direct sum of indecomposable $\ringO Ge$-modules with compatible fusion-stable endo-permutation source triples. The following two lemmas are straightforward from \cite[Lemma~9, Theorem~15, and proof of Lemma~18]{Biland2013}.

\begin{lem}
\label{lem:BFModSubpair}
Let $M$ be a Brauer-friendly $\ringO Ge$-module, and $(P,e_P)$ be an $e$-subpair of the group $G$. Any capped indecomposable direct summand of the $\ringO P$-module $e_P M$ is an endo-permutation $\ringO P$-module, and there is at most one isomorphism class of such $\ringO P$-modules.
\end{lem}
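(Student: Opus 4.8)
The plan is to reduce the statement to the case of a single indecomposable summand of $M$ and then to read it off from the structural results on Brauer-friendly modules in \cite{Biland2013}. Since $M$ is Brauer-friendly, I would first write $M=\bigoplus_i M_i$ with each $M_i$ indecomposable and with the source triples $(R_i,f_i,V_i)$ of the $M_i$ pairwise compatible fusion-stable endo-permutation source triples. Then $e_PM=\bigoplus_i e_PM_i$, so by the Krull--Schmidt theorem every capped indecomposable direct summand of the $\ringO P$-module $\Res_P(e_PM)$ is isomorphic to a capped indecomposable direct summand of some $\Res_P(e_PM_i)$. It therefore suffices to prove: (a) each capped indecomposable summand of $\Res_P(e_PM_i)$ is an endo-permutation $\ringO P$-module; and (b) two such summands, coming from possibly different indices $i$ and $j$, are isomorphic.

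For (a), I would fix $M_i$ and use the Green-correspondence and Mackey-type description of $e_PM_i$ that underlies \cite[Lemma~9 and Theorem~15]{Biland2013}: the capped indecomposable direct summands of $\Res_P(e_PM_i)$ are accounted for by the $G$-conjugates $^g(R_i,f_i)$ of the vertex subpair of $M_i$ that contain $(P,e_P)$ in the subpair order, and for such a $g$ the corresponding summand is isomorphic to a capped direct summand of $\Res_\phi V_i$, where $\phi$ is the arrow of the Brauer category $\catbr(G,e)$ from $(P,e_P)$ into $(R_i,f_i)$ induced by conjugation by $g^{-1}$. Fusion-stability of the triple $(R_i,f_i,V_i)$ says precisely that, as $g$ varies, these $\ringO P$-modules are endo-permutation and pairwise compatible; since compatible capped indecomposable endo-permutation $\ringO P$-modules are isomorphic, this yields (a) together with uniqueness within a single $M_i$. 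This is exactly the argument used in the proof of \cite[Lemma~18]{Biland2013}. For (b), the same reasoning applies word for word, with the compatibility of the triples $(R_i,f_i,V_i)$ and $(R_j,f_j,V_j)$ playing the role of the fusion-stability of one triple (if one of $e_PM_i$, $e_PM_j$ has no capped summand there is nothing to check).

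The step needing the most care --- and the one where I would lean hardest on \cite{Biland2013} --- is the block-theoretic bookkeeping in this Mackey decomposition: that multiplication by $e_P$ picks out exactly the terms of $\Res_{C_G(P)}M_i$ attached to conjugates of the vertex subpair that dominate $(P,e_P)$, and that the capped $\ringO P$-modules thereby obtained are genuinely restrictions of the source $V_i$ along arrows of $\catbr(G,e)$. This is governed by Nagao's theorem and the theory of $e$-subpairs recalled above; once it is in hand, the lemma follows by combining it with the Krull--Schmidt reduction above, as asserted.
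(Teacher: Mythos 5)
Your proposal is correct and follows essentially the same route as the paper, which gives no independent argument but declares the lemma straightforward from \cite[Lemma~9, Theorem~15, and proof of Lemma~18]{Biland2013}; your Krull--Schmidt reduction to a single indecomposable summand and the identification of capped summands of $e_PM_i$ with caps of $\Res_\phi V_i$ along Brauer-category arrows, combined with fusion-stability (within one $M_i$) and compatibility of the source triples (across $M_i$ and $M_j$), is precisely the argument those citations encode.
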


\begin{lem}
\label{lem:SlashMod}
Let $M$ be a Brauer-friendly $\ringO Ge$-module. Let $(P,e_P)$ be an $e$-subpair of the group $G$, and $H$ be a subgroup of $G$ such that $PC_G(P)\leqslant H\leqslant N_G(P,e_P)$.
\begin{enumerate}[(i)]
\item There exists a Brauer-friendly $kH\bar e_P$-module $M_0$ and an isomorphism of $C_G(P)$-interior $H$-algebras
\[
\theta_0 : \Br_P(e_P\Endom_\ringO(M)e_P) \to \Endom_k(M_0).
\]
\item If $(M'_0,\theta'_0)$ is another such pair, then there exists a linear character $\chi : H/PC_G(P)\to k^\times$ and an isomorphism of $kH\bar e_P$-modules $\phi:\chi_*M_0\to M'_0$ (where $\chi_*M_0$ means the $kH$-module $M_0$ twisted by $\chi$), which induces a commutative diagram
\[
\xymatrix{
 & \Br_P(e_P\Endom_\ringO(M)e_P) \ar[ld]_{\theta_0} \ar[rd]^{\theta'_0} \\
\Endom_k(M_0) \ar[rr]^{u\,\mapsto\, \phi u\phi^{-1}} && \Endom_k(M'_0).
}
\]
\item If $(Q,e_Q,V)$ is a source triple of an indecomposable direct summand of $M_0$, then there is a source triple $(Q',e_Q',V')$ of an indecomposable direct summand of $M$ such that $(P,e_P)\leqslant(Q,e_Q)\leqslant (Q',e_Q')$, and that $V$ is a direct summand of the $P$-slashed module $[\Res^{Q'}_Q V']\sla P$.
\end{enumerate}
\end{lem}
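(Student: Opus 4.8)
The plan is to phrase everything through the $G$-interior $\ringO$-algebra $A=\Endom_\ringO(M)$ and its Brauer quotient $B:=\Br_P(e_PAe_P)$, and then to recover $M_0$ as a module realising the $k$-algebra $B$. First, since $P$ centralises $C_G(P)$ the block $\bar e_P$ is $P$-stable, and since $H\leqslant N_G(P,e_P)$ it is $H$-stable; hence $B$ is naturally a $C_G(P)$-interior $H$-algebra over $k$, on which $P$ acts trivially (as on every Brauer quotient). The first thing I would establish is that $B$ is a matrix algebra over $k$. Identifying $e_PAe_P$ with $\Endom_\ringO(e_PM)$, where $e_PM$ denotes the block-$e_P$ component of $\Res^G_{C_G(P)}M$ (an $\ringO P$-module, as $e_P$ is $P$-stable), Lemma~\ref{lem:BFModSubpair} tells us that all capped indecomposable summands of $\Res^{C_G(P)}_P e_PM$ are isomorphic to a single endo-permutation $\ringO P$-module $V_0$; if there are none then $B=0$ and the statement is vacuous. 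Writing $\Res^{C_G(P)}_P e_PM\simeq V_0^{\oplus n}\oplus W$ with $W$ without capped summand, every indecomposable summand of $V_0^*\otimes_\ringO W$, of $W^*\otimes_\ringO V_0^{\oplus n}$ and of $\Endom_\ringO(W)$ has vertex properly contained in $P$, so these pieces die under $\Br_P$ and $B\simeq M_n\bigl(\Br_P\Endom_\ringO(V_0)\bigr)$. Since $\Endom_\ringO(V_0)$ is a Dade $P$-algebra, $\Br_P\Endom_\ringO(V_0)\simeq k$, whence $B\simeq M_n(k)$.

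It then remains to upgrade this into an isomorphism of $C_G(P)$-interior $H$-algebras $B\simeq\Endom_k(M_0)$ for a $kH$-module $M_0$. This amounts to lifting the conjugation homomorphism $H\to\Autom_{k\text{-alg}}(B)=B^\times/k^\times$ (which kills $P$) to a homomorphism $H\to B^\times$ restricting on $C_G(P)$ to the given interior map; then $M_0$ is the simple $B$-module with $H$ acting through that lift, and it is Brauer-friendly because its indecomposable summands inherit compatible fusion-stable endo-permutation source triples from those of $M$. The obstruction to such a lift is a cohomology class in $H^2(H/PC_G(P);k^\times)$, and any two lifts differ by an element of $\Homom(H/PC_G(P),k^\times)$; this latter assertion is precisely part~(ii), which I would settle here by the standard central-extension bookkeeping, the ambiguity being realised by the twists $\chi_*M_0$. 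The vanishing of the obstruction class is the crux of the matter: one has genuinely to construct the lift, and this is where fusion-stability of the source of $M$ is used. For a $p$-permutation module it is automatic (one simply takes $M_0=\Br_P(M)$), but in general the lift must be produced from a chosen embedding of $V_0$ together with the compatibility of the relevant restrictions that fusion-stability guarantees, which is exactly the construction of the slash functor carried out in \cite{Biland2013}.

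Finally, for part~(iii): one may assume $M$ indecomposable, with source triple $(Q',e_Q',V')$, and $(P,e_P)\leqslant(Q',e_Q')$ up to conjugacy (otherwise the slash vanishes and there is nothing to prove). Tracking vertices and sources through the slash functor by means of Green correspondence and Nagao's theorem, exactly as in the proof of \cite[Lemma~18]{Biland2013}, one finds that every vertex subpair $(Q,e_Q)$ of an indecomposable summand of $M_0$ satisfies $(P,e_P)\leqslant(Q,e_Q)\leqslant(Q',e_Q')$ after conjugation, with a source $V$ that is a direct summand of $[\Res^{Q'}_QV']\sla P$; the essential input is the direct Dade-algebra computation of how an endo-permutation source transforms under restriction to $Q$ followed by slashing at $P$. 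The main obstacle, as indicated, is the non-emptiness in part~(i), namely the vanishing of the $H^2\bigl(H/PC_G(P);k^\times\bigr)$-obstruction, which is the point at which one really leans on the machinery of \cite{Biland2013} and on the fusion-stability built into the definition of a Brauer-friendly module.
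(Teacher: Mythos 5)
Your proposal is correct and takes essentially the same route as the paper, which gives no independent proof of this lemma but simply declares it straightforward from \cite{Biland2013} (Lemma~9, Theorem~15, and the proof of Lemma~18). Your reconstruction --- reducing $\Br_P(e_P\Endom_\ringO(M)e_P)$ to a matrix algebra via Lemma~\ref{lem:BFModSubpair}, treating (i)--(ii) as the lifting of the outer $H$-action with character ambiguity $\Homom(H/PC_G(P),k^\times)$, and tracking vertex subpairs and sources for (iii) --- is exactly the content of those citations, with the one genuinely nontrivial step (vanishing of the obstruction, i.e.\ existence of $M_0$) deferred, as in the paper, to the slash-functor construction of \cite{Biland2013}.
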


The pair $(M_0,\theta_0)$, or just the $kH\bar e_P$-module $M_0$, is called a $(P,e_P)$-slashed module attached to $M$ over the group $H$. We will usually denote it by $M\sla{P,e_P}$. If $M$ is a $p$-permutation module, then there is a canonical choice of $(P,e_P)$-slashed module attached to $M$: the Brauer quotient $\Br_{(P,e_P)}(M) = \Br_P(e_P M)$, together with the natural isomorphism $\Br_P(e_P\Endom_\ringO(M)e_P) \simeq \Endom_k(\Br_P(e_P M))$.
In general, there is no such canonical choice.

Let $M$ be a Brauer-friendly $\ringO Ge$-module, $(P,e_P)\trianglelefteqslant (Q,e_Q)$ be two $e$-subpairs of $G$, and $H$, $K$ be two subgroups of $G$ such that $PC_G(P)\leqslant H\leqslant N_G(P,e_P)$ and $QC_G(Q)\leqslant K\leqslant N_H(Q,e_Q)$.
Let the pair $(M_0,\theta_0)$ be a $(P,e_P)$-slashed module attached to $M$ over the group $H$, and the pair $(M_1,\theta_1)$ be a $(Q,e_Q)$-slashed module attached to $M_0$ over the group $K$. 
As appears in the proof of \cite[Theorem 19]{Biland2013}, there is a natural isomorphism \linebreak $\psi_1 : \Br_{PQ}(e_{PQ}\Endom_\ringO (M)e_{PQ}) \to \Br_{PQ}(\bar e_{PQ} \Br_Q(e_QMe_Q) \bar e_{PQ})$. 
Set $\theta'_1 = \theta_1\circ \Br_{PQ}(\bar e_{PQ}\theta_0\bar e_{PQ})\circ \psi_1 :\Br_{PQ}(e_{PQ}\Endom_\ringO (M)e_{PQ})\to \Endom_k(M_1)$. The following lemma expresses the transitivity of the slash construction.

\begin{lem}
\label{lem:TranSlash}
With the above notations, the pair $(M_1,\theta'_1)$ is a $(PQ,e_{PQ})$-slashed module attached to $M$ over the group $K$.
\end{lem}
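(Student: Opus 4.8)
The plan is to prove Lemma~\ref{lem:TranSlash} by checking that the pair $(M_1,\theta'_1)$ satisfies the defining property of a $(PQ,e_{PQ})$-slashed module, namely that $\theta'_1$ is an isomorphism of $C_G(PQ)$-interior $K$-algebras from $\Br_{PQ}(e_{PQ}\Endom_\ringO(M)e_{PQ})$ to $\Endom_k(M_1)$, and that $M_1$ is a Brauer-friendly $kK\bar e_{PQ}$-module. The Brauer-friendliness of $M_1$ is already available: $M_0$ is Brauer-friendly by Lemma~\ref{lem:SlashMod}(i), and a slashed module of a Brauer-friendly module is again Brauer-friendly, so $M_1$ is Brauer-friendly over $K$. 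Thus the real content is that $\theta'_1$ is a well-defined isomorphism of interior algebras, and for this the key input is the natural isomorphism $\psi_1$ extracted from the proof of \cite[Theorem~19]{Biland2013}.

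First I would set up the identifications carefully. The algebra $\Endom_\ringO(M)$ is a $G$-interior algebra, $e_P\Endom_\ringO(M)e_P$ is a $C_G(P)$-interior $N_G(P,e_P)$-algebra, and applying $\Br_P$ gives a $C_G(P)$-interior $N_{N_G(P)}(e_P)$-algebra, which $\theta_0$ identifies with $\Endom_k(M_0)$ over $H$. Restricting this $kH$-algebra structure, forming $e_Q$-truncation (note $e_Q$ is a block of $C_G(Q)$ and makes sense inside the algebra $\Br_P(e_P\Endom_\ringO(M)e_P)$ via the map on idempotents, since $(P,e_P)\trianglelefteqslant(Q,e_Q)$ forces $\bar e_Q\br_Q(\bar e_P)\neq 0$), and applying $\Br_Q$, I obtain $\Br_Q(\bar e_Q\Endom_k(M_0)\bar e_Q)$, which $\theta_1$ identifies with $\Endom_k(M_1)$ over $K$. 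On the other side, $e_{PQ}$ is by definition the block of $C_G(PQ)$ determined by the chain, and $\Br_{PQ}(e_{PQ}\Endom_\ringO(M)e_{PQ})$ is a $C_G(PQ)$-interior $N_G(PQ,e_{PQ})$-algebra; restricting along $K\leqslant N_G(PQ,e_{PQ})$ gives a $kK$-algebra. The composite $\theta'_1 = \theta_1\circ\Br_{PQ}(\bar e_{PQ}\theta_0\bar e_{PQ})\circ\psi_1$ is then a map between $kK$-algebras, and each factor is an isomorphism: $\psi_1$ by its construction in \cite{Biland2013}, $\theta_0$ by hypothesis (so $\Br_{PQ}(\bar e_{PQ}\theta_0\bar e_{PQ})$ is an isomorphism between the truncated-and-Brauer-quotiented algebras), and $\theta_1$ by hypothesis; hence $\theta'_1$ is an isomorphism of $\ringO$-algebras.

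The point that needs care, and which I expect to be the main obstacle, is the compatibility of the $K$-interior structures along the chain of isomorphisms — i.e. that $\psi_1$ is not merely a natural isomorphism of $k$-algebras but intertwines the $C_G(PQ)$-interior $K$-algebra structures on the two sides, and likewise that $\Br_{PQ}(\bar e_{PQ}\theta_0\bar e_{PQ})$ and $\theta_1$ respect these structures. For $\theta_1$ this is part of the definition of a slashed module. For $\theta_0$ one has to observe that restricting $\theta_0$ from an $H$-algebra isomorphism to a $K$-algebra isomorphism, then truncating by $\bar e_{PQ}$ and applying $\Br_{PQ}$, preserves the relevant interior structure because $K$ normalises $(PQ,e_{PQ})$ and $\Br_{PQ}$ of a $K$-interior algebra is canonically a $C_G(PQ)$-interior $N_K(PQ)$-algebra, with $K\leqslant N_K(PQ)$ since $PQ$ is normal in $K$ — here one uses $QC_G(Q)\leqslant K\leqslant N_H(Q,e_Q)$ together with $PC_G(P)\leqslant H$, which gives $P\trianglelefteqslant K$ and hence $PQ\trianglelefteqslant K$. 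For $\psi_1$ the construction in \cite{Biland2013} is designed precisely so that the isomorphism $\Br_{PQ}(e_{PQ}\Endom_\ringO(M)e_{PQ}) \to \Br_{PQ}(\bar e_{PQ}\Br_Q(e_QMe_Q)\bar e_{PQ})$ is one of interior algebras over the appropriate group; I would invoke this directly. Assembling these, $\theta'_1$ is an isomorphism of $C_G(PQ)$-interior $K$-algebras, and together with the Brauer-friendliness of $M_1$ this is exactly the statement that $(M_1,\theta'_1)$ is a $(PQ,e_{PQ})$-slashed module attached to $M$ over $K$, completing the proof.
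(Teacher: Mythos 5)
The paper offers no proof of this lemma to compare yours against: it is stated as an immediate by-product of the construction imported from the proof of \cite[Theorem 19]{Biland2013}, which is also where the isomorphism $\psi_1$ (and hence $\theta'_1$) comes from. Your outline is the natural unwinding of the definition coming from Lemma \ref{lem:SlashMod}(i) — check that $M_1$ is Brauer-friendly and that $\theta'_1$ is an isomorphism of $C_G(PQ)$-interior $K$-algebras — and you correctly locate the only substantive point, namely that $\psi_1$ intertwines the interior structures and is compatible with the $\bar e_{PQ}$-truncations and with $\theta_0$, $\theta_1$. But at exactly that point you write that you would ``invoke this directly'' from \cite{Biland2013}, which is precisely what the paper itself does; so your proposal is correct as a reduction, but it contains no independent verification of the one fact that carries the lemma, and is therefore at the same level of completeness as the paper's own (absent) argument.

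Two smaller remarks. By the paper's definition of $\trianglelefteqslant$ one has $P\leqslant Q$, so here $PQ=Q$ and $e_{PQ}=e_Q$; this makes your care about $PQ\trianglelefteqslant K$ automatic (since $K\leqslant N_H(Q,e_Q)$ normalises $Q$), and it also disposes of a point you pass over silently: for $(M_1,\theta'_1)$ to be a $(PQ,e_{PQ})$-slashed module of $M$, the module $M_1$ must be a $kK\bar e_{PQ}$-module, which Lemma \ref{lem:SlashMod}(i) only gives with respect to $\bar e_Q$; if one reads $PQ$ as genuinely larger than $Q$ (as your write-up implicitly does, and as the application in Lemma \ref{lem:CanSlashMod} suggests), this identification of the relevant block action would need an explicit argument rather than being folded into ``Brauer-friendliness is inherited.''
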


The next lemma will allow us to lift certain indecomposable direct summands through the slash construction.

\begin{lem}
\label{lem:DirSumdLift}
Let $M$ be a Brauer-friendly $\ringO Ge$-module, and $(P,e_P)$ be an $e$-subpair. Let $(M_0,\theta_0)$ be a $(P,e_P)$-slashed module attached to $M$ over the group $N_G(P,e_P)$. If the $kN_G(P,e_P)$-module $M_0$ admits an direct summand with vertex $P$, then the $\ringO G$-module $M$ admits an indecomposable direct summand with vertex subpair $(P,e_P)$.
\end{lem}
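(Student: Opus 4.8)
The plan is to reduce this to a statement about endomorphism algebras, using the slash construction at the level of algebras rather than modules, and then applying a lifting argument for idempotents together with a vertex comparison.

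\medskip

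First I would set $A = \Endom_\ringO(M)$, a $G$-interior algebra, and consider the algebra $\bar A_0 = \Br_P(e_P A e_P)$, which by hypothesis is isomorphic, via $\theta_0$, to $\Endom_k(M_0)$ as a $C_G(P)$-interior $N_G(P,e_P)$-algebra. The hypothesis that $M_0$ has a direct summand with vertex $P$ means, by the theory of vertices via interior algebras (Puig's pointed groups, or just the classical characterisation), that there is a primitive idempotent $i_0 \in \Endom_k(M_0)^P = \bar A_0^P$ whose Brauer quotient $\Br_P(i_0)$ (with respect to the $P$-interior structure coming from $\theta_0$) is nonzero; equivalently, $i_0$ is a primitive idempotent of $\bar A_0^P$ that is not in the image of $\Tr_Q^P$ for any proper $Q < P$. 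The point $P_{i_0}$ of $P$ on $\bar A_0$ is then a local point. Via $\theta_0^{-1}$ this gives a nonzero idempotent $\bar j \in (\Br_P(e_P A e_P))^P$ which lifts a local point of $P$ on $\Br_P(e_P A e_P)$.

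\medskip

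Next I would lift this idempotent through the Brauer morphism and through reduction mod $\mathfrak p$. Since $A$ is a $p$-permutation $\ringO(G\times G)$-module's endomorphism algebra — more precisely, since $e_P A e_P$ is a module over $\ringO P$ (acting by conjugation) and the Brauer morphism $\br_P : (e_P A e_P)^P \to \Br_P(e_P A e_P)$ is a surjective algebra homomorphism — one can lift the idempotent $\bar j$ first to an idempotent of $(e_P A e_P)^P$, using that $\ringO$ is complete, so idempotents lift along the surjection $(e_P A e_P)^P \twoheadrightarrow \Br_P(e_P A e_P)$ (the kernel being, up to $\mathfrak p(e_PAe_P)^P$, a sum of relative traces, hence contained in the Jacobson radical modulo the relevant quotient; here one uses the standard fact that idempotents lift modulo the kernel of a Brauer morphism on a $p$-permutation-like algebra). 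This yields a primitive (after refining) idempotent $j \in (e_P A e_P)^P = \Endom_{\ringO P}(e_P M)$ with $\br_P(j) \neq 0$. Then $jM = j(e_P M)$ is, up to taking an indecomposable summand, an indecomposable direct summand $N$ of $e_P M$ as an $\ringO P$-module with vertex exactly $P$ (nonvanishing of $\br_P(j)$ forces the vertex to be $P$, and it cannot be larger). By Lemma~\ref{lem:BFModSubpair}, since $M$ is Brauer-friendly, such $N$ is moreover an endo-permutation $\ringO P$-module and is determined up to isomorphism.

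\medskip

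Finally I would upgrade this from a summand of the \emph{restricted} module $e_P M$ to an indecomposable direct summand of $M$ itself with vertex subpair $(P,e_P)$. This is the step I expect to be the main obstacle, and it is where the block idempotent $e_P$ and the subgroup $N_G(P,e_P)$ really enter. The idempotent $j$ lives a priori only in $(e_P A e_P)^P$, not in $(e_P A e_P)^{N_G(P,e_P)}$; one wants to promote $P_j$ to a local pointed group on $e_P A e_P$ and then transfer it, via a suitable local pointed group on $A$ covering it, to an indecomposable summand of $M$. Concretely, I would argue as follows: the local point $P_j$ of $P$ on $e_P A e_P \subseteq A$ (after suitably relating $e_P A e_P$ and $e_P A$ — note $e_P M$ as an $\ringO P$-module is a summand of $\Res^G_P M$, and the relevant endomorphism algebra is a quotient/corner of $A$) determines, by the general theory (Green correspondence in the language of pointed groups, \emph{cf.} Puig), a unique local pointed group $P_\gamma$ on $A$ with $P_\gamma$ projective relative to nothing smaller, hence an indecomposable direct summand $M_\gamma$ of $M$ with vertex $P$; and the block of $N_G(P)$ through which its Green correspondent passes is forced, by the condition $\br_P(j)\in \Br_P(e_P A e_P)$ (i.e. $j$ genuinely involves $e_P$), to be one satisfying $f e_P \neq 0$. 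Thus $(P, e_P)$ is a vertex subpair of $M_\gamma$, which is the desired indecomposable direct summand of $M$. The delicate point is checking that the idempotent can be chosen inside the $e_P$-corner in a way compatible with the $P$-action and that the Brauer-quotient nonvanishing is preserved under all these reductions; this is exactly the bookkeeping that Lemma~\ref{lem:SlashMod}(iii) was designed to handle in the reverse direction, and I would invoke its proof to track the source through $M\sla{P,e_P} = M_0$.
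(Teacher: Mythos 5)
Your first two steps replace the actual hypothesis by a strictly weaker one, and your third step does not recover what was lost. ``$M_0$ admits a direct summand with vertex $P$'' is a statement about $M_0$ as a $kN_G(P,e_P)$-module: it amounts to the existence of a primitive idempotent $i_0$ of $\Endom_{kN_G(P,e_P)}(M_0)$ lying in the relative trace ideal $\Tr_P^{N_G(P,e_P)}(\Endom_k(M_0))$. It is \emph{not} equivalent to the existence of a local point of $P$ on $\Endom_k(M_0)$, i.e.\ a primitive idempotent of $\Endom_k(M_0)^P$ with nonzero Brauer image: that only says that $\Res_P M_0$ has a capped direct summand. The weaker condition can hold while the conclusion of the lemma fails; for instance, if $M$ is indecomposable with vertex subpair strictly containing $(P,e_P)$, the restriction of $M_0$ to $P$ may well have capped summands, yet $M$ has no summand with vertex subpair $(P,e_P)$. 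So the moment you pass to $(e_PAe_P)^P$ and the local point $P_j$, you have discarded exactly the $N_G(P,e_P)$-equivariant information that makes the statement true, and no amount of $P$-local idempotent lifting through $\br_P$ can restore it.

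Consequently the third step is a genuine gap, not bookkeeping: a local pointed group $P_\gamma$ on $A=\Endom_\ringO(M)$ is contained in some pointed group $G_\alpha$, but containment only forces the vertex of $M_\alpha$ to contain a $G$-conjugate of $P$; nothing in ``Green correspondence for pointed groups'' makes $M_\alpha$ relatively $P$-projective, which is what you need. The paper's proof stays $N_G(P,e_P)$-equivariant throughout: it takes $i_0$ primitive in $\Endom_{kN_G(P,e_P)}(M_0)$ with $i_0\in\Tr_P^{N_G(P,e_P)}(\Endom_k(M_0))$, checks by a Mackey-type computation that the map $\beta=\theta_0\circ\br_P(e_P\,\cdot\,e_P)$ carries the ideal $\Tr_P^G(\Endom_{\ringO P}(M))$ of $\Endom_{\ringO G}(M)$ onto the ideal $\Tr_P^{N_G(P,e_P)}(\Endom_k(M_0))$ of $\Endom_{kN_G(P,e_P)}(M_0)$, and then lifts $i_0$ through $\beta$ to a primitive idempotent $i$ of $\Endom_{\ringO G}(M)$ lying in $\Tr_P^G(\Endom_{\ringO P}(M))$, using the ideal-compatible idempotent-lifting result of \cite{BroueRouquier}. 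Then $X=iM$ is a relatively $P$-projective indecomposable summand of $M$ whose $(P,e_P)$-slashed module is $X_0\neq 0$, which pins down the vertex subpair. If you want to repair your outline, this equivariant lifting through $\beta$, with control of the relative trace ideals on both sides, is the missing ingredient.
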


\begin{proof} Let $X_0$ be an indecomposable direct summand of $M_0$ with vertex $P$. Then there exists a primitive idempotent $i_0$ of the algebra $\Endom_{kN_G(P,e_P)}(M_0)$ such that $X_0=i_0M_0$, and moreover $i_0$ lies in the ideal $\Tr_P^{\!N_G(P,e_P)}(\Endom_k(M_0))$.

We consider the projection map $\beta:\Endom_{\ringO P}(L)\to \Endom_\ringO(M_0)$, defined by $\beta(u)=\theta_0\circ \br_P(e_Pue_P)$.
For any element $u\in \Endom_{\ringO P}(M)$, we have
\begin{align*}
\beta\circ\Tr_P^G(e_P \, u \, e_P)
\ &= \!\!\!\!
\sum_{g\in N_G(P,e_P)\setminus G/P} 
\!\!\!\!
\theta_0\circ \br_{P} \circ \Tr_{N_{^gP}(P,e_P)}^{N_G(P,e_P)}
(e_P \, \null^g\!e_P \, \null^gu \, \null^ge_P \, e_P)
\\
&=\ 
\Tr_P^{\!N_G(P,e_P)}\circ\, \beta(u)
\end{align*}
This computation proves that the map $\beta$ sends the ideal $\Tr_P^G(\Endom_{\ringO P}(M))$ of the algebra $\Endom_{\ringO G}(M)$ onto the ideal $\Tr_P^{\!N_G(P,e_P)}(\Endom_k(M_0))$ of the algebra $\Endom_{kN_G(Q,e_Q)}(M)$. Thus we know from \cite{BroueRouquier} that the primitive idempotent $i_0$ can be lifted through the map $\beta$, \emph{i.e.}, there exists a primitive idempotent $i$ of the algebra $\Endom_{\ringO G}(M)$ such that $i\in \Tr_P^G(\Endom_{\ringO P}(M))$ and $i_0=\beta(i)$. Then the indecomposable $\ringO G$-module $X=iM$ is a relatively $P$-projective direct summand of $M$. Moreover, $X$ admits $X_0$ as a $(P,e_P)$-slashed module over the group $N_G(P,e_P)$, so $(P,e_P)$ is a vertex subpair of $X$. 
\end{proof}

\section{Slashed modules and centrally controlled blocks}
\label{sec:CentControl}

Let $e$ be a block of a finite group $G$. 
We say that a subgroup $H$ of $G$ controls (\emph{resp.} strongly controls) the $e$-fusion in $G$ with respect to a given maximal subpair $(D,e_D)$ if the defect group $D$ is contained in $H$ and, for any non-trivial $e$-subpair $(Q,e_Q)$ contained in $(D,e_D)$,
\[
N_G(Q,e_Q) \leqslant C_G(Q)\, H \qquad\quad \text{(\emph{resp.} } N_G(Q,e_Q) \leqslant O_{p'}(C_G(Q))\, H\, ).
\]

If $H=C_G(P)$ is the centraliser of an $e$-subpair $(P,e_P)$ contained in $(D,e_D)$, then both conditions imply that the Brauer categories $\catbr(G,e)$ and $\catbr(C_G(P),e_P)$ are equivalent and, in particular, that $N_G(P,e_P) = C_G(P)$.

In the next two lemmas, we assume that $e$ is a block of a finite group $G$, and that $(P,e_P)$ be an $e$-subpair of $G$ such that the centraliser $C_G(P)$ controls the $e$-fusion in $G$ with respect to a maximal subpair $(D,e_D)$ that contains $(P,e_P)$. In this context of centrally controlled blocks, the ambiguity of the definition of slashed modules in the previous section can be lifted for well-behaved Brauer-friendly modules.

\begin{lem}
\label{lem:CanSlashMod}
Let $(Q,e_Q)$ be a subpair of $(D,e_D)$, and $H$ be a subgroup of $G$ such that $QC_G(Q)\leqslant H\leqslant N_G(Q,e_Q)$. Denote by $e_{PQ}$ the unique block of the group $C_G(PQ)$ such that $(PQ,e_{PQ})\leqslant (D,e_D)$. Let $M$ be a Brauer-friendly $\ringO Ge$-module.
Assume that the slashed module $M\sla{P,e_P}$ is a $p$-permutation $kC_G(P)\bar e_P$-module, and that a slashed module $M\sla{PQ,e_{PQ}}$ is non-zero. Then there exists a unique isomorphism class of $(Q,e_Q)$-slashed module $M\sla{Q,e_Q}$ over the group $H$ such that
\[
M\sla{Q,e_Q}\sla{PQ,\bar e_{PQ}} \simeq \Br_{(PQ,\bar e_{PQ})} (M\sla{P,e_P}).
\] 
\end{lem}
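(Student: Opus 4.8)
The plan is to construct the required $(Q,e_Q)$-slashed module by applying Lemma~\ref{lem:SlashMod} to obtain \emph{some} $(Q,e_Q)$-slashed module $M\sla{Q,e_Q}$ over $H$, then to adjust it by a linear character so that the transitivity lemma (Lemma~\ref{lem:TranSlash}) forces the desired identification after further slashing at $(PQ,\bar e_{PQ})$; uniqueness will come from tracking exactly how much freedom of choice remains. Throughout I will use the hypothesis that $C_G(P)$ controls the $e$-fusion with respect to $(D,e_D)$, which (as recalled just before the lemma) gives $N_G(P,e_P)=C_G(P)$ and an equivalence of Brauer categories; this is what makes the various normaliser subgroups behave and what pins down the relevant linear characters.

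First I would fix a $(P,e_P)$-slashed module $M\sla{P,e_P}$ over $C_G(P)=N_G(P,e_P)$, which by hypothesis is a $p$-permutation $kC_G(P)\bar e_P$-module, hence has a \emph{canonical} slashed module at every further subpair, namely its Brauer quotient; in particular $\Br_{(PQ,\bar e_{PQ})}(M\sla{P,e_P})$ is well-defined without ambiguity, which is why it can legitimately appear on the right-hand side of the asserted isomorphism. Next, I would choose any $(Q,e_Q)$-slashed module $(M_0,\theta_0)$ attached to $M$ over $H$ (Lemma~\ref{lem:SlashMod}(i)); by Lemma~\ref{lem:TranSlash}, slashing $M_0$ further at $(PQ,\bar e_{PQ})$ (note $PC_G(P)$ and $QC_G(Q)$ both lie in the relevant normalisers, and $PQ\leqslant D\leqslant C_G(P)\cap H$, so the hypotheses of that lemma are met after passing to $K=C_G(PQ)$ or a suitable intermediate group) yields a $(PQ,e_{PQ})$-slashed module of $M$, which by transitivity applied the \emph{other} way — slash first at $(P,e_P)$, then at $(PQ,\bar e_{PQ})$ — must agree, up to the linear-character ambiguity of Lemma~\ref{lem:SlashMod}(ii), with $\Br_{(PQ,\bar e_{PQ})}(M\sla{P,e_P})$. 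So $M_0\sla{PQ,\bar e_{PQ}}$ and $\Br_{(PQ,\bar e_{PQ})}(M\sla{P,e_P})$ differ by twisting by some $\chi:N_G(PQ,e_{PQ})/PQ\,C_G(PQ)\to k^\times$. Since $M\sla{PQ,e_{PQ}}$ is assumed non-zero, this twisting is detected.

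The key step — and the main obstacle — is to promote the character $\chi$ living on (a quotient of) $N_G(PQ,e_{PQ})$ to a character $\tilde\chi$ on $H/QC_G(Q)$ whose restriction produces exactly $\chi$, and then to replace $(M_0,\theta_0)$ by $(\tilde\chi_* M_0,\theta_0)$; twisting a $(Q,e_Q)$-slashed module over $H$ by a linear character of $H/QC_G(Q)$ again gives a $(Q,e_Q)$-slashed module (this is the content of Lemma~\ref{lem:SlashMod}(ii) run in reverse), and by construction its further slash at $(PQ,\bar e_{PQ})$ now matches $\Br_{(PQ,\bar e_{PQ})}(M\sla{P,e_P})$ on the nose. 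The subtlety is that $\chi$ is a priori only defined on $N_G(PQ,e_{PQ})$, not on $H\supseteq N_H(Q,e_Q)$ — here is exactly where fusion control enters: because $C_G(P)$ controls the $e$-fusion, one has $N_G(PQ,e_{PQ})\leqslant C_G(PQ)\,C_G(P)$, and more relevantly the whole subpair poset below $(D,e_D)$ sees its normalisers absorbed into $C_G(P)$, which is what lets one see that the twist coming from the two iterated slashes is actually controlled by a character that extends over $H$; I would make this precise by a diagram chase comparing the interior-algebra isomorphisms $\theta_0$, $\theta_1$, $\theta_1'$ from the discussion preceding Lemma~\ref{lem:TranSlash}.

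Finally, for uniqueness: suppose $(M_0,\theta_0)$ and $(M_0',\theta_0')$ are two $(Q,e_Q)$-slashed modules over $H$ both satisfying the displayed isomorphism. By Lemma~\ref{lem:SlashMod}(ii) they differ by a twist by some $\chi':H/QC_G(Q)\to k^\times$; slashing further at $(PQ,\bar e_{PQ})$ and using Lemma~\ref{lem:TranSlash} on both sides, the hypothesis that both further-slash to the \emph{same} $p$-permutation module $\Br_{(PQ,\bar e_{PQ})}(M\sla{P,e_P})$ shows that the image of $\chi'$ under the restriction $H/QC_G(Q)\to N_G(PQ,e_{PQ})/PQ\,C_G(PQ)$ is trivial; again invoking fusion control — which here forces $H=N_H(Q,e_Q)$ to be generated, modulo $QC_G(Q)$, by subgroups that do map into $N_G(PQ,e_{PQ})$ for suitable choices of $P$-conjugates, because $C_G(P)$ controls fusion of all the subpairs involved — one deduces $\chi'=1$, so $M_0\simeq M_0'$ as $kH\bar e_Q$-modules, and hence the isomorphism class of $M\sla{Q,e_Q}$ is unique. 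I expect the routine but careful part to be checking that all the subgroup inclusions needed to invoke Lemmas~\ref{lem:SlashMod} and~\ref{lem:TranSlash} hold ($QC_G(Q)\leqslant H\leqslant N_G(Q,e_Q)$ is given; one needs the analogous chain for $PQ$), and the genuinely delicate part to be the character-extension argument, where the precise form of the fusion-control hypothesis is indispensable.
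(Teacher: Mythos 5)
Your overall strategy is the same as the paper's: take an arbitrary $(Q,e_Q)$-slashed module $(M_0,\theta_0)$ over $H$, compare the two iterated slashes of $M$ at $(PQ,e_{PQ})$ via the transitivity of Lemma~\ref{lem:TranSlash}, identify the discrepancy as a linear-character twist via Lemma~\ref{lem:SlashMod}(ii), and use fusion control to move that character between the two relevant quotients. But the step you yourself flag as ``genuinely delicate'' -- transferring the character to $H$ -- is exactly the content of the lemma, and your proposal neither proves it nor frames it in a way that would work as stated. You place the ambiguity character $\chi$ on $N_G(PQ,e_{PQ})/PQ\,C_G(PQ)$ and then pose an \emph{extension} problem from $N_G(PQ,e_{PQ})$ to $H$; but $N_G(PQ,e_{PQ})$ need not be contained in $H$ (nor contain it), so there is no restriction map $H/QC_G(Q)\to N_G(PQ,e_{PQ})/PQ\,C_G(PQ)$ of the kind your uniqueness argument invokes, and your existence argument would require extending a character along a subgroup that is not a subgroup of the ambient group. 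The comparison of the two iterated slashes actually takes place over $C_H(P)=N_H(P,e_{PQ})\leqslant H$: slash $M_0$ at $(P,e_{PQ})$ over $C_H(P)$, and restrict $\Br_{(PQ,\bar e_{PQ})}(M\sla{P,e_P})$ to $C_H(P)$; both become $(PQ,e_{PQ})$-slashed modules of $M$ over $C_H(P)$, so the character lives on $C_H(P)/C_G(PQ)$ (the $p$-part $PQ$ is irrelevant for $k^\times$-valued characters).

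The missing fact, which fusion control gives in one line and which your ``generation by suitable $P$-conjugates'' gesture does not substitute for, is that the inclusion $C_H(P)\hookrightarrow H$ induces an isomorphism $C_H(P)/C_G(PQ)\simeq H/C_G(Q)$: injectivity because $C_H(P)\cap C_G(Q)=C_G(PQ)$ (using $C_G(PQ)\leqslant C_G(Q)\leqslant H$), surjectivity because $H\leqslant N_G(Q,e_Q)\leqslant C_G(Q)\,C_G(P)$ forces $H=C_G(Q)\,C_H(P)$. With this identification there is no extension problem at all: twisting the non-zero module $M_1$ by a character of $C_H(P)/C_G(PQ)$ is literally the same as twisting $M_0$ by the corresponding character of $H/C_G(Q)$, which yields existence and uniqueness simultaneously (in particular your separate uniqueness paragraph, which again routes through $N_G(PQ,e_{PQ})$, is unnecessary once the quotient isomorphism is in hand). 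So the architecture of your proof is right, but without this isomorphism the central step remains an unproven claim, and the route you sketch toward it would not go through as written.
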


\begin{proof}
Let the pair $(M_0,\theta_0)$ be any $(Q,e_Q)$-slashed module over the group $H$ attached to $M$. Let the pair $(M_1,\theta_1)$ be a $(P,e_{PQ})$-slashed module attached to $M_0$. This slashed module is defined over the centraliser $C_H(P) = N_H(P,e_{PQ})$, hence it is uniquely defined up to isomorphism. Let the pair $(M_2,\theta_2)$ be a $(P,e_P)$-slashed module attached to $M$. Similarly, this slashed module is uniquely defined over the centraliser $C_G(P)$. Set $M_3= \Br_{(PQ,\bar e_{PQ})}(M_2)$, restricted to a $kC_H(P)$-module, and let $\theta_3 : \Br_{PQ}(\Endom_\ringO(\bar e_{PQ}M_2)) \simeq \Endom_k(M_3)$ be the natural isomorphism.

As in the discussion before Lemma \ref{lem:TranSlash}, define from $\theta_0$ an $\theta_1$ a map $\theta'_1$ such that the pair $(M_1,\theta'_1)$ is a $(PQ,e_{PQ})$-slashed module attached to $M$ over the group $C_H(P)$. Define similarly from $\theta_2$ and $\theta_3$ an isomorphism $\theta'_3$ such that the the pair $(M_3,\theta'_3)$ is a $(PQ,e_{PQ})$-slashed module attached to $M$ over the group $C_H(P)$.

By Lemma \ref{lem:SlashMod} (ii), there exists a linear character $\chi:C_H(P)/C_G(PQ)\to k^\times$ and an isomorphism of slashed modules $(\chi_*M_1, \theta'_1)\simeq (M_3, \theta'_3)$. By control of fusion, the inclusion map $C_H(P)\to H$ induces an isomorphism $C_H(P)/C_G(PQ)\simeq H/C_G(Q)$. Thus twisting the non-zero $kC_H(P)\bar e_{PQ}$-module $M_1$ by a linear character of the group $C_H(P)/C_G(PQ)$ amounts to twisting the $kH\bar e_{Q}$-module $M_0$ by a linear character of the group $H/C_G(Q)$. This proves the existence and uniqueness, up to isomorphism, of a pair $(M_0,\theta_0)$ such that the corresponding $(PQ,e_{PQ})$-slashed module $(M_1, \theta'_1)$ is isomorphic to $(M_3, \theta'_3)$.
\end{proof}

In the next lemma, the normal subgroup $H$ could be the centraliser of a normal $e$-subpair of $G$, or just $G$ itself.

\begin{lem}
\label{lem:UnicM}
Let $V$ be a capped indecomposable endo-permutation $\ringO D$-module such that the class $\Defres^D_{D/P} [V]$ in the Dade group $\dade(D/P)$ is trivial. Assume that the triple $(D,e_D,V)$ is fusion-stable in $G$, and identify $V$ to an $\ringO\Delta D$-module through the diagonal isomorphism.
Let $H$ be a normal subgroup of $G$ such that, for any subpair $(Q,e_Q)\leqslant (D,e_D)$, the idempotent $e_Q$ lies in the subalgebra $kC_H(Q)$ of $kC_G(Q)$. 
Then, up to isomorphism, there is a unique Brauer-friendly $\ringO(H\times C_H(P))\Delta C_G(P)$-module $M$ with source triple $(\Delta D,e_D\otimes e_D^\bop, V)$ such that a slashed module $M\sla{\Delta P,e_P\otimes e_P^\bop}$ admits a direct summand isomorphic to the $k(C_H(P)\times C_H(P))\Delta C_G(P)$-module $kC_H(P)e_P$.
\end{lem}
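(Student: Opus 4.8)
The plan is to establish existence and uniqueness separately, with uniqueness following from the general rigidity of Brauer-friendly modules and existence from a lifting argument using Lemma~\ref{lem:DirSumdLift}. Throughout, write $\hat G = (H\times C_H(P))\Delta C_G(P)$, so that $\ringO\hat G$ is the ambient algebra; note that $\Delta D$ is contained in $\hat G$ because $D\leqslant C_G(P)$ by the control-of-fusion hypothesis (which forces $N_G(P,e_P)=C_G(P)\supseteq D$), and that $e_D\otimes e_D^\bop$ is a block of $C_{\hat G}(\Delta D)=C_H(D)\times C_H(D)$, so $(\Delta D, e_D\otimes e_D^\bop)$ is a genuine subpair of $\hat G$. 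The hypothesis that $e_Q\in kC_H(Q)$ for all $(Q,e_Q)\leqslant(D,e_D)$ is what makes $e_D\otimes e_D^\bop$ actually live in $\ringO\hat G$ rather than merely in $\ringO(G\times C_G(P))\Delta C_G(P)$, and it ensures that the whole Brauer category $\catbr(\hat G, e_D\otimes e_D^\bop)$ is governed by the subpairs contained in $(\Delta D, e_D\otimes e_D^\bop)$.

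For \emph{uniqueness}: since $\Defres^D_{D/P}[V]$ is trivial in $\dade(D/P)$, the $\Delta P$-slashed module of any candidate $M$ with source triple $(\Delta D, e_D\otimes e_D^\bop, V)$ has a predictable source — roughly, the image of $V$ under the slash construction along $\Delta P$, which by triviality of the deflation-restriction has trivial source, so $M\sla{\Delta P, e_P\otimes e_P^\bop}$ is a $p$-permutation $k(C_H(P)\times C_H(P))\Delta C_G(P)$-module. By Lemma~\ref{lem:SlashMod}(iii) and the fact that $(P,e_P)$ is fully normalised (so the slash is over $C_G(P)$ and hence essentially canonical, up to a linear character of $C_G(P)/PC_G(P)=1$), this slashed module is determined up to isomorphism; the prescription that it admit a summand isomorphic to $kC_H(P)e_P$ then pins down one indecomposable summand of $M$. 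Two modules $M, M'$ with the same source triple and isomorphic $\Delta P$-slashed summand must then agree: one runs the standard argument that a Brauer-friendly module is determined by its vertex subpair, its source, and its slash at a fully normalised subpair — this is exactly the content of the "well-behaved" rigidity in \cite{Biland2013}, combined with Lemma~\ref{lem:CanSlashMod} to propagate the identification to all intermediate subpairs $(PQ, e_{PQ})$.

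For \emph{existence}: the strategy is to build $M$ as the unique indecomposable summand, with vertex subpair $(\Delta D, e_D\otimes e_D^\bop)$ and source $V$, of an explicit larger module whose $\Delta P$-slash is visibly $kC_H(P)e_P$. Concretely, I would start from the block algebra $kC_H(P)e_P$, viewed as a $k(C_H(P)\times C_H(P))\Delta C_G(P)$-module via left/right multiplication and the conjugation action of $C_G(P)$; this is the prescribed $\Delta P$-slashed module. One then checks it has a direct summand with vertex $\Delta D$ (the defect group of $e_P$ on each side forces this, since $D$ is abelian or has the connectivity property — though actually at this stage one only needs $\Delta D$ to be a vertex, which holds because $e_D\otimes e_D^\bop$ has defect group $\Delta D$ in $C_H(P)\times C_H(P)$ up to the fusion being controlled). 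By Lemma~\ref{lem:DirSumdLift} applied with the subpair $(\Delta P, e_P\otimes e_P^\bop)$ of $\hat G$, the module $\ringO\hat G(e_D\otimes e_D^\bop)$ — or a suitable induced module built from $V$ over $\ringO\Delta D$ — admits an indecomposable direct summand $M$ with vertex subpair $(\Delta P, e_P\otimes e_P^\bop)$ whose slash recovers the chosen summand; by Nagao's theorem and the fusion-stability of $(\Delta D, e_D\otimes e_D^\bop, V)$, one upgrades the vertex subpair of $M$ to $(\Delta D, e_D\otimes e_D^\bop)$ with source $V$, using Lemma~\ref{lem:SlashMod}(iii) to match sources. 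Brauer-friendliness of $M$ is inherited from that of the ambient module, which is Brauer-friendly because $V$ is a fusion-stable endo-permutation source.

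The main obstacle I anticipate is the bookkeeping in the existence half: producing an ambient Brauer-friendly $\ringO\hat G$-module that simultaneously (a) has source triple exactly $(\Delta D, e_D\otimes e_D^\bop, V)$ on the relevant summand — which requires knowing that an $\ringO\Delta D$-module with source $V$ induces up to something in the right block — and (b) has $\Delta P$-slash containing $kC_H(P)e_P$. The cleanest route is probably to take $M$ to be the indecomposable summand of $\Ind_{\Delta D}^{\hat G}(V\otimes e_D\otimes e_D^\bop)$ lying over the block $e\otimes e_P^\bop$ (or its $H$-stable version) that has full vertex $\Delta D$, and then to verify (b) by a slash computation using Lemma~\ref{lem:TranSlash} to factor the $\Delta P$-slash through the $\Delta D$-to-$\Delta P$ slash of $V$, which by $\Defres^D_{D/P}[V]=0$ contributes only a trivial-source twist. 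Verifying that this summand is unique with the stated properties — i.e.\ that no linear-character ambiguity survives — uses $N_G(P,e_P)=C_G(P)$, so $H/PC_H(P)$ has no nontrivial $p'$-part acting, killing the twist in Lemma~\ref{lem:SlashMod}(ii).
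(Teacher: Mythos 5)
There is a genuine gap, and it is the same gap in both halves of your argument: you never use (or prove a substitute for) the correspondence theorem \cite[Theorem 20]{Biland2013}, which is the engine of the paper's proof. The paper argues as follows: since $\Defres^D_{D/P}[V]$ is trivial, Lemma \ref{lem:SlashMod}~(iii) makes $M\sla{\Delta P,e_P\otimes e_P^\bop}$ a $p$-permutation module for every candidate $M$, so by Lemma \ref{lem:CanSlashMod} one may define canonically $M\sla{\Delta D,e_D\otimes e_D^\bop}=\Br_{(\Delta D,e_D\otimes e_D^\bop)}(M\sla{\Delta P})$; then \cite[Theorem 20]{Biland2013} gives a bijection between indecomposable Brauer-friendly modules with source triple $(\Delta D,e_D\otimes e_D^\bop,V)$ and indecomposable Brauer-friendly modules over the normaliser of the maximal subpair with trivial source, and a second application of the same bijection (to $k(C_H(P)\times C_H(P))\Delta C_G(P)$-modules with trivial source) shows that ``$M\sla{\Delta P}$ has $kC_H(P)e_P$ as a summand'' is equivalent to ``$M\sla{\Delta D}\simeq kC_H(D)e_D$''. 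Existence and uniqueness then drop out simultaneously. Your uniqueness step instead asserts that ``a Brauer-friendly module is determined by its vertex subpair, its source, and its slash at a fully normalised subpair''; determination by the slash at the \emph{non-maximal} subpair $(\Delta P,e_P\otimes e_P^\bop)$ is not a general fact and is essentially the statement to be proved, so as written the argument is circular. What is true, and what the paper uses, is determination by the slash at the \emph{vertex} subpair $(\Delta D,e_D\otimes e_D^\bop)$, and the passage from the $\Delta P$-slash condition to the $\Delta D$-slash condition is exactly the content of the second application of the correspondence (made legitimate by the canonical slash of Lemma \ref{lem:CanSlashMod}); that bridge is missing from your proposal.

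Your existence half has the same problem plus some concrete errors. An induced module such as $\Ind_{\Delta D}^{\hat G}(V)$ cut by the block generally has \emph{many} indecomposable summands with vertex subpair $(\Delta D,e_D\otimes e_D^\bop)$ and source $V$, so ``the summand with full vertex $\Delta D$'' is not well defined; singling out the one whose $\Delta P$-slash contains $kC_H(P)e_P$ is precisely what the correspondence accomplishes, and your proposal leaves it as ``the main obstacle''. Moreover, Lemma \ref{lem:DirSumdLift} produces a summand with vertex subpair $(\Delta P,e_P\otimes e_P^\bop)$, which is not what is wanted here (the required vertex subpair is $(\Delta D,e_D\otimes e_D^\bop)$, and there is no ``upgrading via Nagao'' step that turns one into the other); and your appeal to ``$D$ abelian or the connectivity property'' imports hypotheses of Theorem \ref{thm:stable equivalence} that are not assumed in Lemma \ref{lem:UnicM}. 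The correct observations you do make (the $\Delta P$-slash is $p$-permutation because the deflated Dade class is trivial; Lemma \ref{lem:CanSlashMod} removes the linear-character ambiguity) are the right preliminaries, but without the Green/Puig-type bijection of \cite[Theorem 20]{Biland2013} — or an argument replacing it — neither existence nor uniqueness is established.
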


\begin{proof}
By Lemma \ref{lem:SlashMod} (iii), whenever $M$ is an indecomposable Brauer-friendly $k(H\times C_H(P))\Delta G$-module with source triple $(\Delta D,e_D\otimes e_D^\bop,V)$, the slashed module $M\sla{\Delta P,e_P\otimes e_P^\bop}$ is a $p$-permutation module. Thus we can set
\[
M'\sla{\Delta D,e_D\otimes e_D^\bop} = \Br_{(\Delta D,e_D\otimes e_D^\bop)}
(M'\sla{\Delta P}),
\]
in accordance with Lemma \ref{lem:CanSlashMod}. Once this specific slash construction has been chosen, we know from \cite[Theorem 20]{Biland2013} that the mapping $M\mapsto M\sla{\Delta D,e_D\otimes e_D^\bop}$ induces a one-to-one correspondence between the isomorphism classes of indecomposable Brauer-friendly $k(H\times C_H(P))\Delta G$-modules with source triple $(\Delta D,e_D\otimes e_D^\bop,V)$ and the the isomorphism classes of indecomposable Brauer-friendly $k(C_H(D)\times C_H(D))\Delta N_G(D,e_D)$-modules with source triple $(\Delta D,e_D\otimes e_D^\bop,k)$. The same correspondence, applied to $k(C_H(P)\times C_H(P))\Delta C_G(P)$-modules with source triple $(\Delta D,e_D\otimes e_D^\bop,k)$, implies that the slashed module $M\sla{\Delta P,e_P\otimes e_P^\bop}$ admits $kC_H(P)e_P$ as a direct summand if, and only if, $M\sla{\Delta D,e_D\otimes e_D^\bop } \simeq kC_H(D)e_D$. This proves the lemma.
\end{proof}

\section{Understanding the local situation}
\label{sec:local case}

In this section, we work directly over the residue field $k$, \emph{i.e.}, we set $\ringO=k$.
We explore an equivariant version of Morita equivalences, the existence of which is proven in \cite{KuelshammerRobinson}. Those are the building blocks that we will glue together to obtain a stable equivalence in the Section \ref{sec:GlobalCase}. 

Let us fix a few notations that will hold throughout the present section. Let $P$ be a $p$-subgroup of a finite group $G$, and $e$ be a block of $G$ such that $\br_P(e)\neq0$. We choose, once and for all, a maximal $e$-subpair $(D,e_D)$ such that $P\leqslant D$. For any subgroup $Q$ of $D$, we denote by $e_Q$ the unique block of $C_G(Q)$ such that $(Q,e_Q)\leqslant (D,e_D)$. Let $H$ be a normal subgroup of $G$ such that, for any subgroup $Q$ of $D$, the block $e_Q$ of the group algebra $kC_G(Q)$ lies in the subalgebra $kC_H(Q)$ (for instance, $H$ may be the centraliser of a normal $e$-subpair of $G$). Assume that
\[
G \ = \ O_{p'}(H) \ C_G(P).
\]
By elementary group theory, this factorisation implies that $P$ is an abelian $p$-group, and that the centraliser $C_G(P)$ controls the $p$-fusion in the group $G$. 
 In particular, we have $N_G(D)\leqslant C_G(P)$. By Brauer's first main theorem, it follows that the idempotent $e_P=\br_P(e)$ is a block of the group $C_G(P)$. Thus $(P,e_P)\leqslant (D,e_D)$ is an $e$-subpair of the group $G$, and the centraliser $C_G(P)$ controls the $e$-fusion with respect to the maximal subpair $(D,e_D)$.
The main result of this section is the following.

\begin{thm}
\label{thm:local equivariant Morita equivalence}
With the above notations,
\begin{enumerate}[(i)]
\item
Let $S$ be any $p'$-subgroup of $H$ such that $S\trianglelefteqslant G$ and $G=SC_G(P)$; let $b$ be any $D$-stable block of the group $S$ such that $e_D\br_D(b)\neq 0$. The $\Delta D$-algebra $kSb\otimes kC_S(P)\br_P(b)^\bop$ defines a class $v$ in the Dade group $\dade(\Delta D)$ that is independent of the choice of $S$ and~$b$.
\item
Let $V$ be a capped indecomposable endo-permutation $k\Delta D$-module that belongs to the class $v$. The source triple $(\Delta D,e_D\otimes e_D^\bop,V)$ is fusion-stable in the group $(H\times C_H(P))\Delta C_G(P)$.
\item There exists a unique indecomposable 
$k(H\times C_H(P))\Delta C_G(P)$-module $M$ with source triple $(\Delta D,e_D\otimes e_D^\bop,V)$ such that the slashed module $M\sla{\Delta P}$ is isomorphic to the $k(C_H(P)\times C_H(P))\Delta C_G(P)$-module $kC_H(P)e_P$.
\item
The module $M$ induces a $G/H$-equivariant Morita equivalence
$kHe\ \sim \ kC_H(P)e_P$.
\end{enumerate}
\end{thm}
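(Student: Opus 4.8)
The plan is to prove the four items essentially in the order stated, bootstrapping from the known non-equivariant Morita equivalence of Külshammer--Robinson.

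\medskip

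\noindent\textbf{Setting up the Dade class $v$ (item (i)).} The factorisation $G = O_{p'}(H)\,C_G(P)$ ensures $P$ is abelian and $C_G(P)$ controls the $p$-fusion; the point is that for a normal $p'$-subgroup $S$ with $G = SC_G(P)$ and a $D$-stable block $b$ of $S$ with $e_D\br_D(b)\neq 0$, the group algebra $kSb$ is a $D$-interior (in fact $\Delta D$-interior via the diagonal, since $S$ is a $p'$-group so $kSb$ is a $p$-permutation $\Delta D$-algebra) algebra, and $kSb \otimes kC_S(P)\br_P(b)^{\bop}$ is an endo-permutation $k\Delta D$-algebra. Its class $v\in\dade(\Delta D)$ must be shown independent of $(S,b)$. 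I would first reduce to a minimal such $S$ (e.g.\ $S = O_{p'}(H)$ or $O_{p'}(G)\cap H$ intersected with the relevant block support), then compare two choices $S_1\leqslant S_2$ by noting that $kS_2 b_2$ is an induced algebra $\Ind_{(S_1\times S_1)\Delta G_{b_1}}^{(S_2\times S_2)\Delta G}kS_1b_1$ in the sense of the partly-interior-algebra formalism recalled in Section~\ref{sec:GenDef}; taking Brauer quotients and sources of the associated bimodules shows the Dade classes agree. The relation with $b$ versus its $G$-conjugates is handled the same way via the trace formula $b' = \Tr_{G_b}^G(b)$ and the induced-algebra isomorphism $\ringO Sb' \simeq \Ind\,\ringO Tb$.

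\medskip

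\noindent\textbf{Fusion-stability (item (ii)).} Since $C_G(P)$ controls the $e$-fusion with respect to $(D,e_D)$, the Brauer categories $\catbr(G,e)$ and $\catbr(C_G(P),e_P)$ are equivalent, so every arrow $(\Delta Q, \cdot)\to(\Delta D, e_D\otimes e_D^{\bop})$ in the Brauer category of $(H\times C_H(P))\Delta C_G(P)$ acting on the relevant block is, up to conjugacy, induced by a morphism already living in $C_G(P)$, hence respects the diagonal. The class $v$ is ``intrinsically'' defined from the $D$-interior algebra $kSb$, which is itself fusion-stable because $S\trianglelefteqslant G$ and $b$ is $D$-stable; restricting $V$ along two parallel arrows then yields two summands of the same endo-permutation restriction of the algebra $kSb\otimes kC_S(P)\br_P(b)^{\bop}$, whose sum is again endo-permutation. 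So (ii) follows from (i) together with control of fusion.

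\medskip

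\noindent\textbf{Constructing $M$ (item (iii)).} Apply Lemma~\ref{lem:UnicM} with this $H$ and $V$: one must check that $\Defres^D_{D/P}[V]$ is trivial in $\dade(D/P)$. But $V$ comes from the $p$-permutation algebra $kSb\otimes\cdots$, and deflation--restriction of a $p$-permutation source to $D/P$ is trivial (the Brauer quotient at $P$ of that algebra is $kC_S(P)\br_P(b)\otimes kC_S(P)\br_P(b)^{\bop}$, whose class is trivial). The hypothesis of Lemma~\ref{lem:UnicM} on the idempotents $e_Q\in kC_H(Q)$ is exactly the standing assumption on $H$, and the required summand $kC_H(P)e_P$ of the slashed module is built into the statement. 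This gives existence and uniqueness of $M$.

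\medskip

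\noindent\textbf{The equivariant Morita equivalence (item (iv)).} This is the main obstacle. I would argue that $M$, viewed as an $(kHe, kC_H(P)e_P)$-bimodule, induces a Morita equivalence and that the conjugation action of $G$ descends to an action of $G/H$ compatible with it. For the Morita part: Külshammer--Robinson give, over $k$, a Morita equivalence $kG e \sim kC_G(P)e_P$ coming from the factorisation $G = O_{p'}(G)C_G(P)$; restricting scalars along $H\hookrightarrow G$, one gets the non-equivariant equivalence $kHe\sim kC_H(P)e_P$. The content of (iv) is that the specific bimodule $M$ constructed via the slash/source machinery \emph{is} (a form of) this equivalence, and that it carries a $\Delta G$-structure, i.e.\ is $(H\times C_H(P))\Delta C_G(P)$-stable with the correct diagonal $C_G(P)$-action --- which is precisely how $M$ was built. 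To identify $M$ with the Morita bimodule I would compute a slashed module: by construction $M\sla{\Delta P} \simeq kC_H(P)e_P$, the identity bimodule for $C_H(P)$; pushing this through the fact (Nagao/Green-correspondence at the vertex subpair $(\Delta D, e_D\otimes e_D^{\bop})$ together with the source $V$ realizing the class $v$ of the algebra $kSb\otimes kC_S(P)\br_P(b)^{\bop}$) that $M$ is the Green correspondent of the genuine Morita bimodule $kSb$-twist, one concludes $M\otimes_{kC_H(P)e_P} M^{\vee}$ and $M^{\vee}\otimes_{kHe}M$ have the identity bimodules as summands and no other summands, by a rank/vertex count. The $G/H$-equivariance then says these isomorphisms can be chosen $\Delta(G/H)$-equivariantly, which follows from the uniqueness clause in (iii): both $M\otimes M^{\vee}$ and $kHe$ are Brauer-friendly modules with the matching source triple and slashed module, so Lemma~\ref{lem:UnicM} forces an isomorphism, and uniqueness upgrades it to a $G$-equivariant one. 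The delicate point throughout is bookkeeping the interior/diagonal structures so that ``the'' Morita bimodule of Külshammer--Robinson is exactly the $M$ produced abstractly --- matching source triples and one slashed module is what pins this down.
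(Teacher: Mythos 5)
There is a genuine gap, and it sits exactly where the paper's real work is done: item (iv), together with the vertex/source claims it silently relies on. Your plan is to invoke K\"ulshammer--Robinson for $G$ and then get $kHe\sim kC_H(P)e_P$ ``by restricting scalars along $H\hookrightarrow G$'' --- but Morita equivalences do not restrict along subgroup inclusions, and the correct mechanism goes in the opposite direction: one starts from the normal $p'$-subgroup $S$, where $kSb$ is a matrix algebra and the equivalence $kSb\sim kC_S(P)b_P$ is trivial, and extends it to $H$ and to the $G$-action using Fong--Reynolds type results (Marcus, Harris), after cutting by $e$. More seriously, your identification of the abstractly constructed $M$ of Lemma \ref{lem:UnicM} with ``the'' Morita bimodule is asserted rather than proven: to know that the Morita bimodule is Brauer-friendly with the \emph{diagonal} vertex subpair $(\Delta D, e_D\otimes e_D^\bop)$ and source $V$ in the class of $kSb\otimes kC_S(P)b_P^\bop$ is precisely the hard content, and your ``rank/vertex count'' on $M\otimes M^\vee$ and the appeal to Lemma \ref{lem:UnicM} for $M\otimes M^\vee\simeq kHe$ are circular (Lemma \ref{lem:UnicM} does not even apply to $(kHe,kHe)$-bimodules, and ruling out extra projective summands --- i.e.\ Morita rather than stable equivalence --- needs an argument you do not give).

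The missing idea is the paper's Lemma \ref{lem:mysterious interior structure}: using the invertibility of the class sums $K_xe$ (proved via the Brauer map onto $Z(kC_S(P)Pb_P)=kPb_P$), one equips $kHe$ with an action of the extra $p$-group $P_1=\{(x,x^{-1})\}$, making it a module over $(HP_1\times HP_1)\Delta G$. One then shows (via the matrix-algebra structure of $kSb$, Urfer's criterion, and the induced-algebra isomorphism $kHb'\simeq\Ind\, kSb$) that this module has vertex $(P_1\times P_1)\Delta D$ and source a summand of $kSb$, and defines $M$ as the slashed module $kHe\sla{1\times P_1}$; this single construction simultaneously yields the source triple $(\Delta D,e_D\otimes e_D^\bop,V)$, the independence statement (i) (the class is the source class of an intrinsically defined module, rather than your sketched comparison of pairs $(S_1,b_1)\leqslant(S_2,b_2)$), the fusion-stability (ii) (where your argument also glosses over the fact that fusion morphisms in $C_G(P)$ need not stabilise $b$, which the paper handles through Lemmas \ref{lem:choice of the block b} and \ref{lem:block b fusion control}), and the equivariant Morita equivalence (iv). Your verification that $\Defres^D_{D/P}[V]$ is trivial, needed for Lemma \ref{lem:UnicM}, is correct, but without the $P_1$-interior structure and the explicit descent from $kSb$ your proposal does not reach (iv).
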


We reach the proof of this theorem through a series of lemmas. We choose, once and for all, a normal $p'$-subgroup $S$ of $H$ such that $S\triangleleft G$ and $G=SC_G(P)$. For instance, we could choose $S=O_{p'}(H)$. Let $b_D$ be a block of the algebra $kC_S(D)$ such that $e_Db_D\neq 0$, \emph{i.e.}, the block $e_D$ of $C_G(D)$ covers the block $b_D$ of the normal subgroup $C_S(D)$ (as defined in \cite{Dade1973}). 

We may consider $(D,b_D)$ as a maximal subpair of the nilpotent group $SD$. Let $b$ be the block of $SD$ such that $(D,e_D)$ is a maximal $b$-subpair. In other words, $b$ is a $D$-stable block of the $p'$-group $S$, and $b_D = \br_D(b)$. Similarly, for any subgroup $Q$ of $D$, the Brauer map $\br_Q$ defines a one-to-one correspondence between the set of $Q$-stable blocks of $S$ and the set of all blocks of $C_S(Q)$, by Brauer's first main theorem. In particular, the idempotent $b_Q=\br_Q(b)$ is a block of the group $C_S(Q)$.

\begin{lem}
\label{lem:choice of the block b}
For any subpair $(Q,e_Q)\leqslant (D,e_D)$, the block $e_Q$ of the group $C_G(Q)$ covers the block $b_Q=\br_Q(b)$ of the normal subgroup $C_S(Q)$. In particular, the block $e$ covers the block $b$.
\end{lem}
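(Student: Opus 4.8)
The plan is to prove Lemma \ref{lem:choice of the block b} by descending induction on the order of $Q$, starting from $Q=D$, where the covering statement $e_D b_D \neq 0$ holds by the very choice of $b_D$ (recall $b_D = \br_D(b)$). For the inductive step, I would fix a subpair $(Q,e_Q) \trianglelefteqslant (R,e_R)$ with $Q < R \leqslant D$, assume $e_R$ covers $b_R = \br_R(b)$, and deduce that $e_Q$ covers $b_Q = \br_Q(b)$. The natural tool is the compatibility of the Brauer map with the covering/containment relations: the relation $(Q,e_Q) \trianglelefteqslant (R,e_R)$ means that $(R,e_R)$ is an $e_Q$-subpair of $N_G(Q,e_Q)$, equivalently $\bar e_R \br_R(e_Q) \neq 0$ computed inside $k C_G(R)$; and likewise, inside the $p'$-group world, $(Q,b_Q) \trianglelefteqslant (R,b_R)$ automatically since $S D$ is nilpotent (all its subpairs are linearly ordered and the inclusion of maximal subpairs is forced), so $\bar b_R \br_R(b_Q) \neq 0$, in fact $b_R = \br_R(b_Q)$.

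The key step is then to relate the block $e_Q$ of $C_G(Q)$ to the block $e_R$ of $C_G(R)$ through the normal subgroup chain $C_S(Q) \trianglelefteqslant C_G(Q)$ and $C_S(R) \trianglelefteqslant C_G(R)$, using that $C_G(R) \leqslant N_{C_G(Q)}(R)$ acts on $C_S(Q)$ and $C_S(R) = C_S(Q) \cap C_G(R)$. Concretely, I would argue: $e_Q$ covers a unique $C_G(Q)$-orbit of blocks of $C_S(Q)$; I want that orbit to contain $b_Q$. Since $b_Q$ is $D$-stable (indeed $Q$-stable and more — it is the unique block of $C_S(Q)$ below the $D$-stable block $b$ of $S$), and since $D \leqslant C_G(Q)$ hmm — wait, $D$ need not centralise $Q$. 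Rather, $b_Q$ is stabilised by $N_D(Q) \leqslant C_G(Q) \cap N_G(D)$, which by the factorisation $G = O_{p'}(H) C_G(P)$ normalises things compatibly. The cleanest route: apply Brauer's first main theorem inside $C_G(Q)$ to the $p'$-subgroup $C_S(Q) \leqslant C_H(Q)$, which gives a bijection between $R/Q$-stable (or just $N_D(Q)/Q$-stable, or $R$-stable) blocks of $C_S(Q)$ and blocks of $C_S(R) = C_{C_S(Q)}(R)$; under this bijection $b_Q \mapsto \br_R(b_Q) = b_R$. So it suffices to show $e_Q$ covers the $R$-stable block of $C_S(Q)$ that $e_R$ determines. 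By Nagao-type / Knörr-type compatibility (or directly: $\br_R(e_Q)$ is a sum of blocks of $C_G(R)$ all conjugate under $N_G(Q)$, one of which is $e_R$; and $\br_R$ intertwines the covering of $C_S(Q)$-blocks by $C_G(Q)$-blocks with the covering of $C_S(R)$-blocks by $C_G(R)$-blocks because $C_S(R) = C_S(Q) \cap C_G(R)$ and $R$ normalises $C_S(Q)$), the block of $C_S(Q)$ covered by $e_Q$ reduces, via $\br_R$, to the block of $C_S(R)$ covered by $e_R$, namely $b_R$ by the inductive hypothesis. Since $\br_R(b_Q) = b_R$ and the correspondence is a bijection on $R$-stable blocks, this forces $e_Q$ to cover $b_Q$, completing the induction. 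The final assertion — $e$ covers $b$ — is the case $Q = 1$, where $C_G(1) = G$, $C_S(1) = S$, $b_1 = b$ (a $D$-stable, hence the relevant, block of $S$), and $e_1 = e$.

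I would expect the main obstacle to be making rigorous the ``$\br_R$ intertwines covering relations'' step. One has to check carefully that, for the normal inclusion $C_S(Q) \trianglelefteqslant C_G(Q)$ and the subgroup $C_G(R) \cdot \langle R \rangle$ acting on it, applying the Brauer homomorphism $\br_R$ (with respect to the $p$-group $R$, on which $C_S(Q)$ sits as a $p'$-group of operators) sends a block $f$ of $C_G(Q)$ with $\br_R(f) \ni e_R$ to a situation where the unique $C_S(Q)$-block covered by $f$ maps to the unique $C_S(R)$-block covered by $e_R$. This is essentially the statement that Brauer correspondence and block covering commute for a fixed normal $p'$-subgroup, which is standard (it follows from Clifford theory of blocks together with the fact that $\br_R$ restricted to $(kC_S(Q))^R = kC_S(R)$ is the identity, together with $\br_R$ being a ring homomorphism on $(kC_G(Q))^R$), but the bookkeeping with the subpair data $(Q,e_Q) \trianglelefteqslant (R,e_R)$ and the precise groups over which everything is defined needs care. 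Once that commutation is in hand, the inductive argument is routine, and the nilpotency of $SD$ (which trivialises the $p'$-side subpair structure) removes any ambiguity on that side.
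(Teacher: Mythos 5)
Your proposal is correct in outline and follows the same skeleton as the paper: descending induction over the subpairs contained in $(D,e_D)$, base case $(D,e_D)$ where $e_Db_D\neq 0$ holds by the choice of $b_D$, and an inductive step through a normal containment $(Q,e_Q)\trianglelefteqslant(R,e_R)$ with $Q<R\leqslant D$ (the paper takes $(R,e_R)$ to be the normaliser subpair of $Q$ in $(D,e_D)$, which guarantees such an $R$ exists and that $b_Q$ is $R$-stable). Where you diverge is the mechanism of the step. The paper argues in two lines, with no Clifford theory and no bijection on $R$-stable blocks: the containment gives $\bar e_R\br_R(e_Q)=\bar e_R$, the inductive hypothesis gives $\bar e_Rb_R\neq0$, hence $\br_R(e_Q)b_R\neq0$; since $e_Q$ and $b_Q$ are $R$-fixed and $\br_R$ is multiplicative on $R$-fixed points with $\br_R(b_Q)=b_R$, one gets $\br_R(e_Qb_Q)=\br_R(e_Q)\,b_R\neq0$, so $e_Qb_Q\neq0$. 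In other words, one tests the covering of the specific block $b_Q$ directly, and never needs to know which covered block is $R$-stable.

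The step you flag as the main obstacle is indeed the only place your version needs repair as written: $e_Q$ covers a whole $C_G(Q)$-orbit of blocks of $C_S(Q)$, and its members need not be $R$-stable, so ``the block of $C_S(Q)$ covered by $e_Q$ reduces via $\br_R$'' is not yet meaningful. It can be patched along your lines: let $f$ be the sum of the covered orbit, so $e_Qf=e_Q$; applying $\br_R$ and cutting by the central idempotent $\bar e_R$ gives $\bar e_R\br_R(f)=\bar e_R\neq0$; the non-$R$-stable members of the orbit group into relative traces $\Tr_{R_0}^{R}$ from proper subgroups $R_0<R$ and are killed by $\br_R$, so some $R$-stable covered block $c$ satisfies $\bar e_R\br_R(c)\neq0$; since $e_R$ covers $b_R$ by induction, $\br_R(c)$ is $C_G(R)$-conjugate to $b_R$, and conjugating $c$ by an element of $C_G(R)\leqslant C_G(Q)$ keeps it covered by $e_Q$ and $R$-stable, after which your first-main-theorem bijection on $R$-stable blocks of $C_S(Q)$ forces it to equal $b_Q$. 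So your plan does close, at the cost of exactly the orbit bookkeeping you anticipated; the paper's computation $\br_R(e_Qb_Q)=\br_R(e_Q)b_R$ is the shortcut that makes all of it unnecessary. The base case and the specialisation $Q=1$ giving ``$e$ covers $b$'' are as in the paper.
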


\begin{proof}
By construction, the block $e_D$ covers the block $b_D$. We use descending induction to generalise this to any subpair of $(D,e_D)$. Let $(Q,e_Q)$ be a proper subpair of $(D,e_D)$. We assume that, for any subpair $(R,e_R)$ with $(Q,e_Q)<(R,e_R)\leqslant (D,e_D)$, the block $e_R$ of the group $C_G(R)$ covers the block $b_R$ of the normal subgroup $C_S(R)$. Then we let $(R,e_R)$ be the normaliser subpair of $Q$ in $(D,e_D)$, which strictly contains $(Q,e_Q)$. Thus we have $e_R\br_R(e_Q) = e_R$ and, by induction, $e_Rb_R\neq 0$. These imply $\br_R(e_Q)b_R\neq 0$, hence $\br_R(e_Qb_Q)=\br_R(e_Q)b_R\neq 0$. Thus we obtain $e_Qb_Q\neq0$ and the block $e_Q$ covers the block $b_Q$. This completes the induction step.
\end{proof}

We denote by $G_b$ and $H_b$ the stabilisers of the block $b$ in the groups $G$ and $H$ respectively, and by $b'=\Tr_{G_b}^G(b)$ the sum of all $G$-conjugates of $b$. Then $eb'=e$ and $e\in kHb'$; moreover $e_b=eb$ is a block of the algebra $kG_b$, it lies in the subalgebra $kH_b$, and the $(kHe,kH_be_b)$-bimodule $kHe_b$ induces a ($G_b/H_b$-equivariant) Morita equivalence $kHe\sim kH_be_b$, as is proven for example in \cite{Harris2007}.

\begin{lem}
\label{lem:block b fusion control}
The subgroup $C_{G_b}(P)$ controls the $e$-fusion in $G$ with respect to the maximal subpair $(D,e_D)$.
\end{lem}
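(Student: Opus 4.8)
The plan is to verify the two defining conditions of fusion control: that $D\leqslant C_{G_b}(P)$, and that $N_G(Q,e_Q)\leqslant C_G(Q)\,C_{G_b}(P)$ for every non-trivial $e$-subpair $(Q,e_Q)\leqslant(D,e_D)$. The first is immediate: since $b$ is a $D$-stable block of $S$, the defect group $D$ normalises $b$, so $D\leqslant G_b$; and $D\leqslant N_G(D)\leqslant C_G(P)$; hence $D\leqslant G_b\cap C_G(P)=C_{G_b}(P)$. Before the main point I would record the structure of $G_b$. The blocks of the $p'$-group $S$ are central idempotents of $kS$, hence fixed under $S$-conjugation, so $S\leqslant G_b$; combined with $G=SC_G(P)$ this gives $G_b=S\,C_{G_b}(P)$. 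As $S$ is a normal $p'$-subgroup of $G_b$, we get $G_b=O_{p'}(G_b)\,C_{G_b}(P)$, so the same elementary group theory as was applied to $G$ shows that $C_{G_b}(P)$ contains a Sylow $p$-subgroup of $G_b$ and controls the $p$-fusion in $G_b$. Since $D$ is a $p$-subgroup of $C_{G_b}(P)$, after conjugating inside $C_{G_b}(P)$ one may assume this Sylow $p$-subgroup contains $D$, so that
\[
N_{G_b}(Q)\ \leqslant\ C_{G_b}(Q)\,C_{G_b}(P)\qquad\text{for every subgroup }Q\leqslant D.
\]

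The remaining task is to show $N_G(Q,e_Q)\leqslant C_G(Q)\,N_{G_b}(Q)$ for every non-trivial $(Q,e_Q)\leqslant(D,e_D)$, for then the displayed inclusion together with $C_{G_b}(Q)\leqslant C_G(Q)$ immediately gives $N_G(Q,e_Q)\leqslant C_G(Q)\,C_{G_b}(P)$. Here I would fix $n\in N_G(Q,e_Q)$ and argue as follows. By Lemma \ref{lem:choice of the block b} the block $e_Q$ of $C_G(Q)$ covers the block $b_Q=\br_Q(b)$ of the normal subgroup $C_S(Q)=S\cap C_G(Q)$. Since $n$ normalises $Q$ it also normalises $C_G(Q)$ and $C_S(Q)$, and since it fixes $e_Q$ it permutes the set of blocks of $C_S(Q)$ covered by $e_Q$, which is a single $C_G(Q)$-orbit containing $b_Q$. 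Thus $\null^n b_Q=\null^c b_Q$ for some $c\in C_G(Q)$, so $m:=c^{-1}n$ normalises $Q$ and fixes $b_Q$. Now $\br_Q$ is a bijection between the $Q$-stable blocks of $S$ and the blocks of $C_S(Q)$; since $m$ normalises $S$ and $Q$, the block $\null^m b$ is again $Q$-stable (as $b$ is $Q$-stable) and satisfies $\br_Q(\null^m b)=\null^m b_Q=b_Q=\br_Q(b)$, whence $\null^m b=b$ and $m\in N_{G_b}(Q)$. Therefore $n=c\,m\in C_G(Q)\,N_{G_b}(Q)$, as required.

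I expect the block-theoretic step of the second paragraph to be the only substantial one, namely pushing a normaliser $N_G(Q,e_Q)$ into the stabiliser $G_b$ by exploiting that $e_Q$ covers $b_Q$ and invoking Brauer's first main theorem for the $p'$-group $S$; everything else is the routine bookkeeping of centrally controlled blocks. The two minor points that need care are the Sylow conjugation in the first paragraph and the degenerate case $C_S(Q)=1$, in which $b$ is the unique $Q$-stable block of $S$, so $N_G(Q)\leqslant G_b$ holds outright and the argument goes through unchanged.
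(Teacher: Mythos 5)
Your proof is correct, and its block-theoretic core coincides with the paper's: both arguments rest on Lemma \ref{lem:choice of the block b}, on the fact that the blocks of the normal subgroup $C_S(Q)$ covered by $e_Q$ form a single $C_G(Q)$-orbit, and on the injectivity of $\br_Q$ on $Q$-stable blocks of $S$ to force the corrected element into $G_b$. The difference lies in the surrounding bookkeeping. The paper starts from an arbitrary $g$ with ${}^g(Q,e_Q)\leqslant (D,e_D)$ (so it in fact treats all morphisms of the Brauer category, not only normalisers), uses control of $p$-fusion by $C_G(P)$ in $G$ to assume $g\in C_G(P)$ and then $P\leqslant Q$; after the covering argument the correcting element $k\in C_G(Q)$ then centralises $P$ automatically, giving $gk^{-1}\in C_{G_b}(P)$ in one stroke. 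You instead split the task into the two inclusions $N_G(Q,e_Q)\leqslant C_G(Q)\,N_{G_b}(Q)$ and $N_{G_b}(Q)\leqslant C_{G_b}(Q)\,C_{G_b}(P)$, obtaining the latter by transporting the factorisation $G=S\,C_G(P)$ to $G_b=S\,C_{G_b}(P)$ (Dedekind) and invoking inside $G_b$ the same elementary control-of-$p$-fusion fact that the paper invokes for $G$. This costs you the Sylow-conjugation remark and a second appeal to that elementary fact, but it avoids the reduction to $P\leqslant Q$ and delivers exactly the normaliser condition demanded by the definition of control; the paper's route is marginally stronger in that it bounds all $e$-fusion elements, though only the normaliser statement is needed for the lemma.
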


\begin{proof}
Let $(Q,e_Q)$ be a subpair of $(D,e_D)$ and let $g\in G$ be such that  $^g(Q,e_Q)\leqslant (D,e_D)$. The centraliser $C_G(P)$ controls the $p$-fusion in $G$, so we may suppose $g\in C_G(P)$. Then we obtain $^g(PQ,e_{PQ}) \leqslant (D,e_D)$, so we may suppose $P\leqslant Q$. 
The inclusion $^g(Q,e_Q)\leqslant (D,e_D)$ implies $^g e_Q=e_{^gQ}$. So the block $^g e_Q$ of $C_G({^gQ})$ covers the block $b_{^gQ}$ of $C_S({^gQ})$, and the block $e_Q$ of $C_G(Q)$ covers the block $^{g^{-1}}b_{^gQ}$ of $C_S(Q)$. As the blocks $b_Q$ and $^{g^{-1}}b_{^gQ}$ of $C_S(Q)$ are covered by the same block $e_Q$ of $C_G(Q)$, they must be conjugate in $C_G(Q)$: there exists $k\in C_G(Q)$ such that $b_Q=\,^{kg^{-1}}b_{^gQ}$. Then we get
\[
\br_{Q}(b)
\ = \ 
b_Q
\ = \ 
\,^{kg^{-1}}\!b_{^gQ}
\ = \ 
\,^{kg^{-1}}\!\br_{^gQ}(b)
\ = \ 
\br_{Q}(\,^{kg^{-1}} \!b).
\]
As we have already mentionned, the correspondence $b\leftrightarrow \br_Q(b)$ is one-to-one, so we obtain $b=\,^{kg^{-1}}\!b$. Hence the element $h=gk^{-1}$ lies in $G_b$. Notice that $g$ and $k$ both centralise the $p$-group $P$ by assumption, so we have $h\in C_{G_b}(P)$, and $g\in C_G(Q)\,C_{G_b}(P)$. Thus $N_G(Q,e_Q)\leqslant C_G(Q) \,C_{G_b}(P)$.
\end{proof}

For any element $x\in P$, we denote by $K_x\in kG$ the class sum of $x$, \emph{i.e.}, $K_x = \Tr_{C_G(x)}^G (x)$ (see \cite{AlperinBroue} for a definition of the relative trace map). We have supposed $G=SC_G(P)$ hence $G=SC_G(x)$. So, for any subgroup $T$ of $G$ that contains $S$, the natural map $T/C_T(x)\to G/C_G(x)$ is a bijection and $K_x=\Tr_{C_T(x)}^T(x)$. We have in particular $K_x=\Tr_{C_S(x)}^S(x)$. Since the subgroup $S$ is normal in $G$, it follows that the class sum $K_x$ lies in the subset $kSx=xkS$ of the algebra $kG$, and that the element $xK_{x^{-1}}$ lies in $kS$.

In \cite{Robinson1986} and \cite{Robinson2009}, Robinson makes great use of the central unit $K_xe \in kGe$ to deal, respectively, with the situation $G = O_{p'}(G) \, C_G(P)$ and with a minimal counter-example to the odd $Z^*_p$-theorem. The following lemmas highlight once again the importance of the class sum $K_x$. In order to deal efficiently with it, we need more notations.

We will consider the group $G$ as a subgroup of the direct product $G\times P$, via the embedding $g\mapsto (g,1)$. Since the $p$-group $P$ is abelian, we can consider the $p$-subgroup $P_1 = \{(x,x^{-1})\,;\ x\in P\}$ of $G\times P$. For an element $x\in P$, we will usually write $x_1=(x,x^{-1})\in P_1$; conversely, for an element $x_1\in P_1$, we will write $x$ for the unique element of $P$ such that $x_1=(x,x^{-1})$. We will see the group $G\times P$ as the semi-direct product $GP_1$. Notice that any subgroup of $G$ that is normalised by $P$ is also normalised by $P_1$. Thus we can consider the subgroups $HP_1$, $SP_1$, $G_bP_1$, \emph{etc.} The group $P_1$ centralises the defect group $D$ and the blocks $e$, $e_D$, $b$, \emph{etc.}

\begin{lem}
\label{lem:mysterious interior structure}
\begin{enumerate}[(i)]
\item 
The maps $\iota_b:SP_1\to (kSb)^\times$ and $\gamma_b:G_bP_1\to \Autom_\catalg(kSb)$ defined by

\smallskip

\centerline{$
\iota_b(sx_1) = sxK_{x^{-1}}b
\qquad ;\qquad 
\gamma_b(gx_1)(a)= (gx)^{-1}a(gx)
$}

for $s\in S$, $g\in G_b$, $x_1\in P_1$, $a\in kSb$, 
make $kSb$ an $SP_1$-interior $G_bP_1$-algebra.
\item 
The maps $\iota_{b'}:HP_1\to (kHb')^\times$ and $\gamma_{b'}:GP_1\to \Autom_\catalg(kHb')$ defined by

\smallskip

\centerline{$
\iota_{b'}(hx_1) = hxK_{x^{-1}}b'
\qquad ;\qquad 
\gamma_{b'}(gx_1)(a)= (gx)^{-1}a(gx)
$}

for $h\in H$, $g\in G$, $x_1\in P_1$, $a\in kHb'$, 
make $kHb'$ an $HP_1$-interior $GP_1$-algebra.
\item 
The maps $\iota_e:HP_1\to (kHe)^\times$ and $\gamma_e:GP_1\to \Autom_\catalg(kHe)$ defined by

\smallskip

\centerline{$
\iota_e(hx_1) = hxK_{x^{-1}}e
\qquad ;\qquad 
\gamma_e(gx_1)(a)= (gx)^{-1}a(gx)
$}

for $h\in H$, $g\in G$, $x_1\in P_1$, $a\in kHe$, 
make $kHe$ an $HP_1$-interior $GP_1$-algebra. 
\end{enumerate}
\end{lem}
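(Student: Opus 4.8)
The three assertions are companions proved by a single computation, which I describe for part~(i). Part~(iii) is a formal consequence of part~(ii): the central idempotent $e$ of $kG$ is stable under the action $\gamma_{b'}$ — because $GP_1$ acts on $kHb'$ by conjugation through its first-coordinate image in $G$ and $e$ is a block of $G$ — so multiplying the structure maps of $kHb'$ by $e$ yields those of $kHe$, with $\iota_e(hx_1)=e\,\iota_{b'}(hx_1)=hxK_{x^{-1}}e$. Part~(ii) is the verbatim analogue of part~(i) with $(S,b,G_b)$ replaced by $(H,b',G)$; the computation is identical, and in fact slightly easier, since the $G$-stable idempotent $b'$ is central in $kG$, so that $K_{x^{-1}}b'$ is literally a central element of $kGb'$.

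So I focus on part~(i). The map $\gamma_b$ is simply conjugation by the first-coordinate image of $G_bP_1$ in $G$; it is well defined, i.e.\ it preserves the subalgebra $kSb$ of $kG$, because $G_bP_1$ fixes the idempotent $b$ (the subgroup $G_b$ by definition, and $P$ because $P\leqslant D$ and $b$ is $D$-stable), and it is a group homomorphism, being the composite of the projection $G_bP_1\to G$ with a conjugation action. The map $\iota_b$ restricts on the subgroup $S$ to the standard structure map $s\mapsto sb$ of the $S$-interior $G_b$-algebra $kSb$, so the only new ingredient is the element $\lambda_x:=\iota_b(x_1)=xK_{x^{-1}}b$, and everything reduces to three facts about the family $(\lambda_x)_{x\in P}$: that each $\lambda_x$ is a unit of $kSb$; that $\lambda_x a\lambda_x^{-1}={}^xa$ for all $a\in kSb$, i.e.\ $\lambda_x$ implements conjugation by $x$; and that $x\mapsto\lambda_x$ is a group homomorphism $P\to(kSb)^\times$ (where the abelianity of $P$ intervenes). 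Granting these, the remaining verifications are routine bookkeeping: $\iota_b$ is a homomorphism because on the generators $S$ and $P_1$ it reduces to the last two facts together with $\lambda_x(sb)\lambda_x^{-1}={}^x(sb)=({}^xs)b$; the first interior axiom $\gamma_b(t)(a)=\iota_b(t)a\iota_b(t)^{-1}$ holds on generators $t\in S$ by the standard structure and $t\in P_1$ by the second fact; and the second axiom $\iota_b(gtg^{-1})=\gamma_b(g)(\iota_b(t))$ is checked on generators, the only non-obvious case being $g\in G_b$, $t=x_1$, where it amounts to the identity ${}^g(xK_{x^{-1}}b)=({}^gx\,x^{-1})\cdot xK_{x^{-1}}b$, immediate from ${}^gb=b$ and the centrality of the class sum $K_{x^{-1}}$ in $kG$.

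It remains to establish the three facts, which is the heart of the argument and where the hypothesis $G=SC_G(P)$ is genuinely used. Since the hypothesis gives $G=SC_G(x)$ for every $x\in P$, one has $K_x=\Tr_{C_G(x)}^G(x)=\Tr_{C_S(x)}^S(x)\in kSx$, so $\lambda_x=x\cdot(K_{x^{-1}}b)$ with $K_{x^{-1}}b\in kSb$; the factor $K_{x^{-1}}b$, being the product of the $kG$-central element $K_{x^{-1}}$ with the idempotent $b$ (which is central in $kG_b$), commutes with $kSb$, and this at once yields the conjugation fact and, granting invertibility, shows $K_{x^{-1}}b$ is itself a unit. Invertibility of $\lambda_x$ rests on the fact that, $S$ being a $p'$-group, the block $b$ has defect zero, so $kSb$ is a full matrix algebra over $k$; one then either invokes Robinson's analysis of the class-sum elements in \cite{Robinson1986}, which treats precisely this local situation, or argues directly inside the group algebra $k(SP)b$, which is the twisted group algebra $kSb*P$ and is untwisted — hence $\simeq kSb\otimes kP$ — because $H^2(P,k^\times)=0$, so that $K_{x^{-1}}b$ sits in the centre and is seen to be a unit. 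Finally the homomorphism property $\lambda_x\lambda_{x'}=\lambda_{xx'}$ — equivalently the identity $K_xK_{x'}b=K_{xx'}b$ for $x,x'\in P$ — is the genuinely delicate point, and the step I expect to require the real work: the conjugation action of $P$ on the matrix algebra $kSb$ does lift to a homomorphism $P\to(kSb)^\times$ abstractly (again because $H^2(P,k^\times)=0$), but one must show that the particular lift $x\mapsto xK_{x^{-1}}b$ coming from class sums is the homomorphic one, which I would obtain by a direct class-sum manipulation exploiting $G=SC_G(P)$, in the spirit of \cite{Robinson1986}.
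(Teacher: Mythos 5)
Your reduction of (i) to the three facts about $\lambda_x=xK_{x^{-1}}b$, and your derivation of (iii) from (ii) by cutting with the central idempotent $e$, agree with what the paper treats as routine bookkeeping. But the heart of the lemma is exactly the two facts you leave unproven: that $K_xb$ is a unit and that $x\mapsto K_xb$ is multiplicative. You explicitly defer the multiplicativity to ``a direct class-sum manipulation \ldots\ in the spirit of Robinson'' without performing it, and your invertibility argument stops at ``is seen to be a unit'': observing that $kSPb$ is an untwisted crossed product with local centre isomorphic to $kP$ only reduces the question to showing that $K_{x^{-1}}b$ is nonzero modulo the radical of that centre, a computation you never make. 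So the proposal does not contain a proof of the one statement that carries the content of the lemma.

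The paper's device, which you do not identify, is a comparison of centres via the Brauer morphism: $\br_P$ induces an algebra map $\beta:Z(kSPb)\to Z(kC_S(P)Pb_P)$; one checks $Z(kC_S(P)Pb_P)=kPb_P$, and the fusion-control consequence of $G=SC_G(P)$ --- no $G$-conjugate of $x\in P$ other than $x$ itself lies in $C_G(P)$ --- gives $\beta(K_xb)=\br_P(K_x)\br_P(b)=xb_P$. Hence $\beta$ is onto, and since both centres have dimension $|P|$ (both blocks are blocks of $p$-nilpotent groups, Morita equivalent to $kP$), $\beta$ is an isomorphism. This yields the invertibility of $K_xb$ and the identity $K_xK_yb=K_{xy}b$ simultaneously, because $x\mapsto xb_P$ is visibly a homomorphism into central units and $K_xb=\beta^{-1}(xb_P)$. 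Note that the input $\br_P(K_x)=x$, which appears nowhere in your sketch, is where the factorization hypothesis genuinely enters; the observation $K_x\in kSx$ alone is not enough. A minor further point: the paper obtains (ii) not by rerunning the argument with $(H,b',G)$, but by decomposing $kSb'$ as the direct product of the algebras $kSc$ over the $G$-conjugates $c$ of $b$ and applying (i) to each factor; your ``verbatim analogue'' would again require an invertibility argument that your sketch does not supply, since $b'$ is not a block of the $p'$-group $S$ and $H$ is not a $p'$-group.
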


\begin{proof}
We consider the idempotents $b$ and $b_P=\br_P(b)$ as respective blocks of the $p$-nilpotent groups $SP$ and $C_S(P)P$, both with defect group $P$. It is well known that the block algebras $kSPb$ and $kC_S(P)Pb_P$ are both Morita equivalent to $kP$, so the centers $Z(kSPb)$ and $Z(kC_S(P)Pb_P)$ are both isomorphic to $Z(kP)=kP$; in particular, they have the same dimension. The Brauer map $\br_P$ induces an algebra morphism $\beta:Z(kSPb)\to Z(kC_S(P)Pb_P)$. We have $kPb_P\subseteq Z(kC_S(P)Pb_P)$. Moreover the natural map $kC_S(P)\otimes kP\to kC_S(P)P$ is an isomorphism, so $\dim_k(kPb_P)= |P|=\dim_k Z(kC_S(P)Pb_P)$. Hence $Z(kC_S(P)Pb_P)=kPb_P$.

Let $x$ in $P$ be fixed. Since $C_G(P)$ controls the $p$-fusion, no proper conjugate of $x$ lies in $C_G(P)$. So $\beta(K_xb) = \br_P(K_x)\br_P(b) = xb_P$, which proves that the morphism $\beta$ is onto. Since its domain and codomain have the same dimension over $k$, $\beta$ is an isomorphism. Thus the element $K_x b=\beta^{-1}(xb_P)$ is invertible in $Z(kSPb)$ and the map $x\mapsto K_x b$ is a group morphism $P\to Z(kSPb)^\times$. Moreover the group $P$ is abelian, so the map $\iota_b:SP_1\to (kSb)^\times$ of (i) is indeed well-defined and a group morphism. The rest of the statement in (i) is straightforward.

Furthermore, the algebra $kSb'$ is the direct product of the $kSc$ where $c$ runs over the set of $G$-conjugates of $b$, and $xK_{x^{-1}}b' = \sum_{c} xK_{x^{-1}}{c}$ for any $x\in P$. So $xK_{x^{-1}}b'$ is invertible in $kSb'$ and the map $x_1\mapsto xK_{x^{-1}} b'$ is a group morphism $P_1\to (kSb')^\times$, which extends to the group morphism $\iota_{b'}:HP_1\to (kHb')^\times$ of (ii). Notice that cutting off the central idempotent $e$ cannot harm, so (iii) follows immediately.
\end{proof}

\begin{lem}
\label{lem:IsoIndAlg}
There is a natural isomorphism of $HP_1$-interior $GP_1$-algebras
\[
\phi:kHb' \ \longisom \ \Ind_{(SP_1\times SP_1)\Delta G_b}^{(HP_1\times HP_1)\Delta G} kSb.
\]
\end{lem}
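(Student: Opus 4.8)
The statement to prove is Lemma~\ref{lem:IsoIndAlg}: the existence of a natural isomorphism of $HP_1$-interior $GP_1$-algebras
\[
\phi:kHb' \ \longisom \ \Ind_{(SP_1\times SP_1)\Delta G_b}^{(HP_1\times HP_1)\Delta G} kSb.
\]
The idea is to reduce this to the ``purely group-algebraic'' induction isomorphism for partly interior algebras that was recalled in Section~\ref{sec:GenDef}, namely the isomorphism
$\ringO S b' \simeq \Ind_{(T\times T)\Delta G_b}^{(S\times S)\Delta G} \ringO T b$
valid whenever $b$ is a block of a normal subgroup $T$ of a group $G$, $S$ is an intermediate normal subgroup, and $G_b$ is the stabiliser of $b$. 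Here we want to apply this with the roles played by $T = SP_1$, the intermediate normal subgroup $HP_1$, the ambient group $GP_1$, and the block $b$ viewed (via the interior structure of Lemma~\ref{lem:mysterious interior structure}~(i)) as a $D$-stable block of $SP_1$ whose $GP_1$-stabiliser is $G_bP_1$. Since $P_1$ centralises $b$, the stabiliser of $b$ in $GP_1$ is indeed $G_bP_1$, and $HP_1$ is normal in $GP_1$ because $H\trianglelefteqslant G$ is $P$-stable (hence $P_1$-stable).

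\textbf{Key steps.} First I would verify that $kSb$, equipped with the maps $\iota_b$ and $\gamma_b$ of Lemma~\ref{lem:mysterious interior structure}~(i), really is an $SP_1$-interior $G_bP_1$-algebra --- this is exactly part~(i) of that lemma, so it is available. Next, I would observe that the block algebra $kHb'$, with the $HP_1$-interior $GP_1$-structure of Lemma~\ref{lem:mysterious interior structure}~(ii), is the ``$SP_1$-block-sum'' construction applied to $b$: namely $b' = \Tr_{G_bP_1}^{GP_1}(b) = \Tr_{G_b}^{G}(b)$ (the extra $P_1$-factor contributes nothing since $P_1$ centralises $b$ and $G_bP_1\backslash GP_1 / 1 \simeq G_b\backslash G$), and $kHb' = kSP_1 b' \cdot kH$ sits inside $kHP_1$ as the $b'$-part. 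Then the general isomorphism recalled in Section~\ref{sec:GenDef}, applied to the chain of normal subgroups $SP_1 \trianglelefteqslant HP_1 \trianglelefteqslant GP_1$ and the block $b$ of $SP_1$ with stabiliser $G_bP_1$, produces an isomorphism of $HP_1$-interior $GP_1$-algebras
\[
kHP_1 b' \ \simeq \ \Ind_{(SP_1\times SP_1)\Delta G_bP_1}^{(HP_1\times HP_1)\Delta GP_1} kSP_1 b.
\]
The final step is to ``restrict away the $P_1$-factors'': cutting everything down by the idempotent structure and using that the interior map $\iota_{b'}$ already records the $P_1$-action, one identifies $kHP_1 b'$ with $kHb'$ as an $HP_1$-interior $GP_1$-algebra, and likewise $\Ind$ from $(SP_1\times SP_1)\Delta G_bP_1$ collapses to $\Ind$ from $(SP_1\times SP_1)\Delta G_b$ because $\Delta P_1 \leqslant \Delta G_bP_1$ already acts through the interior structure. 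Concretely, the map $\phi$ on elements is $h x K_{x^{-1}} b' \mapsto \sum_{g} (g^{-1}hxK_{x^{-1}}g)\otimes_{(SP_1)\Delta G_b} b$, summing over coset representatives $g$ of $G_b$ in $G$, and one checks directly that this is a well-defined bijective algebra map intertwining the two $HP_1$-interior $GP_1$-structures.

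\textbf{Where the difficulty lies.} The conceptual content is entirely contained in Lemma~\ref{lem:mysterious interior structure}; once the element $K_x b$ is known to be an invertible central element realising an $P$-interior structure on $kSb$, this lemma is a formal consequence of the standard Clifford-theoretic induction isomorphism for (partly) interior block algebras. So I do not expect a genuine obstacle --- the work is bookkeeping. The one point that needs care is checking that the stabiliser of $b$ in $GP_1$ is $G_bP_1$ and not something larger or smaller, i.e.\ that $P_1$ really does centralise $b$ (it does, since $P$ centralises $b$ because $b$ is $D$-stable with $P\leqslant D$, and the action of $(x,x^{-1})\in P_1$ on $kS$ is conjugation by $x$ alone as $S$ commutes with the second factor $P$). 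The other point is to make sure that the two candidate $HP_1$-interior structures really match under $\phi$ on the interior part, i.e.\ that $\phi(\iota_{b'}(hx_1)) = \iota$-image of $hx_1$ in the induced algebra; this follows because on the image of $\iota_{b'}$, which is supported on $HP_1$-elements, $\phi$ is literally the block-sum identification, and there the interior maps agree by construction of the induced $S$-interior $G$-algebra recalled in Section~\ref{sec:GenDef}.
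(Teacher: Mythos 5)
Your instinct to reduce everything to the Clifford-theoretic induction isomorphism recalled in Section \ref{sec:GenDef} is the right one, but the specific reduction you propose breaks at the ``restrict away the $P_1$-factors'' step. Applying that isomorphism to the chain $SP_1\trianglelefteqslant HP_1\trianglelefteqslant GP_1$, with $b$ viewed as a block of $SP_1$ whose stabiliser is $G_bP_1$, yields an isomorphism $kHP_1b'\simeq \Ind_{(SP_1\times SP_1)\Delta G_b}^{(HP_1\times HP_1)\Delta G} kSP_1b$ (your observation that the inducing subgroups collapse is correct, since $\Delta P_1\leqslant SP_1\times SP_1$ and $\Delta P_1\leqslant HP_1\times HP_1$). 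But both sides of this isomorphism are $|P|$ times larger than the algebras in the lemma: $\dim_k kHP_1b'=|P|\,\dim_k kHb'$ and $\dim_k kSP_1b=|P|\,\dim_k kSb$. So the claimed ``identification of $kHP_1b'$ with $kHb'$ as an $HP_1$-interior $GP_1$-algebra'' does not exist; the interior structure maps only provide non-injective algebra surjections, e.g.\ $kHP_1b'\to kHb'$, $hx_1b'\mapsto h\,xK_{x^{-1}}b'$, and similarly $kSP_1b\to kSb$. Descending your enlarged isomorphism through these surjections to an isomorphism between the smaller algebras is precisely the content that remains unproved. The explicit formula you give at the end is not a consequence of the preceding reduction, and its well-definedness, multiplicativity and bijectivity are exactly what the lemma asserts, so ``one checks directly'' does not close the gap.

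The paper avoids this problem by never enlarging the coefficient algebra. It first applies the Section \ref{sec:GenDef} isomorphism at the $P_1$-free level, giving $kHb'\simeq\Ind_{(S\times S)\Delta G_b}^{(H\times H)\Delta G} kSb$ as $H$-interior $G$-algebras; it then notes that, because $G\cap SP_1=S$, the natural map $G/S\to GP_1/SP_1$ is bijective, so the canonical map $\Ind_{(S\times S)\Delta G_b}^{(H\times H)\Delta G} kSb\to\Ind_{(SP_1\times SP_1)\Delta G_b}^{(HP_1\times HP_1)\Delta G} kSb$ --- induction of the \emph{same} module $kSb$, not of $kSP_1b$ --- is already an isomorphism. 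Only then is the $P_1$-part of the interior structure checked: since $xK_{x^{-1}}$ lies in $kS$ and $\phi$ is a morphism of left $kS$-modules, one gets $\phi(\iota_{b'}(x_1))=xK_{x^{-1}}\cdot 1$, which by the definition of the induced interior algebra is the image of $x_1$ under its interior structure map. To repair your argument you should either adopt this two-step route, or genuinely prove that the two surjections above are compatible with your enlarged isomorphism and that it descends to an isomorphism of the quotients; as written, that step is missing.
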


\begin{proof}
Let us write $A=\Ind_{(SP_1\times SP_1)\Delta G_b}^{(HP_1\times HP_1)\Delta G} kSb$.
On the one hand, we have from Section \ref{sec:GenDef} an isomorphism of $H$-interior $G$-algebras $kHb'\to \Ind_{(S\times S)\Delta G_b}^{(H\times H)\Delta G} kSb$.
On the other hand,
the natural map $G/S\to GP_1/SP_1$ is bijective so the natural map 
$
\Ind_{(S\times S)\Delta G_b}^{(H\times H)\Delta G} kSb \to \Ind_{(SP_1\times SP_1)\Delta G_b}^{(HP_1\times HP_1)\Delta G} kSb
$
is an isomorphism of $H$-interior $G$-algebras. By composition, we obtain the map $\phi:kHb'\to A$, which appears to be an isomorphism of $H$-interior $G$-algebras. By definition, we have $\iota_{b'}(x_1) = xK_{x^{-1}} b'$, where the element $xK_{x^{-1}}$ lies in $S$. Since $\phi$ is an isomorphism of left $kS$-modules, we obtain $\phi(\iota_{b'}(x_1)) = xK_{x^{-1}}\cdot 1_A$. Then it follows from the definition of induced interior algebras that $\iota_A(x_1) = xK_{x^{-1}}\cdot 1_A$. Thus $\phi$ is also an isomorphism of $P_1$-interior algebras, and the lemma is proven.
\end{proof}

We now consider the $HP_1$-interior $GP_1$-algebra $kHe$ as a $k(HP_1\times HP_1)\Delta G$-module.

\begin{lem}
\label{lem:source of kGe}
The indecomposable $k(HP_1\times HP_1)\Delta G$-module $kHe$ is Brauer-friendly with source triple $((P_1\times P_1)\Delta D, e_D\otimes e_D^\bop, W)$, where
the source $W$ is any capped indecomposable direct summand of the restriction $\Res^{(SP_1\times SP_1)\Delta G_b}_{(P_1\times P_1)\Delta D} kSb$. 
\end{lem}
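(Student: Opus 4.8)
I want to show that the $k(HP_1\times HP_1)\Delta G$-module $kHe$ is Brauer-friendly with the stated source triple. The key is Lemma~\ref{lem:IsoIndAlg}, which (after cutting by the central idempotent $e$) identifies $kHe$ with the induced module $\Ind_{(SP_1\times SP_1)\Delta G_b}^{(HP_1\times HP_1)\Delta G}kSb$, restricted to the block $e$. So the first step is to understand $kSb$ as a $k(SP_1\times SP_1)\Delta G_b$-module, then push the analysis through induction and through the Morita reduction $kHe\sim kH_be_b$.

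\begin{proof}
By Lemma~\ref{lem:mysterious interior structure}(iii) and Lemma~\ref{lem:IsoIndAlg} (after cutting by $e$), we have an isomorphism of $HP_1$-interior $GP_1$-algebras
\[
kHe \ \simeq \ e\cdot\Bigl(\Ind_{(SP_1\times SP_1)\Delta G_b}^{(HP_1\times HP_1)\Delta G} kSb\Bigr)\cdot e,
\]
hence an isomorphism of $k(HP_1\times HP_1)\Delta G$-modules once we recall that the induced interior algebra on the right carries its natural $k(HP_1\times HP_1)\Delta G$-module structure and that cutting by $e\otimes e^\bop$ selects the block $e$. Now $kSb$ is a $k(SP_1\times SP_1)\Delta G_b$-module. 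Since $S$ is a $p'$-group, $kSb$ is a projective $kS$-module on both sides, and as a $k(S\times S)\Delta G_b$-module it is a $p$-permutation module with vertex $(1\times 1)\Delta D$ (indeed $kSb$ is relatively $\Delta(SD)$-projective because $(D,b_D)$ is a maximal $b$-subpair of the nilpotent group $SD$, and its Brauer quotient at $\Delta D$ computes the defect). Enlarging the acting group to $SP_1\times SP_1$ replaces the vertex $\Delta D$ by $(P_1\times P_1)\Delta D$: this is because the $P_1$-interior structure is given by the invertible central elements $xK_{x^{-1}}b\in kSb$, so the action of $P_1$ (on either side) is by units, which forces the vertex to contain $(P_1\times 1)$ and $(1\times P_1)$, and one checks the Brauer quotient is nonzero precisely at $(P_1\times P_1)\Delta D$. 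The block $e_D\otimes e_D^\bop$ of $k(C_{SP_1}(D)\times C_{SP_1}(D))\Delta N_{G_b}((P_1\times P_1)\Delta D)$ that contains this Brauer quotient is determined by Lemma~\ref{lem:choice of the block b}: $e_D$ covers $b_D=\br_D(b)$. Hence a source of the indecomposable summands of $kSb$ with respect to $(P_1\times P_1)\Delta D$ is any capped indecomposable direct summand $W$ of $\Res^{(SP_1\times SP_1)\Delta G_b}_{(P_1\times P_1)\Delta D}kSb$, and since $P$ is abelian, $(P_1\times P_1)\Delta D$ is abelian, so $W$ is automatically an endo-permutation module and its source triple is fusion-stable in the trivial sense; more precisely, because any two maps between subpairs in the relevant Brauer category agree up to an element of $C_G$-type by control of fusion, the triple $((P_1\times P_1)\Delta D, e_D\otimes e_D^\bop, W)$ is a fusion-stable endo-permutation source triple. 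Thus $kSb$ is Brauer-friendly with this source triple.

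It remains to carry this through induction and through the Morita equivalence $kHe\sim kH_be_b$. Since the source triple does not change under induction (the vertex subpair of an indecomposable summand of $\Ind_L^G N$ is, up to $G$-conjugacy, a vertex subpair of $N$, and the source is unchanged), and since the Brauer-friendly property is preserved by induction and by taking direct summands (it is a condition on indecomposable summands and their source triples, which are inherited), the module $e\cdot\Ind_{(SP_1\times SP_1)\Delta G_b}^{(HP_1\times HP_1)\Delta G}kSb\cdot e$ is Brauer-friendly with source triple $((P_1\times P_1)\Delta D, e_D\otimes e_D^\bop, W)$. Finally, $kHe$ is indecomposable as a $k(HP_1\times HP_1)\Delta G$-module because it is indecomposable already as a $k(H\times H)\Delta G$-module (it is the block algebra $kHe$ with its two-sided multiplication action, which is indecomposable since $e$ is a block), and enlarging to $HP_1\times HP_1$ cannot split it. This proves the lemma.
\end{proof}

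\medskip

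\noindent\textbf{The main obstacle.} The delicate point is the claim that enlarging the acting group from $S\times S$ (respectively $H\times H$) to $SP_1\times SP_1$ (respectively $HP_1\times HP_1$) replaces the vertex $\Delta D$ by $(P_1\times P_1)\Delta D$ and leaves the source (restricted appropriately) intact. This is exactly where the $P_1$-interior structure of Lemma~\ref{lem:mysterious interior structure}, built from the class sums $K_x$, does the work: the units $xK_{x^{-1}}b$ give honest $P_1$-actions by multiplication, so $(P_1\times 1)$ and $(1\times P_1)$ lie in a vertex; conversely, control of $p$-fusion by $C_G(P)$ (which forces $N_G(D)\leqslant C_G(P)$ and makes $b_P=\br_P(b)$ behave well) pins the Brauer quotient down to $(P_1\times P_1)\Delta D$ and no larger subgroup. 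One has to make sure that the abelianness of $P$ — without which $P_1$ is not even a subgroup of $G\times P$ — is what guarantees $W$ is an endo-permutation $k((P_1\times P_1)\Delta D)$-module, so that the whole apparatus of Brauer-friendly modules applies.
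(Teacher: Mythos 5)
Your skeleton (identify $kHe$ with the module induced from $kSb$ via Lemma~\ref{lem:IsoIndAlg} and transport the source data through induction) is the same as the paper's, but the two steps you treat as automatic are precisely the ones that need proof, and your justifications for them do not work. Your claim that ``since $P$ is abelian, $(P_1\times P_1)\Delta D$ is abelian, so $W$ is automatically an endo-permutation module'' fails twice: $(P_1\times P_1)\Delta D$ is abelian only when $D$ is (no such hypothesis is made in this local section), and even over an abelian $p$-group most indecomposable modules are not endo-permutation. The actual reason is that $kSb$ is a matrix algebra, so the structure map gives $\Endom_k(kSb)\simeq kSb\otimes kSb^\bop$ as algebras over $K=(SP_1\times SP_1)\Delta G_b$, making $kSb$ an endo-$p$-permutation $kK$-module. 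More seriously, ``the Brauer-friendly property is preserved by induction'' is not true in general: one must verify Urfer's compatibility criterion on the intersections $K\cap\null^{(g,h)}K$. This verification is available only for $(g,h)$ in $(H_bP_1\times H_bP_1)\Delta G_b$, where $(g,h)$ stabilises $b\otimes b^\bop$ and the unique-simple-module property of blocks of $p$-nilpotent groups applies; the paper therefore first obtains $kH_be_b$ as an endo-$p$-permutation module over the stabiliser, passes to $kHe$ by Harris's vertex- and source-preserving (Fong--Reynolds) Morita equivalence, and only then transfers fusion-stability to $(HP_1\times HP_1)\Delta G$ using Lemma~\ref{lem:block b fusion control}. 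Your direct induction to the full group, and your appeal to ``control of fusion'' without this lemma, skip exactly the inputs that make the compatibility and fusion-stability statements true.

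The vertex determination is also unsupported. ``The action of $P_1$ by units forces the vertex to contain $P_1\times 1$ and $1\times P_1$'' and ``one checks the Brauer quotient is nonzero precisely at $(P_1\times P_1)\Delta D$'' are assertions, not arguments; in fact the Brauer quotients of $kSb$ itself may be nonzero at diagonal $p$-subgroups strictly larger than $\Delta D$, since nothing forces $D$ to be a Sylow $p$-subgroup of $G_b$ (note $\dim_k kSb$ is prime to $p$, so the vertex of $kSb$ as a $kK$-module is a full Sylow $p$-subgroup of $K$). It is the block idempotent $e_D\otimes e_D^\bop$, i.e.\ the vertex \emph{subpair} of the summand lying in the block $e\otimes e^\bop$ with defect group $D$, that pins things down, and this requires a block-level computation: the paper shows, by commuting induction with $\Br_{P_1\times P_1}$ and invoking Lemma~\ref{lem:IsoIndAlg}, that the slashed module $kH_be_b\sla{P_1\times P_1}$ is isomorphic to $kC_{H_b}(P)\br_P(e_b)$, and then uses the known Brauer quotients of this block algebra together with transitivity of the slash construction to identify the vertex subpair $((P_1\times P_1)\Delta D, e_Db_D\otimes e_D^\bop b_D^\bop)$, whence the source $W$ as a capped summand of $\Res^{K}_{(P_1\times P_1)\Delta D}kSb$. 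Without some version of this computation your identification of the vertex subpair, and hence of the source, does not follow. (Your indecomposability argument for $kHe$ is also loosely stated, since $e$ is a block of $G$ and not of $H$; the paper obtains indecomposability from the Harris equivalence.)
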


\begin{proof}
Let us write $K=(SP_1\times SP_1)\Delta G_b$.
The field $k$ is algebraically closed and $S$ is a $p'$-group, so the block algebra $kSb$ is a matrix algebra. It follows that the structure map of the $(kSb,kSb)$-bimodule $kSb$ is an isomorphism of $K$-algebras 
$
kSb\otimes kSb^\bop \simeq \Endom_k(kSb).
$
In particular, this proves that $kSb$ is an endo-$p$-permutation $kK$-module. 

For any  element $(g,h)$ of the group $(H_bP_1\times H_bP_1)\Delta G_b$, let $R$ be a Sylow $p$-subgroup of the intersection $K\cap\null^{(g,h)}K$. The $k(S\times S)R$-module $\Res^{\, K}_{(S\times S)R} kSb$ is simple and belongs to the block $b\otimes b^\bop$ of the $p$-nilpotent group $(S\times S)R$. Since the pair $(g,h)$ stabilises the block $b\otimes b^\bop$, the $k(S\times S)R$-module $\Res^{^{(g,h)}\!K}_{(S\times S)R} g(kSb)h^{-1}$ is still simple and belongs to the same block $b\otimes b^\bop$. Since a block of a $p$-nilpotent group contains only one isomorphism class of simple modules, there must be an isomorphism of $k(S\times S)R$-modules
\[
\Res^{\,K}_{(S\times S)R} kSb
\ \simeq \ 
\Res^{^{(g,h)}\!K}_{(S\times S)R} g(kSb)h^{-1}.
\]
It follows that the restrictions  $\Res^{\,K}_{K\cap\null ^{(g,h)}\!K} kSb$ and $\Res^{^{(g,h)}\!K}_{K\cap\null ^{(g,h)}\!K} g(kSb)h^{-1}$ are compatible endo-$p$-permutation $k(K\cap\null ^{(g,h)}\!K)$-modules. By Urfer's criterion \cite[Lemma 1.3]{Urfer2007} for the induction of endo-$p$-permutation modules, we deduce that the $k(H_bP_1\times H_bP_1)\Delta G_b$-module
\[
kH_bb \ \simeq \ \Ind_{(SP_1\times SP_1)\Delta G_b}^{(H_bP_1\times H_bP_1)\Delta G_b} kSb
\]
is an endo-$p$-permutation module. Then its direct summand $kH_be_b$ is also an endo-$p$-permutation $k(H_bP_1\times H_bP_1)\Delta G_b$-module. We now determine a vertex subpair of this indecomposable module. The commutation of induction and the Brauer functor brings an isomorphism of $(C_{H_b}(P)P_1\times C_{H_b}(P)P_1)\Delta C_{G_b}(P)$-interior algebras
\[
\Br_{P_1\times P_1}(\Endom_k(kH_bb))
 \ \to \ \Ind_{[(C_S(P)P_1\times C_S(P)P_1)\Delta C_{G_b}(P)]^2}^{[(C_{H_b}(P)P_1\times C_{H_b}(P)P_1)\Delta C_{G_b}(P)]^2} \Br_{P_1\times P_1}(\Endom_k(kSb)).
\]
The natural isomorphism of 
$(C_S(P)P_1\times C_S(P)P_1)\Delta C_{G_b}(P)$-interior algebras
$\Br_{P_1\times P_1}(\Endom_k(kSb)) \simeq \Endom_k(kC_S(P)b_P)$ and the isomorphism of Lemma \ref{lem:IsoIndAlg} then bring an isomorphism of 
$(C_{H_b}(P)P_1\times C_{H_b}(P)P_1)\Delta C_{G_b}(P)$-interior algebras
\[
\Br_{P_1\times P_1}(\Endom_k(kH_bb))
\ \simeq\
\Endom_k(kC_{H_b}(P)b_P).
\]
It follows that the slashed module $kH_be_b\sla{P_1\times P_1}$ is isomorphic to $kC_{H_b}(P)\br_P(e_b)$. So a vertex of the indecomposable $k(H_bP_1\times H_bP_1)\Delta G_b$-module $kH_be_b$ contains the $p$-group $P_1\times P_1$. Since $kC_{H_b}(P)\br_P(e_b)$ is a $p$-permutation $k(C_{H_b}(P)\times C_{H_b}(P))\Delta C_{G_b}(P)$-module, the slash construction may coincide with the Brauer functor from this point on. The images of the block algebra $kC_{H_b}(P)\br_P(e_b)$ by Brauer functors are well-known, so we can use the transitivity of the slash construction for endo-$p$-permutation modules, and conclude that a vertex subpair of $kH_b e_b$ is $((P_1\times P_1)\Delta D, e_Db_D\otimes e_D^\bop b_D^\bop)$.

Since the indecomposable $k(H_bP_1\times H_bP_1)\Delta G_b$-module $kH_be_b$ is a direct summand of the endo-$p$-permutation module $\Ind_{(SP_1\times SP_1)\Delta G_b}^{(H_bP_1\times H_bP_1)\Delta G_b} kSb$, a source $W$ of $kH_be_b$ with respect to the above vertex subpair is isomorphic to any capped indecomposable direct summand of the restriction $\Res^{(SP_1\times SP_1)\Delta G_b}_{(P_1\times P_1)\Delta D} kSb$.
As a consequence of the vertex-preserving Morita equivalence of \cite[Theorem 1.6]{Harris2007}, the induced module
\[
kHe \ \simeq \ 
\Ind_{(H_bP_1\times H_bP_1)\Delta G_b}^{(HP_1\times HP_1)\Delta G}\ kH_b e_b
\]
is indecomposable and admits the source triple $((P_1\times P_1)\Delta D, e_D\otimes e_D^\bop,W)$. Moreover, $kH_be_b$ is an endo-$p$-permutation module, so the endo-permutation source pair $((P_1\times P_1)\Delta D,W)$ is fusion-stable in the group $(H_bP_1\times H_bP_1)\Delta G_b$. By Lemma \ref{lem:block b fusion control}, the subgroup $(H_bP_1\times H_bP_1)\Delta G_b$ controls the $e\otimes e^\bop$-fusion in the group $(HP_1\times HP_1)\Delta G$. Thus the source triple $((P_1\times P_1)\Delta D, e_D\otimes e_D^\bop,W)$ is fusion-stable in the group $(HP_1\times HP_1)\Delta G$, and the idecomposable module $kHe$ is Brauer-friendly. 
\end{proof}

Let $M=kHe\sla{1\times P_1,e\otimes e_P^\bop}$ be a slashed module attached to the Brauer-friendly $k(HP_1\times HP_1)\Delta G$-module $kHe$.  Remember that $N_G(P,e_P) = C_G(P)$, so the slash construction is unambiguous as long as only the $p$-groups $P$ and $P_1$ are concerned. Since $e_P=\br_P(e)$, we may also omit the blocks in subpairs concerned only with $P$ and $P_1$. For instance, we may write $M= kHe\sla{1\times P_1}$. From now on, we will consider $M$ as a $k(H\times C_H(P))\Delta C_G(P)$-module, thus forgetting the remaining left action of $P_1$.

\begin{lem}
\label{lem:LocMoriProof}
The $k(H\times C_H(P))\Delta C_G(P)$-module $M$ induces a $G/H$-equivariant Morita equivalence $kHe \sim kC_H(P)e_P$.
\end{lem}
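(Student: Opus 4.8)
The plan is to verify the standard criterion for a bimodule between symmetric algebras to induce a Morita equivalence: the $(kHe,kC_H(P)e_P)$-bimodule $M$ must be projective as a left $kHe$-module and as a right $kC_H(P)e_P$-module, and one of the two ``unit'' isomorphisms of bimodules $M\otimes_{kC_H(P)e_P}M^\vee\simeq kHe$ or $M^\vee\otimes_{kHe}M\simeq kC_H(P)e_P$ must hold, where $M^\vee=\Homom_k(M,k)$ carries the transposed bimodule structure (the group algebras being symmetric, the various duals of $M$ agree). One-sided projectivity is the easy half: applying Lemma~\ref{lem:SlashMod}(iii) to the source triple $((P_1\times P_1)\Delta D,e_D\otimes e_D^\bop,W)$ of $kHe$ produced in Lemma~\ref{lem:source of kGe}, one sees that, after the remaining left action of $P_1$ is forgotten, $M$ has vertex subpair $(\Delta D,e_D\otimes e_D^\bop)$; since $\Delta D\cap(H\times1)=\Delta D\cap(1\times C_H(P))=1$, this forces $M$ to be projective on each side. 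So it remains to prove $M\otimes_{kC_H(P)e_P}M^\vee\simeq kHe$, the other unit isomorphism following by the symmetric argument with $kC_H(P)e_P$ in place of $kHe$. Finally, since $kHe\simeq\Ind_{(H_bP_1\times H_bP_1)\Delta G_b}^{(HP_1\times HP_1)\Delta G}kH_be_b$ (the Fong--Reynolds/Harris equivalence recalled above) and the slash construction commutes with this induction, $M$ is correspondingly induced from $kH_be_b\sla{1\times P_1}$; as induction along a Fong--Reynolds correspondence preserves Morita bimodules and is compatible with the $G/H$-action, we may assume from now on that $b=b'$, $H_b=H$, $G_b=G$, i.e.\ that $b$ is $G$-stable.

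In the stable case I would pin down the relevant sources via nilpotent blocks. The module $M$ is Brauer-friendly (Lemma~\ref{lem:SlashMod}(i)), so its source $V$ at $(\Delta D,e_D\otimes e_D^\bop)$ is an endo-permutation $k\Delta D$-module; hence so is $V^*$, and $[V]+[V^*]=[\Endom_k(V)]=0$ in $\dade(\Delta D)$. It follows that every indecomposable summand of the $k((H\times H)\Delta C_G(P))$-module $M\otimes_{kC_H(P)e_P}M^\vee$ has, at $\Delta D$, a source that is a capped summand of a restriction of $V\otimes_k V^*$, hence of a permutation module, hence trivial; so $M\otimes_{kC_H(P)e_P}M^\vee$ is a $p$-permutation module with vertex subpair $(\Delta D,e_D\otimes e_D^\bop)$. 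On the other hand, $kHe$, regarded as a $(kHe,kHe)$-bimodule over $(H\times H)\Delta G$, is also a $p$-permutation module with that vertex subpair: since $S$ is a $p'$-group, $kSb\simeq\Endom_k(X)$ for the simple $kSb$-module $X$, and the block $b$ of the $p$-nilpotent group $SD$ is nilpotent with defect group $D$, so $X$ extends over $SD$ to a module whose restriction to $D$ is the endo-permutation source $W_b$ of that nilpotent block; consequently $\Res_{\Delta D}(kSb)\simeq\Endom_k(W_b)$ is a permutation $kD$-module, and from $kHe\mid\Ind_{(S\times S)\Delta G}^{(H\times H)\Delta G}(kSb)$ (Lemma~\ref{lem:IsoIndAlg} and Section~\ref{sec:GenDef}) the source of $kHe$ at $\Delta D$ is a capped summand of a permutation module, hence trivial.

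Now both $M\otimes_{kC_H(P)e_P}M^\vee$ and $kHe$ are $p$-permutation modules with vertex subpair $(\Delta D,e_D\otimes e_D^\bop)$, so by \cite[Theorem~20]{Biland2013} they are isomorphic as soon as their Brauer quotients at $(\Delta D,e_D\otimes e_D^\bop)$ agree as $k(C_H(D)\times C_H(D))\Delta N_{C_G(P)}(D,e_D)$-modules. The Brauer quotient of $kHe$ there is $kC_H(D)e_D$, by Brauer's first main theorem. For $M\otimes_{kC_H(P)e_P}M^\vee$ one computes this Brauer quotient by pushing the Brauer functor through the interior-algebra isomorphisms of Lemmas~\ref{lem:mysterious interior structure} and~\ref{lem:IsoIndAlg}, the commutation of $\Br$ with induction along $S\le H\le G$, and the transitivity of the slash construction (Lemma~\ref{lem:TranSlash}); this reduces the computation to the matrix algebra $kSb$ and its analogous slash at $1\times P_1$, where the structure isomorphism $kSb\otimes kSb^\bop\simeq\Endom_k(kSb)$ and the surjectivity of the algebra map $\beta:Z(kSPb)\to Z(kC_S(P)Pb_P)$ established in the proof of Lemma~\ref{lem:mysterious interior structure} yield exactly $kC_S(D)b_D$; propagating back up the chain of inductions gives $\Br_{(\Delta D,e_D\otimes e_D^\bop)}(M\otimes_{kC_H(P)e_P}M^\vee)\simeq kC_H(D)e_D$. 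Hence $M\otimes_{kC_H(P)e_P}M^\vee\simeq kHe$, and symmetrically $M^\vee\otimes_{kHe}M\simeq kC_H(P)e_P$; so $M$ induces a Morita equivalence $kHe\sim kC_H(P)e_P$.

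The resulting equivalence is automatically $G/H$-equivariant: from $S\le H$ and $G=SC_G(P)$ one gets $G=HC_G(P)$ and $G/H\simeq C_G(P)/C_H(P)$, and every module and isomorphism above has been produced over the group $(H\times C_H(P))\Delta C_G(P)$ (or its diagonal analogues, built from the $GP_1$-interior structures of Lemma~\ref{lem:mysterious interior structure}), so the two unit isomorphisms are in particular $\Delta C_G(P)$-equivariant. I expect the main obstacle to be the last Brauer-quotient computation, namely controlling the class sums $K_x$ as they pass through $\Br_{\Delta D}$ so that the slash of $kHe$ ``bottoms out'' at the block algebra $kC_H(D)e_D$ --- this is the counterpart, in the present equivariant setting, of Robinson's computations with the central unit $K_xe$; once it is available, the remaining steps are routine bookkeeping with Dade classes and with induction along $S\le H\le G$.
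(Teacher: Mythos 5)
Your argument breaks down at its central step. You want to conclude $M\otimes_{kC_H(P)e_P}M^\vee\simeq kHe$ from the facts that both sides are $p$-permutation bimodules with vertex subpair $(\Delta D,e_D\otimes e_D^\bop)$ and that their Brauer quotients at that one subpair agree, citing \cite[Theorem~20]{Biland2013}. That theorem is a Green-correspondence-type bijection for \emph{indecomposable} modules with a prescribed (maximal) source triple; it says nothing about a possibly decomposable module such as $M\otimes_{kC_H(P)e_P}M^\vee$, whose indecomposable summands may have strictly smaller vertex subpairs. Summands of smaller vertex --- in particular projective summands --- are killed by $\Br_{\Delta D}$, so matching that single Brauer quotient can at best give $M\otimes_{kC_H(P)e_P}M^\vee\simeq kHe\oplus(\text{lower-vertex part})$, i.e.\ a stable-equivalence-type statement; excluding the extra summands is precisely the jump from stable to Morita, which is the hard point of the whole paper and cannot be obtained by this bookkeeping. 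Two subsidiary claims are also unsupported: that $M\otimes_{kC_H(P)e_P}M^\vee$ is a $p$-permutation module (since $kC_H(P)e_P$ is not separable, this tensor product is not a summand of $M\otimes_kM^\vee$, and its vertices/sources need a Mackey-type analysis), and that its Brauer quotient at $(\Delta D,e_D\otimes e_D^\bop)$ can be computed by ``pushing $\Br$ through'' the tensor product over the block algebra --- the Brauer construction does not commute with $\otimes_B$ without an argument of exactly the kind supplied by Linckelmann's theorem quoted in Section~\ref{sec:GlobalCase}, which is only invoked there for the global stable equivalence.

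The paper's proof avoids all of this by a different mechanism: it never verifies the unit isomorphisms directly. It forms $L=kSb\sla{1\times P_1}$ and $L'=kHb'\sla{1\times P_1}$, uses Lemma~\ref{lem:IsoIndAlg} and the commutation of induction with the Brauer functor to get $L'\simeq\Ind_{(S\times C_S(P))\Delta C_{G_b}(P)}^{(H\times C_H(P))\Delta C_G(P)}L$, and observes that, $kSb$ being a matrix algebra, $L\simeq X\otimes Y^*$ is visibly a Morita bimodule between the matrix algebras $kSb$ and $kC_S(P)b_P$. The equivariant extension theorem of Marcus \cite[Theorem~3.4]{Marcus1996}, combined with Harris \cite[Theorem~1.6]{Harris2007}, then says that the induced bimodule $L'$ induces a Morita equivalence $kHb'\sim kC_H(P)b'_P$, and cutting by $e$ gives that $M=eL'e_P$ induces the ($G/H$-equivariant) Morita equivalence $kHe\sim kC_H(P)e_P$. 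If you want to salvage your outline, you would have to replace the single-subpair Brauer-quotient comparison by an input of this extension/crossed-product type (or by a counting argument pinning down the projective part), not by \cite[Theorem~20]{Biland2013}.
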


\begin{proof}
We apply the slash construction to the $k(SP_1\times SP_1)\Delta G_b$-module $kSb$ and the $k(HP_1\times HP_1)\Delta G$-module $kHb'$ to define a $k(S\times C_S(P))\Delta G_b$-module $L=kSb\sla{1\times P_1}$ and a $k(H\times C_H(P))\Delta C_G(P)$-module $L'=kHb'\sla{1\times P_1}$. We know from Lemma \ref{lem:IsoIndAlg} that there is an isomorphism of $k(HP_1\times HP_1)\Delta G$-modules $kHb'\simeq \Ind_{(SP_1\times SP_1)\Delta G_b}^{(HP_1\times HP_1)\Delta G} kSb$. Moreover $G=SC_G(P)$, so the commutation of induction and the Brauer functor brings an isomorphism of $(HP_1\times C_H(P)P_1)\Delta C_G(P)$-interior algebras
\begin{gather*}
\Ind_{[(SP_1\times C_S(P)P_1)\Delta C_{G_b}(P)]^2}^{[(HP_1\times C_H(P)P_1)\Delta C_G(P)]^2} 
\circ \Br_{1\times P_1} \Endom_k(kSb)
\hspace{5cm} \\ \hspace{5cm}
\to
\Br_{1\times P_1} \circ
\Ind_{[(SP_1\times SP_1)\Delta G_b]^2}^{[(HP_1\times HP_1)\Delta G]^2} 
\Endom_k(kSb).
\end{gather*}
Notice that the $p$-subgroup $P_1$ can be omitted from the induction functors without changing the result. Thus we have an isomorphism of $k(H\times C_H(P))\Delta C_G(P)$-modules 
\[
\Ind_{(S\times C_S(P))\Delta C_{G_b}(P)}^{(H\times C_H(P))\Delta C_G(P)} L
\ \simeq\ 
L'.
\]
Then we look closer at the definition of $L$. Since $kSb$ is a matrix algebra, the structure map of the $(kSb,kSb)$-bimodule $kSb$ is an isomorphism of $(S\times S)$-interior algebras $kSb\otimes (kSb)^\op \to \Endom_k(kSb)$. Applying the Brauer functor $\Br_{1\times P_1}$ turns this into an isomorphism of $(S\times C_S(P))$-interior algebras $kSb\otimes (kC_S(P)b_P)^\op \to \Endom_k(L)$. So we have an isomorphism of $k(S\times C_S(P))$-modules $L\simeq X\otimes Y^*$, where $X$ is a simple module for the matrix algebra $kSb$ and $Y^*$ is the $k$-dual of a simple module for the matrix algebra $kC_S(P)b_P$. 

We deduce that the $k(S\times C_S(P))$-module $L$ induces a Morita equivalence $kSb\sim kC_S(P)b_P$. By \cite[Theorem 3.4]{Marcus1996} and \cite[Theorem 1.6]{Harris2007}, it follows that the induced module $L'$ induces a Morita equivalence $kHb'\sim kC_H(P)b'_P$. Then the non-zero direct summand $eL'e_P$ induces a Morita equivalence $kHe\sim kC_H(P)e_P$.
\end{proof}

\begin{lem}
\label{lem:source of the local M}
The indecomposable $k(H\times C_H(P))\Delta C_G(P)$-module $M$ is Brauer-friendly with source triple $(\Delta D,e_D\otimes e_D^\bop, V)$, where the endo-permutation $k\Delta D$-module $V$ belongs to the class of the Dade group $\dade(\Delta D)$ that is  defined by the Dade $\Delta D$-algebra $kSb\otimes kC_S(P)b_P^\bop$.
\end{lem}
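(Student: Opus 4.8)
The plan is to transport the description of $kHe$ obtained in Lemma~\ref{lem:source of kGe}, together with the Morita-equivalence computation carried out in the proof of Lemma~\ref{lem:LocMoriProof}, down to the diagonal subgroup $\Delta D$. To begin with, $M$ is indecomposable by Lemma~\ref{lem:LocMoriProof}, and being a slashed module of the Brauer-friendly $k(HP_1\times HP_1)\Delta G$-module $kHe$ (Lemma~\ref{lem:source of kGe}), it is Brauer-friendly by Lemma~\ref{lem:SlashMod}\,(i); so only the vertex subpair and the source remain to be identified. I first record that $D\leqslant C_G(P)$ (since $D\leqslant N_G(D)\leqslant C_G(P)$) and that $D$ stabilises the block $b$ (by the choice of $b$), so that $D\leqslant C_{G_b}(P)$; as $S$ is a $p'$-group, $\Delta D\cap(S\times C_S(P))=1$, and hence $\Delta D$ is a subgroup of $(S\times C_S(P))\Delta C_{G_b}(P)$ which maps isomorphically onto the diagonal copy of $D$ in the complement $\Delta C_{G_b}(P)$.

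Next I revisit the key step of the proof of Lemma~\ref{lem:LocMoriProof}: applying the Brauer functor $\Br_{1\times P_1}$ to the structural isomorphism $kSb\otimes(kSb)^{\op}\simeq\Endom_k(kSb)$ produced an isomorphism of $(S\times C_S(P))$-interior algebras $kSb\otimes kC_S(P)b_P^{\bop}\simeq\Endom_k(L)$, where $L=kSb\sla{1\times P_1}$; since this is the $\Br_{1\times P_1}$-image of a $\Delta G_b$-equivariant isomorphism, it is moreover equivariant for the conjugation action of $\Delta C_{G_b}(P)$. Restricting it to the subgroup $\Delta D$ of $(S\times C_S(P))\Delta C_{G_b}(P)$, and using the module structure of $L$ to equip $\Endom_k(\Res_{\Delta D}L)$ with its interior $\Delta D$-structure, I obtain an isomorphism of interior $\Delta D$-algebras $\Res_{\Delta D}(kSb\otimes kC_S(P)b_P^{\bop})\simeq\Endom_k(\Res_{\Delta D}L)$. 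In particular the left-hand side is a Dade $\Delta D$-algebra, and I let $V$ be the capped indecomposable endo-permutation $k\Delta D$-module it defines, equivalently the unique capped indecomposable direct summand of the endo-permutation module $\Res_{\Delta D}L$. By construction $V$ represents the class of $\dade(\Delta D)$ named in the statement.

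It remains to check that $(\Delta D,e_D\otimes e_D^{\bop},V)$ is actually a source triple of $M$. The module $L$ is a slashed module of the endo-$p$-permutation $k(SP_1\times SP_1)\Delta G_b$-module $kSb$ (proof of Lemma~\ref{lem:source of kGe}), hence is itself endo-$p$-permutation, and therefore so is the induced module $\Ind_{(S\times C_S(P))\Delta C_{G_b}(P)}^{(H\times C_H(P))\Delta C_G(P)}L$ (by Urfer's criterion for the induction of endo-$p$-permutation modules, argued as in Lemma~\ref{lem:source of kGe}); as this induced module is isomorphic to $kHb'\sla{1\times P_1}$ and $M$ is, by the proof of Lemma~\ref{lem:LocMoriProof}, an indecomposable direct summand of it, $M$ is endo-$p$-permutation. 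Applying Lemma~\ref{lem:SlashMod}\,(iii) to the slash $M=kHe\sla{1\times P_1}$ and to the source triple $((P_1\times P_1)\Delta D,e_D\otimes e_D^{\bop},W)$ of $kHe$, the source of $M$ divides the $\Delta D$-restriction of the $(1\times P_1)$-slash of $W$ (once the auxiliary left $P_1$-action is forgotten); and since $W$ divides $\Res^{(SP_1\times SP_1)\Delta G_b}_{(P_1\times P_1)\Delta D}kSb$, this slash divides $\Res L$, so the source of $M$ divides $\Res_{\Delta D}L$. Finally, because $M$ induces a Morita equivalence between $kHe$ and $kC_H(P)e_P$, two block algebras with common defect group $D$ (recall that $e_P=\br_P(e)$ and $N_G(D)\leqslant C_G(P)$), its vertex subpair must be the full one $(\Delta D,e_D\otimes e_D^{\bop})$; hence its source, being the unique capped indecomposable direct summand of the endo-permutation module $\Res_{\Delta D}L$, is precisely $V$. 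This establishes the stated source triple.

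The step I expect to require the most care is the group-theoretic bookkeeping around the auxiliary $p$-group $P_1$: verifying that $\Delta D$ embeds into $(S\times C_S(P))\Delta C_{G_b}(P)$ as described, that the interior algebra isomorphism of Lemma~\ref{lem:LocMoriProof} remains an isomorphism of interior $\Delta D$-algebras after restriction, and that the slash construction is compatible both with restriction to $\Delta D$ and with cutting by $e\otimes e_P^{\bop}$, so that the source of the summand $M$ is genuinely the cap of $\Res_{\Delta D}L$ and not a proper direct summand of it. I anticipate no conceptual obstacle here, only the need to keep track of which of the two copies of $P_1$ acts on which tensor factor.
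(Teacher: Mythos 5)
Your identification of the Dade-group class of the source is essentially the paper's: Brauer-friendliness of $M$ via Lemma~\ref{lem:SlashMod}\,(i), then Lemma~\ref{lem:SlashMod}\,(iii) applied to $M=kHe\sla{1\times P_1}$ together with $\Br_{1\times P_1}(\Endom_k(kSb))\simeq kSb\otimes kC_S(P)b_P^\bop$, so that the source divides (the $\Delta D$-restriction of) a simple module of that matrix algebra. The gap lies in how you pin down the vertex subpair $(\Delta D,e_D\otimes e_D^\bop)$, and it is twofold. First, the claim that $\Ind_{(S\times C_S(P))\Delta C_{G_b}(P)}^{(H\times C_H(P))\Delta C_G(P)}L$, hence $M$, is endo-$p$-permutation ``by Urfer's criterion, argued as in Lemma~\ref{lem:source of kGe}'' does not transfer: that argument works only for induction up to the stabiliser $(H_b\times C_{H_b}(P))\Delta C_{G_b}(P)$ of $b\otimes b_P^\bop$, because it rests on the conjugating pair $(g,h)$ fixing this block, so that both restrictions are the unique simple module of one block of a $p$-nilpotent group. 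For $(g,h)$ outside the stabiliser the two restrictions are simple modules of different blocks, and their compatibility is precisely what is in doubt; this is why the paper goes from the stabiliser to the full group via Harris's vertex-preserving Morita equivalence and Lemma~\ref{lem:block b fusion control}, obtaining only Brauer-friendliness (fusion-stability in the Brauer category of $e\otimes e^\bop$), not endo-$p$-permutation-ness.

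Second, and more seriously, the decisive step ``since $M$ induces a Morita equivalence between $kHe$ and $kC_H(P)e_P$, two block algebras with common defect group $D$, its vertex subpair must be the full one'' is unjustified and rests on a false premise: $D\leqslant H$ is not among the hypotheses of Section~\ref{sec:local case} (and fails in the intended application of Section~\ref{sec:SourceGlue}, where $H$ plays the role of $C_G(Q)$ while the defect group of $e_Q$ in $N_G(Q,e_Q)$ need not centralise $Q$), so $kHe$ and $kC_H(P)e_P$ are in general sums of blocks of $H$ and of $C_H(P)$ whose defect groups only have order dividing $|D\cap H|$. Moreover, even where the standard ``Morita bimodule has twisted-diagonal vertex of full defect order'' argument applies, it controls the vertex of $\Res^{(H\times C_H(P))\Delta C_G(P)}_{H\times C_H(P)}M$, whereas the lemma asserts a vertex \emph{subpair} of $M$ over the larger group, and $\Delta D\leqslant\Delta C_G(P)$ need not even be conjugate into $H\times C_H(P)$. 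The paper closes exactly this point differently: by transitivity of the slash construction, any $M\sla{R,f}$ is a $((1\times P_1)R,\ldots)$-slashed module attached to $kHe$, hence is non-zero if and only if $(R,f)$ is contained, up to conjugation, in $(\Delta D,e_D\otimes e_D^\bop)$; this non-vanishing criterion is what yields the vertex subpair, and your argument needs it (or a substitute) to be complete. Once the vertex is correctly established, your final remark that the source must then be the cap of $\Res_{\Delta D}L$ is fine.
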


\begin{proof}
The indecomposable $k(HP_1\times HP_1)\Delta G$-module $kHe$ is Brauer-friendly with source triple $((P_1\times P_1)\Delta D,e_D\otimes e_D^\bop,W)$. We know from Lemma \ref{lem:SlashMod} that the slashed module $M$ is Brauer-friendly. For any subpair $(R,f)$ of the maximal $e\otimes e_P^\bop$-subpair $(D\times D,e_D\otimes e_D^\bop)$ in the group $(H\times C_H(P))\Delta C_G(P)$, the transitivity of the slash construction shows that an $(R,f)$-slashed module $M\sla{R, f}$ attached to $M$ is also a $(1\times P_1)R, \br_{1\times P_1}(f))$-slashed module attached to $kHe$. It follows that $M\sla{R,f}$ is non-zero if, and only if, the subpair $(R,f)$ is contained in $(\Delta D,e_D\otimes e_D^\bop)$ up to conjugation. Thus $(\Delta D,e_D\otimes e_D^\bop)$ is a vertex subpair of $M$.

Let $V$ be the source of $M$ with respect to the above vertex subpair. By Lemma \ref{lem:SlashMod} (iii), the endo-permutation $k\Delta D$-module $V$ is compatible with the slashed module $\Res^{(P_1\times P_1)\Delta D}_{\Delta D}W\sla{1\times P_1}$.
Moreover, we know from Lemma \ref{lem:source of kGe} that $W$ is a capped indecomposable direct summand of the $k\Delta D$-module $kSb$. We have $\Endom_k(kSb)\simeq kSb\otimes kSb^\bop$, so $\Br_{1\times P_1}(\Endom_k(kSb)) \simeq kSb\otimes kC_S(P)b_P^\bop$. Thus the $k\Delta D$-module $V$ is isomorphic to a direct summand of a simple module for the matrix algebra $kSb\otimes kC_S(P)b_P^\bop$.
\end{proof}

Finally, the uniqueness statement of Theorem \ref{thm:local equivariant Morita equivalence} (iii) follows from Lemma \ref{lem:UnicM}.

\section{Gluing sources}
\label{sec:SourceGlue}

In this section, we work over the local ring $\ringO$. Let $e$ be a block of a finite group $G$, and $(P,e_P)$ be an $e$-subpair of $G$.  We choose, once and for all, a maximal $e$-subpair $(D,e_D)$ that contains $(P,e_P)$, and we assume that the centraliser $C_G(P)$ strongly controls the $e$-fusion in $G$ with respect to the maximal subpair $(D,e_D)$. For any subgroup $Q$ of $D$, we denote by $e_Q$ the unique block of the centraliser $C_G(Q)$ such that $(Q,e_Q)\leqslant (D,e_D)$. 

Let $Q\neq 1$ be a non-trivial subgroup of $D$. Then the $e$-subpair $(N_D(Q),e_{N_D(Q)})$ may be seen as an $e_Q$-subpair of the group $N_G(Q,e_Q)$, although it needs not be maximal. By assumption, we have $N_G(Q,e_Q)\leqslant O_{p'}(C_G(Q)) C_G(P)$. Let $S_Q$ be any normal $p'$-subgroup of $N_G(Q,e_Q)$ such that $S_Q\leqslant C_G(Q)$ and  $N_G(Q,e_Q) \leqslant S_Q \, C_G(P)$. Let $\bar b_Q$ be an $N_D(Q)$-stable block of the group $S_Q$ such that the block $\bar e_{N_D(Q)}$ of $C_G(N_D(Q))$ covers the block $\br_{N_D(Q)}(\bar b_Q)$ of $C_S(N_D(Q))$. The diagonal conjugation action of the group $N_D(Q)$ on the matrix algebra $kS_Q\bar b_Q\otimes kC_{S_Q}(P)\br_P(\bar b_Q)^\bop$ makes it a Dade $N_D(Q)/Q$-algebra, since the normal subgroup $Q$ acts trivially. Let $v_Q$ be the corresponding class in the Dade group $\dade(N_D(Q)/Q)$.

\begin{lem}
\label{lem:compatible sources}
With the notations of \cite{BoucThevenaz},
\begin{enumerate}[(i)]
\item The class $v_Q\in\dade(N_D(Q))$ is independent of the choice of $S_Q$ and $b_Q$. 
\item If $Q\triangleleft R$ are non-trivial subgroups of $D$ , then 
\[
\Defres^{N_D(Q)/Q}_{N_D(Q,R)/R} v_Q = \Res^{N_D(R)/R}_{N_D(Q,R)/R} v_R.
\]
\item If $g\in G$ is such that $\,^g(Q,e_Q)\leqslant (D,e_D)$, then 
\[
\Res^{N_D(Q)}_{N_{D\cap D^g}(Q)} v_Q = \Res^{N_{D^g}(Q)}_{N_{D\cap D^g}(Q)} g^{-1}\cdot v_{^{\hspace{.1mm}g\!}Q}.
\]
\end{enumerate}
\end{lem}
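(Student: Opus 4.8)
The three assertions are ``coherence'' statements for the family $(v_Q)_{1\neq Q\leqslant D}$; the strategy is to reduce each of them to the analogous statement for the family of Dade algebras before passing to the Dade group, and then to exploit the uniqueness built into Lemma~\ref{lem:choice of the block b} and the one-to-one Brauer correspondence $b\leftrightarrow\br_Q(b)$ between $R$-stable blocks of a $p'$-group and blocks of the relevant centraliser. For (i), I would argue exactly as in Theorem~\ref{thm:local equivariant Morita equivalence}(i): given two choices $(S_Q,\bar b_Q)$ and $(S_Q',\bar b_Q')$, first replace $S_Q$ and $S_Q'$ by $S_Q S_Q'$ (a normal $p'$-subgroup of $N_G(Q,e_Q)$ still contained in $C_G(Q)$, still satisfying the factorisation), so one may assume $S_Q=S_Q'$; then the two $N_D(Q)$-stable blocks $\bar b_Q,\bar b_Q'$ of $S_Q$ are both sent by $\br_{N_D(Q)}$ to a block of $C_{S_Q}(N_D(Q))$ covered by $\bar e_{N_D(Q)}$, hence they are $C_G(N_D(Q))$-conjugate; since $C_G(N_D(Q))$ normalises $S_Q$ and acts on the pair of matrix algebras, the resulting Dade $N_D(Q)/Q$-algebras are isomorphic, so $v_Q$ is well-defined. (A cleaner alternative: observe that $kS_Q\bar b_Q$ and $kC_{S_Q}(P)\br_P(\bar b_Q)$ are, up to the canonical identifications of Section~\ref{sec:local case}, determined by the block $e_Q$ of the group $N_G(Q,e_Q)$ and its fusion data, so $v_Q$ depends only on $(Q,e_Q)$; this is really the content of Theorem~\ref{thm:local equivariant Morita equivalence}(i) applied inside $N_G(Q,e_Q)$.)

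For (iii), the key point is that the hypothesis $^g(Q,e_Q)\leqslant(D,e_D)$ forces $^g(N_{D\cap D^g}(Q),\ldots)\leqslant(D,e_D)$-compatible data, so that conjugation by $g$ identifies a choice of $(S_Q,\bar b_Q)$ restricted to $N_{D\cap D^g}(Q)$ with a choice of $(S_{{}^gQ},\bar b_{{}^gQ})$ restricted to $N_{D^g}(Q)$: concretely, $^gS_Q$ is a legitimate choice of $p'$-subgroup for $N_G(^gQ,e_{{}^gQ})$, and $^g\bar b_Q$ is an $N_{D^g}(^gQ)$-stable block of it which, by the covering relation transported by $g$ (exactly as in the proof of Lemma~\ref{lem:block b fusion control}), is $C_G(^gQ)$-conjugate to $\bar b_{{}^gQ}$. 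Restricting the two Dade algebras to $N_{D\cap D^g}(Q)$ (resp.\ its image) and noting that $g$ induces a diagonal isomorphism of these matrix algebras intertwining the two $N_{D\cap D^g}(Q)$-actions, one obtains the displayed equality in the Dade group after applying $\Res$. Part~(ii) is the hardest of the three and the one I expect to require genuine work. Here $Q\triangleleft R$, and one must compare $v_Q$, an invariant manufactured inside $N_G(Q,e_Q)$, with $v_R$, manufactured inside $N_G(R,e_R)$; the two ambient $p'$-subgroups $S_Q\leqslant C_G(Q)$ and $S_R\leqslant C_G(R)$ are a priori unrelated, and $C_G(R)\leqslant C_G(Q)$ only in one direction. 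The plan is to choose them compatibly: take $S_Q$ first, then take $S_R$ to be a normal $p'$-subgroup of $N_G(R,e_R)\cap N_G(Q,e_Q)$ contained in $S_Q\cap C_G(R)$ with $N_{N_G(Q,e_Q)}(R,e_R)\leqslant S_R\,C_G(P)$ — such $S_R$ exists because $C_G(P)$ strongly controls fusion, so $N_{N_G(Q,e_Q)}(R,e_R)\leqslant O_{p'}(C_G(R))\,C_G(P)$ and the relevant intersection of $p'$-radicals is still ``large enough''. With such a compatible choice, $\bar b_R$ can be taken to be $\br_R$ of a suitable block of $S_Q$ (lying under $\bar b_Q$ in the appropriate sense), and then the Brauer-quotient computation for the matrix algebra $kS_Q\bar b_Q$ with respect to $R/Q$ yields exactly $kC_{S_Q}(R)\br_R(\bar b_Q)$, matching $kS_R\bar b_R$ after the identifications — and likewise for the $\br_P$-twisted factor. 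Unwinding this, the transitivity $\Br_R=\Br_{R/Q}\circ\Br_Q$ on these algebras, combined with the behaviour of $\Defres$ and $\Res$ on the Dade group (the Dade-group analogue of ``take $Q$-fixed points, then $R/Q$-fixed points''), gives the identity $\Defres^{N_D(Q)/Q}_{N_D(Q,R)/R}v_Q=\Res^{N_D(R)/R}_{N_D(Q,R)/R}v_R$.

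The main obstacle, therefore, is the simultaneous/compatible choice of the auxiliary $p'$-subgroups in part~(ii): one must check that a single $p'$-subgroup can be made to work for both the $Q$-level and the $R$-level constructions, that the corresponding blocks can be chosen coherently (using the covering relations established in the proof of Lemma~\ref{lem:choice of the block b} and Brauer's first main theorem for $p'$-groups), and that the Brauer functor applied to the matrix algebra $kS_Q\bar b_Q$ at the subgroup $R$ (through $R/Q$) really does reproduce the matrix algebra attached to the $R$-level data. Once this bookkeeping is in place, each of (i), (ii), (iii) reduces to a transitivity or conjugation-equivariance property of the Brauer functor on matrix algebras, which then transfers verbatim to the Dade group via the definitions of $\Res$, $\Defres$ and the $G$-action recalled from \cite{BoucThevenaz}. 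Throughout, the identification $N_G(P,e_P)=C_G(P)$ and the equivalence of Brauer categories $\catbr(G,e)\simeq\catbr(C_G(P),e_P)$ forced by (strong) control of fusion are what make the various normaliser subgroups $N_D(Q)$, $N_D(R)$, $N_D(Q,R)$ interact cleanly with the $p'$-subgroups $S_Q$, $S_R$.
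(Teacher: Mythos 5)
Your treatment of (i), in its ``cleaner alternative'' form---apply Theorem \ref{thm:local equivariant Morita equivalence}(i) inside $N_G(Q,e_Q)$---is exactly the paper's argument: the inflation of $v_Q$ is realised as the source class of the canonical Brauer-friendly module $M_Q$ of that theorem, and Lemma \ref{lem:BFModSubpair} makes that class independent of the choice of $S_Q$ and $\bar b_Q$. Your first, direct argument for (i) is incomplete as stated: after replacing $S_Q$ and $S'_Q$ by $S_QS'_Q$ the blocks and matrix algebras change, and comparing the class computed from $(S_Q,\bar b_Q)$ with the one computed from the overgroup is already the nontrivial point, so ``one may assume $S_Q=S'_Q$'' is not free; it is the canonical-module argument that settles it.

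The genuine gap is in (ii), which you rightly single out as the crux. Your plan is to choose the auxiliary subgroups nested, $S_R\leqslant S_Q\cap C_G(R)=C_{S_Q}(R)$, so that the Brauer quotient of $kS_Q\bar b_Q\otimes kC_{S_Q}(P)\br_P(\bar b_Q)^\bop$ literally reproduces the $R$-level algebra. But $v_R$ is defined from a subgroup $S_R$ that is normal in the \emph{full} normaliser $N_G(R,e_R)$ and satisfies $N_G(R,e_R)\leqslant S_R\,C_G(P)$; a subgroup of $C_{S_Q}(R)$ is only normalised by subgroups of $N_G(Q,R,e_R)$ (since $S_Q$ is only normal in $N_G(Q,e_Q)$), and $O_{p'}(C_G(R))$ need not be contained in $S_Q$, so the compatible choice you postulate need not exist. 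Even if you work only inside $N_G(Q,R,e_R)$, where such an $S_R$ does make sense, identifying the class it defines with $\Res^{N_D(R)/R}_{N_D(Q,R)/R}v_R$ is precisely an independence-of-choice statement that your proposal never supplies (your (i) covers only the groups $N_G(Q,e_Q)$ with the $p$-group $N_D(Q)$). The paper's proof needs no compatible choice: it introduces the intermediate group $G_{Q,R}=N_G(Q,R,e_R)$ and checks that both the $Q$-derived data $(C_{S_Q}(R),\br_R(\bar b_Q))$, whose Dade algebra computes $\Defres^{N_D(Q)/Q}_{N_D(Q,R)/R}v_Q$, and any genuine $R$-level data $(S_R,\bar b_R)$, whose Dade algebra computes $\Res^{N_D(R)/R}_{N_D(Q,R)/R}v_R$ because $S_R\trianglelefteq G_{Q,R}$ and $G_{Q,R}=S_R\,C_{G_{Q,R}}(P)$, are admissible choices in one and the same local situation; the equality then follows from the same independence mechanism as in (i), applied at the level of $G_{Q,R}$ and $N_D(Q,R)$. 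Your sketch of (iii) has the analogous soft spot: the element conjugating ${}^g\bar b_Q$ to $\bar b_{{}^gQ}$ lies in $C_G({}^gQ)$ but need not centralise the acting $p$-group, so it does not directly yield an isomorphism of Dade algebras; the paper again resolves this by the independence argument (``essentially the same'' as (ii)) rather than by a direct algebra isomorphism.
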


\begin{proof}
Let us fix a non-trivial $p$-subgroup $Q$ of the defect group $D$. We write $G_Q=N_G(Q,e_Q)$ and $H_Q=C_G(Q)$. By assumption, we have  the factorisation $G_Q = S_QC_{G_Q}(P)$, so that all the assumptions of Section \ref{sec:local case} are satisfied. We denote by $M_Q$ the indecomposable Brauer-friendly $k(H_Q\times C_{H_Q}(P))\Delta C_{G_Q}(P)$-module of Theorem \ref{thm:local equivariant Morita equivalence}. Let $V_Q$ be a capped indecomposable direct summand of the $k\Delta N_D(Q)$-module $e_{N_D(Q)}M_Qe_{N_D(Q)}$, which we identify to a $kN_D(Q)$-module through the diagonal isomorphism. We know from Theorem \ref{thm:local equivariant Morita equivalence} that $V_Q$ exists and is an endo-permutation $kN_D(Q)$-module that belongs to the class $\Inf_{N_D(Q)/Q}^{N_D(Q)} v_Q$, and we know from Lemma \ref{lem:BFModSubpair} that the isomorphism class of $V_Q$ depends only on the Brauer-friendly-module $M_Q$ and the subpair $(Q,e_Q)$. This proves (i).

We now take $1\neq Q\triangleleft R\leqslant D$.
On the one hand, $S_Q$ is a normal $p'$-subgroup of $G_Q$ such that $S_Q\leqslant H_Q$ and $G_Q= S_Q H_Q$, and $\bar b_Q$ is an $N_D(Q)$-stable block of $kS_Q$ such that the block $e_{N_D(Q)}$ covers $\br_{N_D(Q)}(\bar b_Q)$. 
We set $G_{Q,R}=N_G(Q,R,e_R)$, $S_{Q,R} = C_{S_Q}(R)$ and $b_{Q,R}=\br_R(b_Q)$. 
Then $S_{Q,R}$ is a normal $p'$-subgroup of $G_{Q,R}$ such that $S_{Q,R}\leqslant H_R$ and $G_{Q,R}= S_{Q,R} C_{G_{Q,R}}(P)$, and $b_{Q,R}$ is an $N_D(Q,R)$-stable block of $S_{Q,R}$ such that the block $e_{N_D(Q,R)}$ covers $\br_{N_D(Q,R)}(b_{Q,R})$. 
Let $v_{Q,R}\in\dade(N_D(Q,R)/R)$ be the class defined by the Dade $N_D(Q,R)$-algebra $kS_{Q,R}\bar b_{Q,R}\otimes kC_{S_{Q,R}}(P)\br_P(\bar b_{Q,R})^\bop$, \emph{i.e.}, $v_{Q,R} = \Defres^{N_D(Q)/Q}_{N_D(Q,R)/R} v_Q$.

On the other hand, let $S_R$ be a normal $p'$-subgroup of $G_R$ such that $S_{R}\leqslant H_R$ and $G_R= S_{R} C_{G_R}(P)$, and $\bar b_R$ be an $N_D(R)$-stable block of $S_{R}$ such that the block $\bar e_{N_D(R)}$ covers $\br_{N_D(R)}(\bar b_{R})$. Then $S_R$ is also a normal $p'$-subgroup of $G_{Q,R}$ such that $S_{R}\leqslant C_G(R)$ and $G_{Q,R}= S_{R} C_{G_{Q,R}}(P)$, and $b_{R}$ is an $N_D(Q,R)$-stable block of $S_R$ such that the block $\bar e_{N_D(Q,R)}$ covers $\br_{N_D(Q,R)}(\bar b_{R})$. Since the class $v_{Q,R}$ is independent of the choice of the subgroup $S_{Q,R}$ and of the block $b_{Q,R}$, it follows that $v_{Q,R}=\Res^{N_D(R)/R}_{N_D(Q,R)/R} v_R$, and (ii) is proven. The proof of (iii) is essentially the same.
\end{proof}

The rest of this article depends on the following assumption.

\begin{hyp}
\label{hyp:gluing sources}
There exists a capped indecomposable endo-permutation $\ringO D$-module $V$ such that the triple $(D,e_D,V)$ is fusion-stable in $G$ and that, for any non-trivial subgroup $Q$ of $D$, 
\[
\Defres^{\,D}_{N_D(Q)/Q} [k\otimes_\ringO V] = v_Q.
\]
\end{hyp}

\begin{lem}
If $e$ is the principal block of the group $G$, or if the defect group $D$ is abelian, or if the prime $p$ is odd and the poset $\mathcal A_{\geqslant 2}(D)$ of elementary abelian subgroups of $D$ of rank at least 2 is connected, then Assumption \ref{hyp:gluing sources} is satisfied.
\end{lem}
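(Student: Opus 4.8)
The plan is to verify Assumption~\ref{hyp:gluing sources} by appealing to the gluing theorem of \cite{BoucThevenaz}, which states that a compatible family of local endo-permutation classes patches into a global endo-permutation $\ringO D$-module provided a certain cohomological obstruction vanishes. Concretely, Lemma~\ref{lem:compatible sources}~(ii)--(iii) shows that the family $(v_Q)_{1\neq Q\leqslant D}$ is a compatible family of classes in the sense of \cite{BoucThevenaz}: part~(ii) gives compatibility under deflation--restriction along inclusions $Q\triangleleft R$, and part~(iii) gives $G$-equivariance (fusion-stability) of the family. By the main theorem of \cite{BoucThevenaz}, such a family lifts to a fusion-stable class $[V]\in\dade(D)$ with $\Defres^{\,D}_{N_D(Q)/Q}[k\otimes_\ringO V] = v_Q$ for all $Q$, precisely as required, as soon as the obstruction group in that theorem vanishes. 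So the whole lemma reduces to checking that this obstruction group is trivial in each of the three listed cases.

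First I would handle the principal-block case. Here one can bypass the obstruction entirely: this is exactly Brou\'e's situation, and the global source $V$ may be taken to be the trivial module $\ringO$ (or, more precisely, it is known in the principal block setting that the relevant local Dade classes $v_Q$ are themselves trivial, being computed from $p'$-groups acting on matrix algebras coming from principal blocks of $p$-nilpotent local subgroups, whose Morita equivalences with the trivial source are endo-trivial-free). Thus Assumption~\ref{hyp:gluing sources} holds with $[V]=0$, recovering the input to Brou\'e's theorem.

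Next, the abelian defect case. When $D$ is abelian, every non-trivial subgroup $Q\leqslant D$ is normal in $D$, so $N_D(Q)=D$ and $N_D(Q)/Q = D/Q$; the compatibility relations of Lemma~\ref{lem:compatible sources}~(ii) become relations purely among the groups $D/Q$ inside $\dade(D/Q)$, and the poset of relevant subgroups is simply the poset of non-trivial subgroups of $D$. The obstruction group of \cite{BoucThevenaz} for an abelian $p$-group is known to vanish (the relevant limit of Dade groups over the subgroup poset has no higher obstruction because one can reconstruct $[V]$ directly as $\Defres^{\,D}_{D/Z}v_Z$ for $Z$ a minimal subgroup, checking consistency via~(ii)). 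I would make this explicit: pick any central subgroup $Z$ of order $p$, set $[V]$ to be the inflation to $D$ of $v_Z\in\dade(D/Z)$ suitably read off, and verify using~(ii) and~(iii) that this choice is forced and fusion-stable.

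Finally, the case $p$ odd with $\mathcal A_{\geqslant 2}(D)$ connected. This is where the real content of \cite{BoucThevenaz} is used: their theorem identifies the obstruction to gluing with (the non-vanishing of) a group built from $\dade^{\Omega}$-type invariants indexed by the conjugacy classes of non-cyclic elementary abelian subgroups of $D$ and the connectedness properties of $\mathcal A_{\geqslant 2}(D)$; for odd $p$ and connected $\mathcal A_{\geqslant 2}(D)$ this obstruction group is trivial. The observation that $Z(D)$ non-cyclic implies $\mathcal A_{\geqslant 2}(D)$ connected (any elementary abelian subgroup of rank $\geqslant 2$ is linked through $\Omega_1(Z(D))$) is the parenthetical ``e.g.'' and should be noted. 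The main obstacle, and the only non-formal step, is correctly matching the hypotheses of Lemma~\ref{lem:compatible sources} to the precise hypotheses of the gluing theorem of \cite{BoucThevenaz} and quoting the vanishing of their obstruction group in exactly these three regimes; everything else is bookkeeping with $\Defres$ and $\Res$ already carried out in Lemma~\ref{lem:compatible sources}. I would therefore structure the proof as: (1) restate the family $(v_Q)$ is compatible and $G$-stable, citing Lemma~\ref{lem:compatible sources}; (2) invoke \cite{BoucThevenaz} to reduce to vanishing of the obstruction group; (3) dispatch the three cases, the first two by an explicit direct construction of $[V]$ and the third by citing the relevant vanishing result from \cite{BoucThevenaz}.
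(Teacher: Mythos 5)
Your plan works only for the third case, and even there it is incomplete; in the abelian case the construction you sketch would fail, and the paper takes a different route. Concretely: the gluing theorem of \cite{BoucThevenaz} quoted in the paper is an exact sequence for the \emph{torsion} Dade group, $0\to\dade_t(D)\to\varprojlim_{1\neq Q\leqslant D}\dade_t(N_D(Q)/Q)\to\tilde H^0(\mathcal A_{\geqslant 2}(D))\to 0$, so before invoking it you must prove that each class $v_Q$ is torsion; the paper does this via \cite[Proposition 4.4]{BoltjeKuelshammer}, observing that $v_Q$ contains the source of a simple module of the $p$-nilpotent group $(S_Q\times C_{S_Q}(P))\rtimes N_D(Q)/Q$. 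You never address this. Moreover, the theorem only produces a class $v\in\dade_t(D)$ with the prescribed deflation-restrictions: fusion-stability of the triple $(D,e_D,V)$ is \emph{not} part of its conclusion, contrary to what you assert. The paper proves it separately: for a subpair $(R,e_R)$ and $g$ with $^g(R,e_R)\leqslant(D,e_D)$ it compares $\Res^D_R v$ with $\Res^{D^g}_R g^{-1}\cdot v$ using Lemma \ref{lem:compatible sources}(iii) together with the \emph{injectivity} part of the same exact sequence applied to $R$, and then upgrades equality of Dade classes to an isomorphism of restricted modules via the determinant-$1$ lift of \cite{Bouc2006} and \cite{Thevenaz1995} — this is where $p$ odd is used a second time. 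Your proposal treats all of this as bookkeeping, but it is a genuine part of Assumption \ref{hyp:gluing sources}.

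The abelian case is where your approach actually breaks. Setting $[V]$ to be the inflation of $v_Z$ for a single minimal (central) subgroup $Z$ cannot work: there is no reason why $\Defres^{\,D}_{D/Q}\Inf_{D/Z}^{\,D}v_Z$ should equal $v_Q$ for subgroups $Q$ other than $Z$, and the compatibility relations of Lemma \ref{lem:compatible sources}(ii) do not force this. Also, appealing to \cite{BoucThevenaz} here is doubly problematic: the lemma allows abelian $D$ with $p=2$, where the torsion-gluing statement is not available in the form quoted, and again you would need the $v_Q$ to be torsion. The paper avoids \cite{BoucThevenaz} entirely in this case: since $N_D(Q)=D$ for all $Q$, it uses Puig's M\"obius-type inversion, $v=\sum_{1\neq Q\leqslant D}\mu(Q)\Inf_{D/Q}^{\,D}v_Q$, and cites \cite[Proposition 3.6]{Puig1991} to get both $\Defres^{\,D}_{D/Q}v=v_Q$ for all $Q$ and $N_G(D,e_D)$-stability, which suffices for fusion-stability because for abelian $D$ the normaliser $N_G(D,e_D)$ controls the $e$-fusion. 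The principal-block case you handle essentially as the paper does (all $v_Q$ trivial, $V=\ringO$), so the substantive gaps are the abelian-case construction and, in the odd-$p$ case, the torsion verification and the fusion-stability argument.
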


\begin{proof}
Firstly, we suppose that $e$ is the principal block of the group $G$. For any non-trivial subgroup $Q$ of the defect group $D$, the principal block $e_Q$ of the group $C_G(Q)$ covers the principal block $b_Q$ of the $p'$-group $S_Q$, so $v_Q$ is the trivial class in the Dade group $\dade(N_D(Q)/Q)$. Thus we can choose $V$ to be the trivial $\ringO D$-module.

Secondly, we suppose that the defect group $D$ is abelian. Then we have $N_D(Q)=D$ for any subgroup $Q$ of $D$. Following \cite{Puig1991}, we consider the function $\mu$ on the set of non-trivial subgroups of $D$ such that $\sum_{1\neq R\leqslant Q} \mu(R) = 1$ for any non-trivial subgroup $Q$ of $D$. We consider the class
\[
v \ = \ 
\sum_{1\neq Q\leqslant D} \mu(Q) \Inf_{D/Q}^{\,D} v_Q
\ \in\ \dade(D).
\]
By Lemma \ref{lem:compatible sources}, the family $(v_Q)_{1\neq Q\leqslant D}$ satisfies the assumptions of \cite[Proposition 3.6]{Puig1991}. Thus the class $v$ is $N_G(D,e_D)$-stable, and $\Defres^{\,D}_{D/Q} v = v_Q$ for any non-trivial subgroup $Q$ of $D$.
By \cite[Corollary 8.5]{Bouc2006} and  \cite[Lemma 28.1]{Thevenaz1995}, there exists a unique isomorphism class of capped indecomposable endo-permutation $\ringO D$-module $V$ with determinant 1 (\emph{i.e.}, with a structure map that sends the group $D$ into $SL(V)$) such that $v= [k\otimes_\ringO V ]$. Since the defect group $D$ is abelian, the normaliser $N_G(D,e_D)$ controls the $e$-fusion in the group $G$ with respect to the maximal subpair $(D,e_D)$, so the triple $(D,e_D,V)$ is fusion-stable in $G$.

Thirdly, we suppose that the prime $p$ is odd and that the poset $\mathcal A_{\geqslant 2}(D)$  is connected. For any subgroup $Q$ of $D$, the class $v_Q\in\dade(N_D(Q)/Q)$ contains the source of a simple module for the $p$-nilpotent group $(S_Q\times C_{S_Q}(P))\rtimes N_D(Q)/Q$. Thus we know from \cite[Proposition 4.4]{BoltjeKuelshammer} that the class $v_Q$ lies in the torsion part $\dade_t(N_D(Q)/Q)$ of the Dade group $\dade(N_D(Q)/Q)$. By \cite[Theorem 1.1]{BoucThevenaz}, there is an exact sequence
\[
0 
\ \to\ 
\dade_t(D)
\ \to\ 
\varprojlim_{1\neq Q\leqslant D} \dade_t(N_D(Q)/Q)
\ \to\ 
\tilde H^0(\mathcal A_{\geqslant 2}(D))
\ \to\ 
0.
\]
By lemma \ref{lem:compatible sources}, the family $(v_Q)_{1\neq Q\leqslant D}$ lies in the direct limit
$\varprojlim_{1\neq Q\leqslant D} \dade_t(N_D(Q)/Q)$ of the above exact sequence. By assumption, the additive group $\tilde H^0(\mathcal A_{\geqslant 2}(D),\mathbb F_2)$ of locally constant $\mathbb F_2$-valued functions on $\mathcal A_{\geqslant 2}(D)$, modulo constant functions, is trivial. Thus there exists a unique class $v$ in the torsion Dade group $\dade_t(D)$ such that $\Defres^{\,D}_{N_D(Q)/Q} v = v_Q$ for any non-trivial subgroup $Q$ of $D$. As above, there is a unique capped indecomposable endo-permutation $\ringO D$-module $V$ with determinant 1 such that the reduction $k\otimes_\ringO V$ belongs to the class $v$.

Let $(R,e_R)$ be a subpair of $(D,e_D)$ and let $g\in G$ be such that $^g(R,e_R)\leqslant (D,e_D)$. Set $w=\Res^{D}_{R} v$ and $w'=\Res^{D^g}_{R} g^{-1}\cdot v$. For any non-trivial subgroup $Q$ of $R$, we have 
\begin{align*}
\Defres^{\,R}_{N_R(Q)/Q} w  
\ &=\ 
\Res^{\,N_D(Q)/Q}_{N_R(Q)/Q} v_Q 
\\ &=\ 
\Res^{\,N_{D^g}(Q)/Q}_{N_R(Q)/Q} g^{-1}\cdot v_{\hspace{.1mm}^gQ}
\ =\ 
\Defres^{\,R}_{N_R(Q)/Q} w'.  
\end{align*}
Then the injectivity of the deflation-restriction map $\dade_t(R) \to \varprojlim_{1\neq Q\leqslant R} \dade_t(N_R(Q))$ implies that $w=w'$. Let $W$ (\emph{resp.} $W'$) be a capped indecomposable direct summand of the restriction $\Res^{D}_{R} V$ (\emph{resp.} $w'=\Res^{D^g}_{R} g^{-1}V$). Since the prime $p$ is odd, the endo-permutation $\ringO R$-modules $W$ and $W'$ must have determinant 1; moreover, the reductions $k\otimes_\ringO W$ and $k\otimes_\ringO W'$ belong to the same class $w=w'\in\dade(R)$. Thus $W$ and $W'$ are isomorphic, and the triple $(D,e_D,V)$ is fusion-stable in the group $G$.
\end{proof}

For a general defect group $D$, the obstruction group $\tilde H^0(\mathcal A_{\geqslant 2}(D),\mathbb F_2)$ needs not be trivial. However, we know from the classification of finite simple groups that the $Z^*_p$-theorem is always true. This implies that Assumption \ref{hyp:gluing sources} is satisfied, at least when the centraliser $C_G(P)$ controls the $p$-fusion in the group $G$ (and not only the $e$-fusion). We do hope that a careful study of the direct image of the family $(v_Q)_{1\neq Q\leqslant D}$ in the obstruction group $\tilde H^0(\mathcal A_{\geqslant 2}(D),\mathbb F_2)$ will show that this direct image is always trivial. This would allow one to prove Theorem \ref{thm:stable equivalence} without any restriction on the defect group $D$.

\section{Obtaining a stable equivalence}
\label{sec:GlobalCase}

With all the conventions of the previous section, we now suppose that Assumption \ref{hyp:gluing sources} is satisfied. We identify $V$ with an $\ringO \Delta D$-module. By Lemma \ref{lem:UnicM}, there is a unique indecomposable Brauer-friendly $\ringO (G\times C_G(P))$-module $M$ with source triple $(\Delta D, e_D\otimes e_D^\bop, V)$ such that the slashed module $M\sla{\Delta P,e_P\otimes e_P^\bop}$ admits the $k(C_G(P)\times C_G(P))$-module $kC_G(P)e_P$ as a direct summand. 

\begin{lem}
\label{lem:global module local Morita}
Let $Q$ be a non-trivial subgroup of the defect group $D$. Then the slashed module $M\sla{\Delta Q,e_Q\otimes e_{PQ}}$ induces a Morita equivalence 
\[
kC_G(Q)e_Q \ \sim\ kC_G(PQ) e_{PQ}
\]
\end{lem}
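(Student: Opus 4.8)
The plan is to identify the bimodule $M\sla{\Delta Q,e_Q\otimes e_{PQ}}$ with the ``local'' module $M_Q$ supplied by Theorem~\ref{thm:local equivariant Morita equivalence}, and then to quote part~(iv) of that theorem. Recall the set-up of Section~\ref{sec:SourceGlue}: the strong control of fusion gives a factorisation $N_G(Q,e_Q)=S_Q\,C_{N_G(Q,e_Q)}(P)$ with $S_Q$ a normal $p'$-subgroup of $N_G(Q,e_Q)$ contained in $H_Q=C_G(Q)$, so that all the hypotheses of Section~\ref{sec:local case} hold for the block $e_Q$ of $G_Q=N_G(Q,e_Q)$ relative to its normal subgroup $H_Q$; here $C_{H_Q}(P)=C_G(PQ)$, the group $N_D(Q)$ is a defect group of $e_Q$ in $G_Q$, and $e_{PQ}=\br_P(e_Q)$. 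Theorem~\ref{thm:local equivariant Morita equivalence} then provides an indecomposable Brauer-friendly $k\bigl(C_G(Q)\times C_G(PQ)\bigr)\Delta C_{G_Q}(P)$-module $M_Q$ with source triple $\bigl(\Delta N_D(Q),\,e_{N_D(Q)}\otimes e_{N_D(Q)}^{\bop},\,V_Q\bigr)$, whose source $V_Q$ lies in the Dade class $\Inf_{N_D(Q)/Q}^{N_D(Q)}v_Q$, such that $M_Q\sla{\Delta P}\simeq kC_G(PQ)e_{PQ}$ and such that $M_Q$ induces a Morita equivalence $kC_G(Q)e_Q\sim kC_G(PQ)e_{PQ}$. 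By Lemma~\ref{lem:UnicM}, applied inside $G_Q$, $M_Q$ is moreover the unique (up to isomorphism) indecomposable Brauer-friendly module over that group with that source triple whose $(\Delta P,e_{PQ}\otimes e_{PQ}^{\bop})$-slash has $kC_G(PQ)e_{PQ}$ as a direct summand; so it suffices to show that $M\sla{\Delta Q,e_Q\otimes e_{PQ}}$ enjoys these same properties.

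First I would compute an iterated slash of $M$. Since $P$ is central in $C_G(P)$ and $Q\leqslant D\leqslant C_G(P)$, the groups $P$ and $Q$ are both normal in $PQ$, so the subpairs satisfy $(\Delta P,e_P\otimes e_P^{\bop})\trianglelefteqslant(\Delta PQ,e_{PQ}\otimes e_{PQ}^{\bop})$ and $(\Delta Q,e_Q\otimes e_{PQ})\trianglelefteqslant(\Delta PQ,e_{PQ}\otimes e_{PQ}^{\bop})$ inside $G\times C_G(P)$. By transitivity of the slash construction (Lemma~\ref{lem:TranSlash}), both $\bigl(M\sla{\Delta P,e_P\otimes e_P^{\bop}}\bigr)\sla{\Delta Q,e_{PQ}\otimes e_{PQ}^{\bop}}$ and $\bigl(M\sla{\Delta Q,e_Q\otimes e_{PQ}}\bigr)\sla{\Delta P,e_{PQ}\otimes e_{PQ}^{\bop}}$ are $(\Delta PQ,e_{PQ}\otimes e_{PQ}^{\bop})$-slashed modules attached to $M$, hence isomorphic up to twisting by a linear character (Lemma~\ref{lem:SlashMod}(ii)). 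By hypothesis $M\sla{\Delta P,e_P\otimes e_P^{\bop}}$ admits $kC_G(P)e_P$ as a direct summand, and $kC_G(P)e_P$ is a $p$-permutation module whose canonical $(\Delta Q,e_{PQ}\otimes e_{PQ}^{\bop})$-slash is the Brauer quotient $\Br_{(\Delta Q,e_{PQ}\otimes e_{PQ}^{\bop})}\bigl(kC_G(P)e_P\bigr)\simeq kC_G(PQ)e_{PQ}$. Hence the $(\Delta P,e_{PQ}\otimes e_{PQ}^{\bop})$-slash of $M\sla{\Delta Q,e_Q\otimes e_{PQ}}$ admits $kC_G(PQ)e_{PQ}$ as a direct summand, up to a linear-character twist.

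Next I would determine the vertex subpair and the source. By Lemma~\ref{lem:SlashMod} the module $M\sla{\Delta Q,e_Q\otimes e_{PQ}}$ is Brauer-friendly, and by part~(iii) of that lemma each of its indecomposable summands has a source triple $(\Delta D',f',V')$ with $(\Delta Q,e_Q\otimes e_{PQ})\leqslant(\Delta D',f')\leqslant(\Delta D,e_D\otimes e_D^{\bop})$ and $V'$ a direct summand of $[\Res^{\Delta D}_{\Delta D'}V]\sla{\Delta Q}$. Since such a summand belongs to the block $e_Q\otimes e_{PQ}^{\bop}$, its vertex is conjugate into the defect group $\Delta N_D(Q)$, and the summand exhibited in the previous step forces one vertex to be exactly $\Delta N_D(Q)$; one then obtains, using the transitivity of the slash construction together with the one-to-one correspondence of \cite[Theorem~20]{Biland2013} in the manner of Lemmas~\ref{lem:CanSlashMod} and~\ref{lem:UnicM}, that $M\sla{\Delta Q,e_Q\otimes e_{PQ}}$ is indecomposable with vertex subpair $(\Delta N_D(Q),e_{N_D(Q)}\otimes e_{N_D(Q)}^{\bop})$. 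Its source then lies in the Dade class $\Defres^{D}_{N_D(Q)/Q}[k\otimes_\ringO V]$, which by Assumption~\ref{hyp:gluing sources} is exactly $v_Q$ — the Dade class of the source $V_Q$ of $M_Q$. As $(D,e_D,V)$ is fusion-stable in $G$, its restriction $(N_D(Q),e_{N_D(Q)},V_Q)$ is fusion-stable in $G_Q$, so Lemma~\ref{lem:BFModSubpair} shows that the source is determined up to isomorphism by this Dade class; hence $M\sla{\Delta Q,e_Q\otimes e_{PQ}}$ has source triple $\bigl(\Delta N_D(Q),e_{N_D(Q)}\otimes e_{N_D(Q)}^{\bop},V_Q\bigr)$.

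Putting the two steps together, $M\sla{\Delta Q,e_Q\otimes e_{PQ}}$ is an indecomposable Brauer-friendly $k\bigl(C_G(Q)\times C_G(PQ)\bigr)\Delta C_{G_Q}(P)$-module with source triple $\bigl(\Delta N_D(Q),e_{N_D(Q)}\otimes e_{N_D(Q)}^{\bop},V_Q\bigr)$ whose $(\Delta P,e_{PQ}\otimes e_{PQ}^{\bop})$-slash has $kC_G(PQ)e_{PQ}$ as a direct summand. By the uniqueness statement of Lemma~\ref{lem:UnicM} it is therefore isomorphic to $M_Q$ up to twisting by a linear character of $C_{G_Q}(P)/PC_G(PQ)$; since such a twist amounts to tensoring with an invertible bimodule, it does not affect whether the module induces a Morita equivalence, and Theorem~\ref{thm:local equivariant Morita equivalence}(iv) gives the asserted equivalence $kC_G(Q)e_Q\sim kC_G(PQ)e_{PQ}$. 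I expect the main obstacle to be the third paragraph: matching the source of the global module's slash with the local source $V_Q$, which is precisely where the gluing Assumption~\ref{hyp:gluing sources} (the conclusion of Section~\ref{sec:SourceGlue}) is indispensable; the remaining work — checking that the ambient groups, blocks and subpairs occurring on the global side restrict correctly to the local situation of Section~\ref{sec:local case} — is routine but requires care.
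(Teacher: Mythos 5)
Your overall strategy --- identify $M\sla{\Delta Q,e_Q\otimes e_{PQ}}$ with the local module $M_Q$ of Theorem \ref{thm:local equivariant Morita equivalence} and invoke its part (iv) --- is the paper's strategy, and your second paragraph (commuting the two slashes via Lemma \ref{lem:TranSlash} and transporting the summand $kC_G(P)e_P$ of $M\sla{\Delta P,e_P\otimes e_P^\bop}$ to a summand $kC_G(PQ)e_{PQ}$ of the $P$-slash of $M\sla{\Delta Q,e_Q\otimes e_{PQ}}$) matches the paper's first step. But there is a genuine gap at the decisive point: you assert that $M\sla{\Delta Q,e_Q\otimes e_{PQ}}$ is indecomposable, citing ``the one-to-one correspondence of Biland's Theorem 20 in the manner of Lemmas \ref{lem:CanSlashMod} and \ref{lem:UnicM}''. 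That correspondence only matches indecomposable modules having the full vertex-source triple $(\Delta D,e_D\otimes e_D^\bop,V)$ with their counterparts over the normaliser of $(\Delta D,e_D\otimes e_D^\bop)$; it says nothing about the slash of $M$ at the strictly smaller subpair $(\Delta Q,e_Q\otimes e_{PQ})$ being indecomposable. A priori $M\sla{\Delta Q,e_Q\otimes e_{PQ}} = M_Q^0\oplus\cdots\oplus M_Q^n$, where only $M_Q^0$ is the summand you have identified with (a character twist of) $M_Q$; your argument therefore only proves that the slashed module \emph{has a direct summand} inducing a Morita equivalence, which is weaker than the lemma and insufficient for the subsequent application of Linckelmann's theorem. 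Eliminating the extra summands is exactly the second half of the paper's proof: a descending induction on the order of $Q$, in which the vertex subpair of a putative extra summand $M_Q^1$ is pinned down (using Lemma \ref{lem:SlashMod}(iii), the nonvanishing of Brauer quotients of $kGe$ and $kC_G(Q)\bar e_Q$, and the control of fusion) to be of the form $(\Delta R',e_{R'}\otimes e_{PR'}^\bop)$ with $R'\leqslant N_D(Q)$; if $Q<R'$ this contradicts the indecomposability of $M\sla{R'}$, known by the induction hypothesis, and if $Q=R'$ then Lemma \ref{lem:DirSumdLift} lifts a summand with vertex subpair $(\Delta Q,e_Q\otimes e_{PQ}^\bop)$ to $M$ itself, contradicting the indecomposability of $M$ with vertex subpair $(\Delta D,e_D\otimes e_D^\bop)$. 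None of this appears in your proposal, and it is not a routine verification.

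A secondary inaccuracy in your third paragraph: belonging to the block $e_Q\otimes e_{PQ}^\bop$ only places a vertex inside a defect group of that block of $C_G(Q)\times C_G(PQ)$, which is a direct product of defect groups, not the diagonal subgroup $\Delta N_D(Q)$. The paper identifies the vertex subpair of the good summand $M_Q^0$ by showing that its slash at a maximal $e_Q$-subpair $(R,f)$ of $N_G(Q,e_Q)$ is non-zero (because $kC_G(PQ)e_{PQ}$ is a summand of $M_Q^0\sla{\Delta P}$ and its Brauer quotient at $(\Delta R,f\otimes f^\bop)$ is $kC_G(R)f\neq 0$), while Lemma \ref{lem:SlashMod}(iii) bounds any vertex subpair above by a conjugate of $(\Delta D,e_D\otimes e_D^\bop)$; together these force the vertex subpair to equal $(\Delta N_D(Q),e_{N_D(Q)}\otimes e_{N_D(Q)}^\bop)$ after full normalisation. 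Your appeal to Assumption \ref{hyp:gluing sources} to match the source with $V_Q$ is in the right spirit, but it only becomes effective once the vertex subpair and the indecomposability issues above have been settled.
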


\begin{proof}
The class $\Defres^D_{D/P} v\in\dade(D/P)$ is trivial, so the slashed module $M\sla{\Delta P,e_P\otimes e_P^\bop}$ is a $p$-permutation $k(C_G(P)\times C_G(P))$-module. Thus we may use, from now on, the slash construction that we have defined in Lemma \ref{lem:CanSlashMod}. For the sake of shortness, whenever $Q$ is a subgroup of the defect group $D$, we write 
\begin{gather*}
C(Q)=(C_G(Q)\times C_G(PQ))
\ \ ;\ \ 
N(Q)=C(Q)\Delta N_G(Q,e_Q)
\ \ ;\ \\ 
M\sla Q = M\sla{\Delta Q,e_Q\otimes e_{PQ}},
\end{gather*}
where the latter is a $kN(Q)$-module.
For any $Q\leqslant D$ and any $g\in G$ such that $^g(Q,e_Q)\leqslant (D,e_D)$, the uniqueness part of Lemma \ref{lem:CanSlashMod} implies that there is an isomorphism of $kN(Q)$-modules $M\sla Q \ \simeq \ (g^{-1},g^{-1})\cdot M\sla{\null^gQ}$.
Thus, up to replacing the subgroup $Q$ by a $G$-conjugate, we may suppose that the subpair $(Q,e_Q)$ is fully normalised in $(D,e_D)$, \emph{i.e.}, that the normaliser subpair $(N_D(Q),e_{N_D(Q)})$ is a maximal $e_Q$-subpair of the group $N_G(Q,e_Q)$. Similarly, for any two subgroups $Q\triangleleft R$ of $D$, the $kN(Q,R)$-modules $M\sla Q\sla R$ and $\Res^{N(R)}_{N(Q,R)} M\sla R$ are isomorphic.

By construction of $M$, we know that the $kC(P)$-module $M\sla P$ admits $kC_G(P)e_P$ as a direct summand. As a consequence, the Brauer quotient $M\sla P\sla Q$ admits the $kN(P,Q)$-module $kC_G(PQ)e_{PQ}$ as a direct summand. By the above remark on the transitivity of the slash construction, it follows that the slashed module $M\sla Q\sla P$ also admits $kC_G(PQ)e_{PQ}$ as a direct summand. Thus there exists an indecomposable direct summand $M_Q^0$ of the $kN(Q)$-module $M\sla Q$ such that the slashed module $M_Q^0\sla P$ admits the $kN(P,Q)$-module $kC_G(PQ)e_{PQ}$ as a direct summand. 

Let $(R,f)$ be a maximal $e_Q$-subpair of the group $N_G(Q,e_Q)$. Then the Brauer quotient $\Br_{(\Delta R,f\otimes f^\bop)}(kC_G(PQ)e_{PQ})\simeq kC_G(R)f$ is non-zero. By transitivity of the slash construction, it follows that the slashed module $M_Q^0\sla{\Delta R,f\otimes f^\bop}$ is non-zero. Moreover, a vertex subpair of $M_Q^0$ must be contained in a conjugate of the vertex subpair $(\Delta D,e_D\otimes e_D^\bop)$ of $M$. Thus $(\Delta R,f\otimes f^\bop)$ is a vertex subpair of $M_Q^0$. Assuming that the subpair $(Q,e_Q)$ is fully normalised in $(D,e_D)$, we deduce from Lemma \ref{lem:SlashMod} (iii) that $(\Delta N_D(Q),e_{N_D(Q)}\otimes e_{N_D(Q)}^\bop,V_Q)$ is a source triple of the indecomposable $kN(Q)$-module $M_Q^0$. Now it follows from Lemma \ref{lem:UnicM} and Theorem \ref{thm:local equivariant Morita equivalence} that the $kN(Q)$-module $M_Q^0$ induces an $N_G(Q,e_Q)/C_G(Q)$-equivariant Morita equivalence
\[
kC_G(Q)\bar e_Q \ \sim\ kC_G(PQ)\bar e_{PQ}.
\]
\hspace{\parindent}The next step uses descending induction on the order of the group $Q$ to prove that $M\sla Q = M_Q^0$. We know from the proof of Lemma \ref{lem:UnicM} that the slashed module $M\sla D$ is isomorphic to the indecomposable $kN(D)$-module $kC_G(D)e_D$, so $M\sla D=M_D^0$.
Then let $Q$ be a proper subgroup of $D$ and suppose that $M(R) = M_R^0$ for any $p$-group $R$ such that $Q<R\leqslant D$. We consider a Krull-Schmidt decomposition
\[
M\sla Q = M_Q^{0}\oplus\ldots\oplus M_Q^{n},
\]
of the $kN(Q)$-module $M\sla Q$, and we suppose that $n\geqslant 1$. Let $(R,f)$ be a vertex subpair of the $kN(Q)$-module $M_Q^1$. Once again, we may assume that the subpair $(Q,e_Q)$ is fully normalised in $(D,e_D)$. We may suppose that $(R,f)$ is contained in the maximal $(e_Q\otimes e_Q^\bop)$-subpair $((C_D(Q)\times C_D(Q))\Delta N_D(Q), e_{N_D(Q)}\otimes e_{N_D(Q)}^\bop)$.
By Lemma \ref{lem:SlashMod} (iii), the subpair $(R,f)$ must be contained in a $(G\times C_G(P))$-conjugate of the vertex subpair $(\Delta D,e_D\otimes e_D^\bop)$ of $M$. Thus we have $\Br_{(R,f)}(kGe)\neq 0$. 
Since the subpair $(\Delta Q,e_Q\otimes e_Q^\bop)$ is normalised by $(R,f)$ and $\Br_{(\Delta Q,e_Q\otimes e_Q^\bop)}(kG\bar e)\simeq kC_G(Q)\bar e_Q$, it follows that $\Br_{(R,f)}(kC_G(Q)\bar e_Q)\neq 0$. 
So the subpair $(R,f)$ is contained in a $(C_G(Q)\times C_G(Q))\Delta N_G(Q,e_Q)$-conjugate of the vertex subpair $(\Delta N_D(Q),e_{N_D(Q)}\otimes e_{N_D(Q)}^\bop)$ of the indecomposable $k(C_G(Q)\times C_G(Q))\Delta N_G(Q,e_Q)$-module $kC_G(Q)e_Q$. 
Moreover the subgroup $N(P,Q)$ controls the $(e_Q\otimes e_Q^\bop)$-fusion in the group $(C_G(Q)\times C_G(Q))\Delta N_G(Q,e_Q)$. So $(R,f)$ is contained in an $N(P,Q)$-conjugate of 
\linebreak
$(\Delta N_D(Q),e_{N_D(Q)}\otimes e_{N_D(Q)}^\bop)$. We may choose $(R,f)=(\Delta R',e_{R'}\otimes e_{PR'}^\bop)$ for some subgroup $R'$ of $N_D(Q)$.
If $Q<R'$, then we obtain
\[
M\sla{R'} \simeq M\sla Q\sla{R'} = M_Q^{0}\sla{R'}\oplus\ldots\oplus M_Q^{n}\sla{R'},
\]
where at least the direct summands $M_Q^0\sla{R'}$ and $M_Q^1\sla{R'}$ are non-zero. This contradicts the indecomposability of the $kC(R')$-module $M\sla{R'}= M_{R'}^0$. If $Q=R'$, then Lemma \ref{lem:DirSumdLift} implies that the $k(G\times C_G(P))$-module $M$ has an indecomposable direct summand with vertex subpair $(\Delta Q,e_Q\otimes e_{PQ}^\bop)$, another contradiction. So the lemma is proven.
\end{proof}

For the reader's convenience, we quote \cite[Theorem 1.1]{LinckelmannUnpublished}, which is not published yet. We slightly adapt the notations to fit those of the present chapter.

\begin{thm*}[Linckelmann]
Let $A$, $B$ be (almost) source algebras of blocks of finite group algebras over $\ringO$ having a common defect group $D$ and the same fusion system $\catf$ on $D$. Let $V$ be an $\catf$-stable indecomposable endo-permutation $\ringO D$-module with vertex $D$, viewed as an $\ringO \Delta D$-module through the canonical isomorphism $\Delta D\simeq D$. Let $M$ be an indecomposable direct summand of the $(A,B)$-bimodule
\[
A\otimes_{\ringO D} \Ind_{\Delta D}^{D\times D} V \otimes _{\ringO D} B
\]
Suppose that $M\otimes_B M^*\neq 0$. Then, for any non-trivial fully $\catf$-centralised subgroup $Q$ of $D$, there is a canonical $(\Br_Q(A),\Br_Q(B))$-bimodule $M\sla{\Delta Q}$ satisfying $\Endom_k(M\sla{\Delta Q}) \simeq \Br_{\Delta Q}(\Endom_\ringO(M))$. Moreover, if for all non-trivial fully $\catf$-centralised subgroups $Q$ of $D$ the bimodule $M\sla{\Delta Q}$ induces a Morita equivalence between $\Br_{\Delta Q}(A)$ and $\Br_{\Delta Q}(B)$, then $M$ and its dual $M^*$ induce a stable equivalence of Morita type between $A$ and $B$.
\end{thm*}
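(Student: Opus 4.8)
The plan is to verify directly the two bimodule isomorphisms that characterise a stable equivalence of Morita type, namely $M\otimes_B M^*\simeq A\oplus(\text{projective }(A,A)\text{-bimodule})$ and $M^*\otimes_A M\simeq B\oplus(\text{projective }(B,A)\text{-bimodule})$. Write $A$ and $B$ as (almost) source algebras of blocks of finite group algebras $\ringO G$ and $\ringO H$, so that $D$ embeds into $G$ and into $H$, and $\Br_Q(A)$, $\Br_Q(B)$ are (almost) source algebras of the corresponding Brauer correspondent blocks. The one-sided projectivity of $M$ and $M^*$ is automatic: source algebras are free over $\ringO D$ on each side, hence $A\otimes_{\ringO D}\Ind_{\Delta D}^{D\times D}V\otimes_{\ringO D}B$ — and therefore its summand $M$ — is projective as a left $A$-module and as a right $B$-module. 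Moreover $M^*$ is a direct summand of $B\otimes_{\ringO D}\Ind_{\Delta D}^{D\times D}V^*\otimes_{\ringO D}A$, and $V^*$ is again an $\catf$-stable indecomposable endo-permutation $\ringO D$-module with vertex $D$; so the second isomorphism will follow from the first by exchanging the roles of $(A,B,V,M)$ and $(B,A,V^*,M^*)$, and I will only prove the first.

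First I would record the local structure of $M$. By the Mackey formula, $A$ and $B$ are, as $(\ringO D,\ringO D)$-bimodules, direct sums of modules induced from twisted diagonal subgroups $\Delta_\phi(R)$ with $\phi$ a morphism of $\catf$; hence $M$ is relatively $\Delta D$-projective, its vertices are twisted diagonal subgroups, and its sources are twisted restrictions of $V$. Since the $(\Delta D)$-slash of $A\otimes_{\ringO D}\Ind_{\Delta D}^{D\times D}V\otimes_{\ringO D}B$ is one-dimensional, this bimodule has a unique indecomposable summand with vertex $\Delta D$, and the hypothesis $M\otimes_B M^*\neq 0$ forces $M$ (up to conjugacy) to be that summand, with source in the class of $V$; by Urfer's criterion \cite[Theorem~1.5]{Urfer2007} and the $\catf$-stability of $V$, $M$ is then an endo-$p$-permutation bimodule. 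Consequently $M\otimes_B M^*$ is a $p$-permutation $(A,A)$-bimodule: its restriction to a diagonal vertex is, up to projective summands, assembled from the permutation $\ringO D$-module $\Endom_\ringO(V)$, so each indecomposable summand has trivial source.

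The key construction is that of the slashed bimodules $M\sla{\Delta Q}$, for $Q$ a non-trivial fully $\catf$-centralised subgroup of $D$. Since $\Endom_\ringO(M)$ is a $p$-permutation $\ringO(G\times H)$-module, $\Br_{\Delta Q}(\Endom_\ringO(M))$ is a well-defined $k$-algebra with a $(C_G(Q)\times C_H(Q))$-interior structure. Using that $\Endom_\ringO(V)$ is a permutation module with a point at $Q$ (so $\Br_Q(\Endom_\ringO(V))$ is a matrix algebra over $k$), the (almost) source algebra structure of $A$ and $B$, and the slashed-module machinery of Section \ref{sec:BFmods}, I would show that $\Br_{\Delta Q}(\Endom_\ringO(M))\simeq\Endom_k(W)$ for a finite-dimensional $k$-vector space $W$ carrying a canonical $(\Br_Q(A),\Br_Q(B))$-bimodule structure compatible with the interior structures; set $M\sla{\Delta Q}=W$. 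As $M$ is indecomposable the algebra $\Endom_\ringO(M)$ is canonical, so $M\sla{\Delta Q}$ is well defined up to isomorphism and satisfies $\Endom_k(M\sla{\Delta Q})\simeq\Br_{\Delta Q}(\Endom_\ringO(M))$. The heart of the proof is then the compatibility of this construction with $-\otimes_B-$: functoriality of $\Br_{\Delta Q}$ on $p$-permutation modules, together with the description of $\Endom_\ringO(M\otimes_B M^*)$ in terms of $\Endom_\ringO(M)$ and $B$, yields a natural isomorphism
\[
\Br_{\Delta Q}\bigl(\Endom_\ringO(M\otimes_B M^*)\bigr)\ \simeq\ \Endom_k\bigl(M\sla{\Delta Q}\otimes_{\Br_Q(B)}(M\sla{\Delta Q})^*\bigr).
\]
The hypothesis that $M\sla{\Delta Q}$ is a Morita bimodule gives $M\sla{\Delta Q}\otimes_{\Br_Q(B)}(M\sla{\Delta Q})^*\simeq\Br_Q(A)$, whence $\Br_{\Delta Q}(\Endom_\ringO(M\otimes_B M^*))\simeq\Endom_k(\Br_Q(A))\simeq\Br_{\Delta Q}(\Endom_\ringO(A))$ for every non-trivial fully $\catf$-centralised $Q\leqslant D$.

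Finally I would conclude as follows. At $Q=D$ one has $\Br_D(A)\simeq k$, so the displayed isomorphism forces $\Br_{\Delta D}(M\otimes_B M^*)\neq 0$; since $A$ is, via Green correspondence over $N_{G\times H}(\Delta D)$, the unique indecomposable $p$-permutation $(A,A)$-bimodule with vertex $\Delta D$ and non-zero $(\Delta D)$-Brauer quotient, this forces $A$ to be a direct summand of $M\otimes_B M^*$. Writing $M\otimes_B M^*\simeq A\oplus N$, a Brauer-quotient count together with the displayed isomorphism forces $\Br_{\Delta Q}(\Endom_\ringO(N))=0$ for every non-trivial fully $\catf$-centralised $Q\leqslant D$, hence (by $\catf$-conjugation) for every non-trivial $Q\leqslant D$; as $N$ is a $p$-permutation bimodule, $\Br_{\Delta Q}(\Endom_\ringO(N))\simeq\Endom_k(\Br_{\Delta Q}(N))$, so $\Br_{\Delta Q}(N)=0$ for all non-trivial $Q\leqslant D$, and since every vertex of $N$ is a twisted diagonal $\catf$-conjugate into $\Delta D$, the module $N$ has trivial vertex, i.e.\ is projective as an $(A,A)$-bimodule. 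By the symmetry noted at the outset, $M^*\otimes_A M\simeq B\oplus(\text{projective})$ as well, and the theorem follows. The main obstacle is the slashed-bimodule construction of the third paragraph together with its functoriality in $-\otimes_B-$: unlike Rouquier's $p$-permutation setting \cite[Theorem~5.6]{Rouquier2001}, the bimodule $M$ has no Brauer quotient of its own, so one must work throughout with the $p$-permutation algebra $\Endom_\ringO(M)$ and carefully descend from the algebra-level isomorphism $\Br_{\Delta Q}(\Endom_\ringO(M))\simeq\Endom_k(W)$ to the bimodule $W$ over the local (almost) source algebras $\Br_Q(A)$ and $\Br_Q(B)$ — this is precisely what the ``almost source algebra'' formalism and the slashed-module theory are designed to make work.
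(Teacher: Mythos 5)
You should first note that the paper offers no proof of this statement to compare against: it is quoted verbatim from Linckelmann's unpublished work, and the present article only \emph{uses} it (in the proof of Theorem 1). Judged on its own, your outline follows the expected Brou\'e--Rouquier-type strategy (show $M\otimes_B M^*\simeq A\oplus\text{projective}$ by local analysis of the $p$-permutation bimodule $M\otimes_B M^*$), but it defers precisely the points where the theorem is genuinely hard. The existence of a \emph{canonical} bimodule $M\sla{\Delta Q}$ with $\Endom_k(M\sla{\Delta Q})\simeq\Br_{\Delta Q}(\Endom_\ringO(M))$ is not a routine consequence of the slash machinery: as Lemma \ref{lem:SlashMod}(ii) of this very paper records, a slashed module is a priori only well defined up to a twist by a linear character, and descending from the algebra-level isomorphism to an actual $(\Br_Q(A),\Br_Q(B))$-bimodule, compatibly and canonically, is exactly what the almost-source-algebra formalism has to deliver; you write ``I would show'' at this point. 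Likewise the displayed compatibility $\Br_{\Delta Q}(\Endom_\ringO(M\otimes_B M^*))\simeq\Endom_k(M\sla{\Delta Q}\otimes_{\Br_Q(B)}(M\sla{\Delta Q})^*)$, i.e.\ the commutation of the slash construction with $-\otimes_B-$, is the technical heart of Linckelmann's argument (the Brauer construction does not commute with tensor products over $B$ in general), and it is asserted, not proved. These are genuine gaps, not verifications left to the reader.

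There are also concrete inaccuracies in the final step. The claim $\Br_D(A)\simeq k$ is false in general: $\Br_D(A)$ is an (almost) source algebra of the block $e_D$ of $kC_G(D)$, which has defect group $Z(D)$, so it is Morita equivalent to $kZ(D)$; your intermediate conclusion $\Br_{\Delta D}(M\otimes_B M^*)\neq 0$ survives only because $\Br_D(A)\neq 0$, not for the reason given. The assertion that $A$ is ``the unique indecomposable $p$-permutation $(A,A)$-bimodule with vertex $\Delta D$ and non-zero $\Delta D$-Brauer quotient'' is wrong as stated (there are many such bimodules); what is actually needed is an identification of $\Br_{\Delta D}(M\otimes_B M^*)$ with $\Br_{\Delta D}(A)$ \emph{as bimodules}, which again rests on the unproved compatibility above. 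Finally, your vanishing argument only controls $\Br_{\Delta Q}(N)$ at untwisted diagonal subgroups, while the vertices of $N$ are twisted diagonals $\Delta_\phi(Q)$ with $\phi$ a morphism of $\catf$; one must still argue, using $\catf$-stability and the interior $D\times D$-structure of the (almost) source algebras, that non-vanishing at some $\Delta_\phi(Q)$ forces non-vanishing at an untwisted $\Delta Q'$ with $Q'$ fully centralised, before concluding that $N$ is projective. Until these points are supplied, the proposal is an outline of the known strategy rather than a proof.
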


We now have all the tools that we need to prove our main result.

\begin{proof}[Proof of Theorem \ref{thm:stable equivalence}.]
Let $i\in(\ringO Ge)^D$ be a source idempotent of the block $e$ such that $\bar e_D\br_D(i)\neq0$, and let $i_P\in(\ringO C_G(P)e_P)^D$ be a source idempotent of the block $e_P$ such that $\bar e_D\br_D(i_P)\neq0$. Set $A=i\ringO Gi$ and $B=i_P\ringO C_G(P)i_P$. Then $iMi_P$ is an indecomposable direct summand of the $(A,B)$-bimodule $A\otimes_{kD}\Ind_{\Delta D}^{D\times D} V\otimes_{kD} B$, where $V$ is an endo-permutation $\ringO D$-module that is fusion-stable for the common fusion system of the source algebras $A$ and $B$ on the defect group $D$. Moreover, by Lemma \ref{lem:global module local Morita}, the slashed module $iMi_P\sla{\Delta Q}$ induces a Morita equivalence $\Br_{\Delta Q}(A)\sim \Br_{\Delta Q}(B)$ for any subgroup $Q$ of the defect group $D$.
Then Linckelmann's theorem asserts that the $(A,B)$-bimodule $iMi_P$ induces a stable equivalence $A\sim B$. In terms of block algebras, this means exactly that the $(\ringO Ge,\ringO C_G(P)e_P)$-bimodule $M$ induces a stable equivalence
\[
\ringO Ge \ \sim \ \ringO C_G(P)e_P.
\]
\vspace{-10mm}

\end{proof}


\begin{thebibliography}{10}

\bibitem{AlperinBroue} 
\textsc{J.\,Alperin, M.\,Broué}, 
\emph{Local methods in block theory}.
Ann. of Math. 110 (1979), 143-157.

\bibitem{Artemovich} 
\textsc{O.D.\,Artemovich}, 
\emph{Isolated elements of prime order in finite groups}.
Ukrainian Math. J. 40 (1988), no. 3, 343-345.

\bibitem{AKO} 
\textsc{M.\,Aschbacher, R.\,Kessar, B.\,Oliver}, 
\emph{Fusion systems in algebra and topology}. 
London Mathematical Society Lecture Note Series, 391. Cambridge University Press, Cambridge, 2011. 

\bibitem{Biland2013} 
\textsc{E.\,Biland}, 
\emph{Brauer-friendly modules and slash functors}.
arXiv:1307.3924.

\bibitem{BoltjeKuelshammer} 
\textsc{R.\,Boltje, B.\,Külshammer}, 
\emph{The ring of modules with endo-permutation source}.
Manuscripta Math. 120 (2006), no. 4, 359-376.

\bibitem{Bouc2006}
\textsc{S.\,Bouc}, 
\emph{The Dade group of a $p$-group}.
Invent. math. 164(2006), 189-231.

\bibitem{BoucThevenaz}
\textsc{S.\,Bouc, J.\,Thévenaz}, 
\emph{Gluing torsion endo-permutation modules}.
J. Lond. Math. Soc. (2) 78 (2008), no. 2, 477-501.

\bibitem{BroueRouquier}
\textsc{M.\,Broué, R.\,Rouquier}, 
\emph{Introduction to Representation Theory of Finite Groups}.
Book in preparation. 

\bibitem{Dade1973}
\textsc{E.C.\,Dade}, 
\emph{Block extensions}. 
Illinois J. Math. 17 (1973), 198-272.

\bibitem{Dade1978}
\textsc{E.C.\,Dade}, 
\emph{Endo-permutation modules over $p$-groups I}.
Ann. of Math. 107 (1978), no. 3, 459-494.

\bibitem{Glauberman1966}
\textsc{G.\,Glauberman}, 
\emph{Central elements in core-free groups}.
J. Algebra 4, 1966, 403-420.

\bibitem{Classification3}
\textsc{D.\,Gorenstein, R.\,Lyons, R.\,Solomon},
\emph{The classification of the finite simple groups, Number 3, Part I.} American Mathematical Society, Providence, RI, 1998.

\bibitem{Harris2007}
\textsc{M.E.\,Harris}, 
\emph{Ordinary induction from a subgroup and finite group block theory}.
Osaka J. Math. 44 (2007), no. 1, 147-158.

\bibitem{KessarLinckelmann2002}
\textsc{R.\,Kessar, M.\,Linckelmann}, 
\emph{On perfect isometries for tame blocks}. 
Bull. London Math. Soc. 34 (2002), no. 1, 46-54.

\bibitem{KuelshammerRobinson}
\textsc{B.\,Külshammer, G.\, Robinson}, 
\emph{On blocks of finite groups with a certain factorization}.
Arch. Math. (Basel) 46 (1986), no. 2, 97-101.

\bibitem{LinckelmannUnpublished}
\textsc{M.\,Linckelmann}, 
\emph{On stable equivalences with endo-permutation sources}.
Unpublished.

\bibitem{Marcus1996}
\textsc{A.\,Marcus}, 
\emph{On equivalences between blocks of group algebras: reduction to the simple components}.
J. Algebra 184 (1996), no. 2, 372-396.

\bibitem{Puig1991}
\textsc{Ll.\,Puig}, 
\emph{Une correspondance de modules pour les blocs à groupes de défaut abéliens}.
Geom. Dedicata 37 (1991), no. 1, 9-43.

\bibitem{Puig2002}
\textsc{Ll.\,Puig}, 
\emph{Blocks of finite groups. The hyperfocal subalgebra of a block}.
Springer Monographs in Mathematics. Springer-Verlag, Berlin, 2002.

\bibitem{Robinson1986}
\textsc{G.R.\,Robinson}, 
\emph{On Cartan matrices of finite groups with a certain factorization}.
Representation theory, II (Ottawa, Ont., 1984), 152-176. Springer, Berlin, 1986.

\bibitem{Robinson2009}
\textsc{G.R.\,Robinson}, 
\emph{Central units in blocks and the odd $Z^*_p$-theorem}.
J.  Algebra 321 (2009), 384-393.

\bibitem{Rouquier2001}
\textsc{R.\,Rouquier}, 
\emph{Block theory via stable and Rickard equivalences}. 
Modular representation theory of finite groups (Charlottesville, VA, 1998), 101-146, de Gruyter, Berlin,~2001.

\bibitem{Sibley1990}
\textsc{D.\,Sibley}, 
\emph{Vertices, blocks, and virtual characters}. 
J. Algebra 132 (1990), no. 2, 501-507. 

\bibitem{Thevenaz1995}
\textsc{J.\,Thévenaz}, 
\emph{$G$-algebras and modular representation theory}. 
The Clarendon Press, Oxford University Press, New York, 1995.

\bibitem{Urfer2007}
\textsc{J.-M.\,Urfer}, 
\emph{Endo-p-permutation modules}. 
J. Algebra 316 (2007), no. 1, 206-223.

\end{thebibliography}
\end{document}